\documentclass[11pt]{amsart}

\setlength{\textwidth}{16cm} \setlength{\textheight}{21.5cm}
\setlength{\oddsidemargin}{0.0cm}
\setlength{\evensidemargin}{0.0cm}

\def\R{{\mathbb {R}}}
\def\N{{\mathbb {N}}}

\def\A{{\mathcal{A}}}
\def\PP{{\mathcal{P}}}

\def\ve{\varepsilon}

\def\sop{\operatorname {\text{supp}}}

\def\diam{\operatorname {\text{diam}}}
\def\div{\operatorname {\text{div}}}

\def\cp{\text{\rm{cap}}}

\newtheorem{teo}{Theorem}[section]
\newtheorem{lema}[teo]{Lemma}
\newtheorem{prop}[teo]{Proposition}
\newtheorem{corol}[teo]{Corollary}

\theoremstyle{remark}
\newtheorem{remark}[teo]{Remark}

\theoremstyle{definition}
\newtheorem{defi}[teo]{Definition}

\numberwithin{equation}{section}

\parskip 3pt
\begin{document}

\title{An extension of a Theorem of V. \v{S}ver\'ak to variable exponent spaces}

\author[C. Baroncini, J. Fern\'andez Bonder]{Carla Baroncini and Juli\'an Fern\'andez Bonder}

\address{IMAS - CONICET and Departamento de Matem\'atica, FCEyN - Universidad de Buenos Aires, Ciudad Universitaria, Pabell\'on I  (1428) Buenos Aires, Argentina.}

\email[J. Fernandez Bonder]{jfbonder@dm.uba.ar}

\urladdr[J. Fernandez Bonder]{http://mate.dm.uba.ar/~jfbonder}

\email[C. Baroncini]{cbaroncin@dm.uba.ar}

\subjclass[2010]{49Q10,49J45}

\keywords{Shape optimization, sensitivity analysis, nonstandard growth}

\begin{abstract}
In 1993, V. \v{S}ver\'ak proved that if a sequence of uniformly bounded domains $\Omega_n\subset \R^2$ such that $\Omega_n\to \Omega$ in the sense of the Hausdorff complementary topology, verify that the number of connected components of its complements are bounded, then the solutions of the Dirichlet problem for the Laplacian with source $f\in L^2(\R^2)$ converges to the solution of the limit domain with same source. In this paper, we extend \v{S}ver\'ak result to variable exponent spaces.
\end{abstract}

\maketitle

\section{Introduction}
One important problem in partial differential equations is the stability of solutions with respect to perturbations on the domain. This problem has fundamental applications in numerical computations of the solutions and is also fundamental in optimal shape design problems. See \cite{Allaire, Henrot, Pironneau} and references therein.

The famous example of Cioranescu and Murat \cite{C-M} shows that this problem presents severe difficulties when treated in full generality. In fact, in \cite{C-M} the authors take $D=[0,1]\times [0,1]\subset \R^2$ and define  the domains $\Omega_n = D\setminus \cup_{i,j=1}^{n-1} B_{r_n}(x_{i,j}^n)$ where the centers of the balls $x_{i,j}^n = (i/n, j/n)$, $1\le i,j\le n-1$ and the radius $r_n = n^{-2}$. Then these domains $\Omega_n$ converge to the empty set in the Hausdorff complementary topology, but if $u_n\in H^1_0(\Omega_n)$ is the solution to
$$
\begin{cases}
-\Delta u_n = f & \text{in }\Omega_n,\\
u_n=0 & \text{on }\partial\Omega_n,
\end{cases}
$$
then $u_n\rightharpoonup u^*$ weakly in $H^1_0(D)$ to the solution of
$$
\begin{cases}
-\Delta u^* + \frac{2}{\pi} u^* = f & \text{in }D,\\
u^*=0 & \text{on }\partial D.
\end{cases}
$$

This example can be generalized to other space dimensions, to different bounded sets $D$ and also to different types of {\em holes}. See the original work \cite{C-M} and also \cite{Tartar}.

There are some simple cases where the continuity can be granted. For instance, if $\Omega$ is convex and $\{\Omega_n\}_{n\in\N}$ is an increasing sequence of convex polygons such that $\Omega = \cup_{n\in \N}\Omega_n$, then the solutions of the approximating domains $\Omega_n$ converges to the one of $\Omega$. This fact can be traced back to the late 50's and the beginning of the 60's, see \cite{Babuska, Hong57, Hong58, Hong59}. Then, this result can be generalized in terms of the capacity of the symmetric differences of $\Omega$ and $\Omega_n$. See the book of Henrot, \cite{Henrot}.

In practical applications, when one does not have control on the sequence of approximating domains, this hypothesis is uncheckable,  so a different condition is needed. \v{S}ver\'ak  in \cite{Sverak} gave such a condition. In fact, given a bounded domain $D\subset \R^2$ and a sequence of domains $\Omega_n\subset D$ such that $\Omega_n\to \Omega$ in the sense of the Hausdorff complementary topology the condition that guaranty the convergence of the solutions in $\Omega_n$ to the one in $\Omega$ is that the number of connected components of $D\setminus\Omega_n$ be bounded. c.f. with the example of Cioranescu-Murat.

The reason why \v{S}ver\'ak's result holds in dimension 2 is because the capacity of curves in dimension 2 is positive, while in higher dimension curves have zero capacity.

\v{S}ver\'ak's result was later generalized to nonlinear elliptic equations of $p-$Laplace type. In fact, in \cite{Bucur}, the authors prove the continuity of the solutions of
$$
\begin{cases}
-\Delta_p u_n = f & \text{in }\Omega_n\subset \R^N, \\
u_n = 0 & \text{on }\partial\Omega_n,
\end{cases}
$$
when the domains $\Omega_n$ converges to $\Omega$ in the Hausdorff complementary topology under the assumption that the number of connected components of its complements remains bounded. The idea of the proof is similar to the original one of \v{S}ver\'ak and so they end up with the restriction $p>N-1$ that is needed for the curves to have positive $p-$capacity.

Recall that $\Delta_p u = \div(|\nabla u|^{p-2} \nabla u)$ is the so-called $p-$laplace operator.

In recent years a lot of attention have been put in nonlinear elliptic equations with nonstandard growth. One of the most representative of such equations is the so-called $p(x)-$laplacian, that is defined as $\Delta_{p(x)} u = \div(|\nabla u|^{p(x)-2}\nabla u)$. This operator  became very popular due to many new interesting applications, for instance in the mathematical modeling of electrorheological fluids (see \cite{Ru}) and also in image processing (see \cite{CLR}). Here, the exponent $p(x)$ is assumed to be measurable and bounded away from 1 and infinity.

So, the purpose of this paper is the extension of the result of \v{S}ver\'ak (and also the results of \cite{Bucur}) to the variable exponent setting.

\subsection*{Organization of the paper} The rest of the paper is organized as follows. In section 2 we collect some preliminaries on variable exponent spaces that are needed in this paper. The standard reference for this is the book \cite{Diening}. Some results are slight variations of the ones found in \cite{Diening} and in these cases we present full proofs of those facts (c.f. Theorem \ref{teocaracterizacion}).

In section 3, we study the Dirichlet problem for the $p(x)-$laplacian, the main result being the continuity of the solution with respect to the source. Although some of the results are well known, we decided to present the proofs of all of the results since we were unable to find a reference for these.

In section 4 we analyze the dependence of the solution of the Dirichlet problem for the $p(x)-$laplacian with respect to variations on the domain. Our two main theorems here are Theorem \ref{teo2dirichlet} where a capacity condition on the sequence of approximating domains is given in order for the continuity of solutions to hold, and Theorem \ref{teoindep} where it is shown that the continuity only depends on the approximating domains and not on the source term.

In section 5 after giving some capacity estimates that are needed in the remaining of the paper, collect all of our results and prove the main result of the paper, namely the extension of \v{S}ver\'ak's result to the variable exponent setting, i.e. Theorem \ref{teosverak}.

\section{Preliminaries}

\subsection{Definitions and well-known results}

Given $\Omega\subset \R^N$ a bounded open set, we consider the class of exponents $\PP(\Omega)$ given by
$$
\PP(\Omega) := \{p\colon \Omega\to [1,\infty)\colon p \text{ is measurable and bounded}\}.
$$

The variable exponent Lebesgue space $L^{p(x)}(\Omega)$ is defined by
$$
L^{p(x)}(\Omega):= \Big\{f\in L^1_{\text{loc}}(\Omega)\colon \rho_{p(x)}(f)<\infty\Big\},
$$
where the modular $\rho_{p(x)}$ is given by
$$
\rho_{p(x)}(f) := \int_{\Omega} |f|^{p(x)}\, dx.
$$
This space is endowed with the luxemburg norm
$$
\|f\|_{L^{p(x)}(\Omega)} = \|f\|_{p(x),\Omega} = \|f\|_{p(x)} := \sup\Big\{\lambda>0\colon \rho_{p(x)}(\tfrac{f}{\lambda})<1\Big\}. 
$$

The infimum and the supremum of the exponent $p$ play an important role in the estimates as the next elementary proposition shows. For further references, the following notation will be imposed
$$
1\le p_-:= \inf_{\Omega}p \le \sup_{\Omega} p =: p_+<\infty.
$$
The proof of the following proposition can be found in \cite[Theorem 1.3, p.p. 427]{FanyZhao}.

\begin{prop}\label{propdesigualdades}
Let $f\in L^{p(x)}(\Omega)$, then
$$
\min\{\|f\|_{p(x)}^{p_-}, \|f\|_{p(x)}^{p_+}\} \leq \rho_{p(x)}(f)\leq \max\{\|f\|_{p(x)}^{p_-}, \|f\|_{p(x)}^{p_{+}}\}.
$$
\end{prop}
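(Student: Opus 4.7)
The plan is to reduce the claim to the identity
\[
\rho_{p(x)}\!\left(\frac{f}{\|f\|_{p(x)}}\right)=1,
\]
valid for every nontrivial $f\in L^{p(x)}(\Omega)$ (the case $f\equiv 0$ being immediate, as both sides of the stated estimate vanish). This identity is a standard consequence of the definition of the Luxemburg norm combined with continuity of $\lambda\mapsto \rho_{p(x)}(f/\lambda)$, which in turn follows from $p_+<\infty$ via the dominated convergence theorem.

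Once the identity is in hand, set $\lambda:=\|f\|_{p(x)}$, so that
\[
\int_{\Omega}|f(x)|^{p(x)}\,\lambda^{-p(x)}\,dx=1,
\]
and split according to whether $\lambda\ge 1$ or $\lambda<1$. If $\lambda\ge 1$ one has $\lambda^{p_-}\le \lambda^{p(x)}\le \lambda^{p_+}$ pointwise in $\Omega$; replacing $\lambda^{p(x)}$ inside the integral by each of these uniform bounds produces $\lambda^{p_-}\le \rho_{p(x)}(f)\le \lambda^{p_+}$. If $\lambda<1$ the two pointwise bounds on $\lambda^{p(x)}$ are reversed, giving $\lambda^{p_+}\le \rho_{p(x)}(f)\le \lambda^{p_-}$. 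In both cases the resulting estimate is precisely
\[
\min\{\lambda^{p_-},\lambda^{p_+}\}\le \rho_{p(x)}(f)\le \max\{\lambda^{p_-},\lambda^{p_+}\},
\]
as claimed.

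The one step that is not purely mechanical is the identity $\rho_{p(x)}(f/\|f\|_{p(x)})=1$: the definition of the Luxemburg norm only yields the inequality $\rho_{p(x)}(f/\lambda)\le 1$ for $\lambda=\|f\|_{p(x)}$ directly, and the sharp equality relies on continuity of the modular, which is precisely where the standing assumption $p_+<\infty$ enters. Everything beyond that is the one-line case distinction on whether $\lambda$ lies above or below $1$.
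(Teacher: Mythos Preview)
Your argument is correct and is precisely the standard one: the paper does not give its own proof but refers to \cite[Theorem 1.3]{FanyZhao}, where exactly this case split on $\|f\|_{p(x)}\gtrless 1$ together with the unit-ball identity $\rho_{p(x)}(f/\|f\|_{p(x)})=1$ is carried out. Your remark that this identity (and hence the proof) uses $p_+<\infty$ through continuity of $\lambda\mapsto\rho_{p(x)}(f/\lambda)$ is the right observation.
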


\begin{remark}\label{minmax}
Proposition \ref{propdesigualdades}, is equivalent to
$$
\min\{\rho_{p(x)}(f)^{\frac{1}{p_-}}, \rho_{p(x)}(f)^{\frac{1}{p_{+}}} \} \leq \|f\|_{p(x)}\leq \max\{\rho_{p(x)}(f)^{\frac{1}{p_-}}, \rho_{p(x)}(f)^{\frac{1}{p_{+}}} \}.
$$
\end{remark}

We will use the following form of H\"older's inequality for variable exponents. The proof, which is an easy consequence of Young's inequality, can be found in \cite[Lemma 3.2.20]{Diening}.

\begin{prop}[H\"older's inequality]\label{propholder}
Assume $p_->1$. Let $u\in L^{p(x)}(\Omega)$ and $v\in L^{p'(x)}(\Omega)$, then
$$
\int_{\Omega} |u v|\, dx\leq 2\|u\|_{p(x)} \|v\|_{p'(x)},
$$
where $p'(x)$ is, as usual, the conjugate exponent, i.e. $p'(x):= p(x)/(p(x)-1)$.
\end{prop}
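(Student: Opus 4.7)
The plan is to normalize by the Luxemburg norms and then apply the classical (pointwise) Young inequality with the conjugate exponents $p(x)$ and $p'(x)$. Assume $u\not\equiv 0$ and $v\not\equiv 0$ (otherwise the inequality is trivial), set $\alpha := \|u\|_{p(x)}$, $\beta := \|v\|_{p'(x)}$, and define $\tilde u := u/\alpha$, $\tilde v := v/\beta$. By homogeneity of the norm one has $\|\tilde u\|_{p(x)} = 1 = \|\tilde v\|_{p'(x)}$, and the first key observation is that Proposition \ref{propdesigualdades} applied to $\tilde u$ then yields
\[
\rho_{p(x)}(\tilde u) \;\leq\; \max\{1,1\} \;=\; 1,
\]
and analogously $\rho_{p'(x)}(\tilde v) \leq 1$.

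Since $p_->1$ and $p$ is bounded, both $p(x)$ and $p'(x)=p(x)/(p(x)-1)$ lie in $(1,\infty)$ a.e., so the classical real-variable Young inequality applies pointwise:
\[
|\tilde u(x)\,\tilde v(x)| \;\leq\; \frac{|\tilde u(x)|^{p(x)}}{p(x)} + \frac{|\tilde v(x)|^{p'(x)}}{p'(x)}\qquad\text{for a.e. }x\in\Omega.
\]
Integrating over $\Omega$ and bounding the weights crudely by $1/p(x),\,1/p'(x) \leq 1$ (since $p(x),p'(x)\geq 1$) gives
\[
\int_\Omega |\tilde u\,\tilde v|\, dx \;\leq\; \rho_{p(x)}(\tilde u) + \rho_{p'(x)}(\tilde v) \;\leq\; 2,
\]
and multiplying through by $\alpha\beta$ produces the stated inequality.

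The only real conceptual point is the origin of the constant $2$. For constant exponents one keeps the weights $1/p$ and $1/p'$ in front of the two modular terms and uses $1/p+1/p'=1$ to conclude with the sharp constant $1$; with variable exponents these weights cannot be pulled out of the integrals, so the most direct argument bounds each of them individually by $1$, which is precisely where the factor $2$ in the statement comes from.
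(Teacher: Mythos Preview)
Your proof is correct and is precisely the standard normalization-plus-Young argument that the paper alludes to; the paper does not actually prove the proposition but simply remarks that it ``is an easy consequence of Young's inequality'' and cites \cite[Lemma 3.2.20]{Diening}. Your write-up makes that sketch explicit, including the point that the constant $2$ arises because the variable weights $1/p(x)$ and $1/p'(x)$ cannot be pulled outside the integrals.
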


The variable exponent Sobolev space $W^{1,p(x)}$ is defined by
$$
W^{1,p(x)}(\Omega):=\Big\{u\in W^{1,1}_{\text{loc}}(\Omega)\colon u\in L^{p(x)}(\Omega) \text{ and } \partial_i u\in L^{p(x)}(\Omega)\ i=1,\dots,N\Big\},
$$
where $\partial_i u$ stands fot the $i-$th partial weak derivative of $u$.

This space posses a natural modular given by
$$
\rho_{1,p(x)}(u) := \int_\Omega |u|^{p(x)} + |\nabla u|^{p(x)}\, dx,
$$
so $u\in W^{1,p(x)}(\Omega)$ if and only if $\rho_{1,p(x)}(u)<\infty$.

The corresponding luxenburg norm associated to this modular is
$$
\|u\|_{W^{1,p(x)}(\Omega)} = \|u\|_{1,p(x),\Omega} = \|u\|_{1,p(x)} := \sup\Big\{\lambda>0\colon \rho_{1,p(x)}(\tfrac{u}{\lambda})<1\Big\}. 
$$
Observe that this norm turns out to be equivalent to $\|u\|:= \|u\|_{p(x)} + \|\nabla u\|_{p(x)}$.

One important subspace of $W^{1,p(x)}(\Omega)$ is the functions with zero boundary values. This is the content of the next definition.
\begin{defi}
We define $W^{1,p(x)}_{0}(\Omega)$ as the closure in $W^{1,p(x)}(\Omega)$ of functions with compact support.
\end{defi}

In most applications is very helpful to have test functions to be dense in $W^{1,p(x)}_0(\Omega)$. It is well known, see \cite{Diening}, that this property fails in general, even for continuous exponents $p(x)$. In order to have this desired property one need to impose some regularity conditions on the exponent $p(x)$. 

\begin{defi}We say that $p \colon \Omega\rightarrow {\R}$ is {\em log-H\"older continuous} in $\Omega$ if 
\begin{equation}\label{logholder}
\sup_{\substack{x,y\in \Omega\\ x\neq y}}\log(|x-y|^{-1})|p(x)-p(y)| < \infty.
\end{equation}
Set $\PP^{log}(\Omega)=\{p \in \PP(\Omega) \colon p \text{ satisfies \eqref{logholder}}\}$.
\end{defi}

Under this condition, the following theorem holds,
\begin{teo}[Theorem 9.1.6 in \cite{Diening}]
Assume that $p \in \mathcal P^{log}(\Omega)$, then $C^{\infty}_{c}(\Omega)$ is dense in $W^{1,p(x)}_0(\Omega)$.
\end{teo}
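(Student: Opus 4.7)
The plan is a two-stage approximation: first reduce to $W^{1,p(x)}$-functions with compact support in $\Omega$, then smooth them by mollification. The first reduction is built into the definition of $W^{1,p(x)}_0(\Omega)$: given $u\in W^{1,p(x)}_0(\Omega)$ and $\delta>0$, pick $v\in W^{1,p(x)}(\Omega)$ with $K:=\sop v\Subset\Omega$ and $\|u-v\|_{1,p(x)}<\delta/2$. The real content is approximating such a $v$ by $C^\infty_c(\Omega)$-functions in the $W^{1,p(x)}$-norm.

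For this, I would extend $v$ by zero to all of $\R^N$ and extend $p$ to some $\tilde p\in\PP^{log}(\R^N)$; this is a standard McShane-type construction, feasible because $\Omega$ is bounded and \eqref{logholder} is merely a modulus-of-continuity condition. Let $\eta_\ve$ be the standard mollifier and set $v_\ve:=\eta_\ve*v$. For $\ve<\dist(K,\partial\Omega)$ one has $v_\ve\in C^\infty_c(\Omega)$ together with the classical distributional identity $\nabla v_\ve=\eta_\ve*\nabla v$. The theorem therefore reduces to the single statement
\begin{equation*}
\eta_\ve*f\to f \quad\text{in } L^{\tilde p(x)}(\R^N)\text{ as }\ve\to 0^+,\quad\text{for every }f\in L^{\tilde p(x)}(\R^N),
\end{equation*}
applied to both $f=v$ and $f=\partial_i v$ for $i=1,\dots,N$.

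I would establish this by a three-$\ve$ argument resting on two ingredients. The pointwise bound $|\eta_\ve*f(x)|\le C\,Mf(x)$ for the Hardy--Littlewood maximal operator $M$, combined with Diening's theorem that $M$ is bounded on $L^{\tilde p(x)}(\R^N)$ whenever $\tilde p\in\PP^{log}(\R^N)$, yields the uniform operator estimate $\|\eta_\ve*f\|_{\tilde p(x)}\le C\|f\|_{\tilde p(x)}$. On the dense subspace $C_c(\R^N)\subset L^{\tilde p(x)}(\R^N)$ (whose density relies on $p_+<\infty$), the convergence $\eta_\ve*g\to g$ is uniform on a fixed compact set, so the modular $\rho_{\tilde p(x)}(\eta_\ve*g-g)\to 0$ and hence by Remark~\ref{minmax} also $\|\eta_\ve*g-g\|_{\tilde p(x)}\to 0$. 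Splitting $\eta_\ve*f-f=\eta_\ve*(f-g)+(\eta_\ve*g-g)+(g-f)$ with $g$ chosen close to $f$ in $L^{\tilde p(x)}$ then closes the argument.

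The one genuinely nontrivial input, and the only point at which the log-H\"older hypothesis is really used, is the $L^{\tilde p(x)}$-boundedness of the maximal operator; this is Diening's celebrated maximal-function theorem. Without such regularity on $p$ the conclusion is known to fail, so the hypothesis cannot be dropped, which is exactly what makes this the main obstacle (and the reason the density statement is cited as an external input in the paper rather than re-proved in detail).
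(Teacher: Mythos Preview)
The paper does not supply its own proof of this statement: it is quoted verbatim as Theorem~9.1.6 of \cite{Diening} and used thereafter as a black box, so there is nothing in the paper to compare your argument against.

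That said, your outline is the standard route and is sound. The first reduction is indeed trivial given the paper's definition of $W^{1,p(x)}_0(\Omega)$ as the closure of compactly supported $W^{1,p(x)}$-functions. The extension of $p$ to $\tilde p\in\PP^{log}(\R^N)$ is unproblematic (McShane extension preserves the log-H\"older modulus on a bounded set), and your identification of the $L^{\tilde p(x)}$-convergence of mollifiers as the crux, resting on the maximal theorem, is exactly right. One small comment: the convergence $\eta_\ve*f\to f$ in $L^{\tilde p(x)}$ can also be proved without the full strength of the maximal-function theorem, by exploiting the log-H\"older condition directly via the ``key estimate'' $|B|^{p_B^--p_B^+}\le C$ for small balls $B$; this is closer to how \cite{Diening} organizes the argument, but your approach via the maximal operator is equally valid and arguably cleaner.
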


The proof of the following theorem can be found in \cite[Theorem 8.2.4]{Diening}.

\begin{teo}[Poincar\'e's inequality.]\label{teopoincare}
Let $p \in \PP^{log}(\Omega)$. Then there exists a constant $c>0$ such that
$$
\|u\|_{p(x)} \leq c \|\nabla u\|_{p(x)},\quad u\in W^{1,p(x)}_0(\Omega).
$$
\end{teo}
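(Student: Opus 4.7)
The plan is to reduce to smooth compactly supported functions via the density theorem stated just above, establish a pointwise estimate for such $u$ in terms of its gradient, and then apply a known $L^{p(x)}$-boundedness result for a classical integral operator.

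First, since $p\in\PP^{log}(\Omega)$, it suffices to prove the inequality for $u\in C^{\infty}_{c}(\Omega)$ and then pass to the limit in $W^{1,p(x)}_0(\Omega)$. Extend such a $u$ by zero to $\R^N$. Since $\Omega$ is bounded, fix $R>0$ with $\Omega\subset B_R(0)$; writing $x=(x_1,x')$ and integrating in the $x_1$ direction starting from a point where $u$ vanishes,
$$
u(x)=\int_{-R}^{x_1}\partial_1 u(t,x')\,dt,
$$
so that
$$
|u(x)|\leq \int_{-R}^{R}|\partial_1 u(t,x')|\,dt\leq 2R\,M(|\nabla u|)(x),
$$
where $M$ denotes the Hardy--Littlewood maximal operator. (Alternatively, one has the Riesz potential bound $|u(x)|\leq c\,I_1(|\nabla u|)(x)$ with $I_1(g)(x)=\int_{\R^N}|x-y|^{1-N}g(y)\,dy$, which is more flexible near the endpoint $p_-=1$.)

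Taking the $L^{p(x)}$ norm of the pointwise estimate and using the boundedness of $M$ on $L^{p(x)}(\R^N)$ (a central theorem of Diening, whose proof uses precisely the log-H\"older condition), we get
$$
\|u\|_{p(x)}\leq 2R\,\|M(|\nabla u|)\|_{p(x)}\leq C\,\|\nabla u\|_{p(x)},
$$
which is the desired inequality. Since the constant depends only on $R=\diam(\Omega)$, on $p_-,p_+$, and on the log-H\"older modulus of $p$, the bound extends to all of $W^{1,p(x)}_0(\Omega)$ by density.

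The main obstacle is the third step, namely the $L^{p(x)}$-boundedness of $M$: this is the technical heart of variable-exponent analysis and requires a delicate good-$\lambda$ argument, with the log-H\"older condition entering precisely to compare $|B|^{p(x)}$ and $|B|^{p(y)}$ for $x,y$ in a ball $B$. An additional subtlety is the case $p_-=1$, where $M$ is not bounded on $L^{p(x)}$; here one must bypass the maximal operator and use instead the $L^{p(x)}\to L^{p(x)}$ boundedness of $I_1$ on bounded domains (which holds under the same log-H\"older hypothesis via truncation into a ``local'' part controlled by $I_1$ itself on a bounded range, plus a decaying tail). Once that boundedness is in hand, the argument above goes through verbatim.
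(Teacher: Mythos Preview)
The paper does not give its own proof of this theorem; it simply cites \cite[Theorem 8.2.4]{Diening}. Your outline is in the spirit of the standard argument found there (pointwise Riesz-potential estimate for compactly supported smooth functions, followed by an $L^{p(x)}$ mapping theorem), so at the level of strategy there is nothing to compare.

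There is, however, a genuine gap in the displayed pointwise step. From
\[
|u(x)|\le \int_{-R}^{R}|\partial_1 u(t,x')|\,dt
\]
you cannot conclude $|u(x)|\le 2R\,M(|\nabla u|)(x)$ with the $N$-dimensional Hardy--Littlewood maximal function $M$: the left-hand side is a one-dimensional line integral and is controlled only by the \emph{one-dimensional} maximal function in the $x_1$ variable (equivalently, the strong maximal function over axis-parallel boxes), not by ball averages. A simple counterexample is $|\nabla u|$ concentrated on a thin slab $\{|x'|<\ve\}$: the line integral through the origin stays bounded below while $M(|\nabla u|)(0)\to 0$ as $\ve\to 0$. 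The $L^{p(x)}$ boundedness you invoke is Diening's theorem for the ball maximal function; for the one-dimensional/strong maximal function no such theorem is available in this generality.

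The fix is already in your parenthetical remark: use the Riesz-potential representation $|u(x)|\le c\,I_1(|\nabla u|)(x)$ (obtained by averaging the radial representation over $S^{N-1}$), and then either apply directly the $L^{p(x)}\to L^{p(x)}$ boundedness of $I_1$ on bounded domains, or use the elementary dyadic estimate
\[
I_1 g(x)=\int_{|y-x|\le 2R}\frac{g(y)}{|x-y|^{N-1}}\,dy\le C R\,M g(x)
\]
valid for $g\ge 0$ supported in $B_R$, and only then invoke Diening's maximal theorem. Two further points to tighten: (i) to apply the maximal theorem on $\R^N$ you must first extend $p$ from $\Omega$ to a log-H\"older exponent on $\R^N$ with the appropriate decay at infinity (e.g.\ constant outside a large ball); (ii) your treatment of the endpoint $p_-=1$ is too sketchy to stand as a proof---either restrict to $p_->1$ (which is all the paper ever uses) or cite precisely the bounded-domain $I_1$ estimate that covers it.
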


\begin{remark}
Thanks to Poincar\'e inequality, as usual, in $W^{1,p(x)}_0(\Omega)$ the following norm will be used,
$$
\|u\|_{W^{1,p(x)}_0(\Omega)} = \|\nabla u\|_{p(x)}.
$$
This norm, is equivalent to the usual norm in $W^{1,p(x)}(\Omega)$ for functions $u\in W^{1,p(x)}_0(\Omega)$.
\end{remark}

\begin{defi}
We denote by $W^{-1, p'(x)}(\Omega)$ the topological dual space of $W^{1,p(x)}_0(\Omega)$.

The duality product between $f\in W^{-1, p'(x)}(\Omega)$ and $u\in W^{1, p(x)}_0(\Omega)$ will be denoted, as usual, by $\langle f, u\rangle$.

The norm in this space will be denoted by
$$
\|f\|_{W^{-1,p'(x)}(\Omega)} = \|f\|_{-1,p'(x)} := \sup\{\langle f, u\rangle \colon u\in W^{1,p(x)}_0(\Omega),\ \|\nabla u\|_{p(x)}\le 1\}.
$$
\end{defi}

We now present a result which we will find most useful later. 
\begin{prop} \label{propdensidad}
The space $L^{\infty}(\Omega)$ is dense in $W^{-1,p'(x)}(\Omega)$.
\end{prop}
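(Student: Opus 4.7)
The plan is to factor the density through $L^{p'(x)}(\Omega)$: first show that the natural embedding $L^{p'(x)}(\Omega)\hookrightarrow W^{-1,p'(x)}(\Omega)$ is continuous and has dense range, and then show that $L^\infty(\Omega)$ is dense in $L^{p'(x)}(\Omega)$. Composing these two facts yields the conclusion.

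For the first step I would define $\iota\colon L^{p'(x)}(\Omega)\to W^{-1,p'(x)}(\Omega)$ by
$$
\langle \iota(g), u\rangle := \int_\Omega g u\, dx, \qquad u\in W^{1,p(x)}_0(\Omega).
$$
Continuity follows from H\"older's inequality (Proposition \ref{propholder}) and Poincar\'e's inequality (Theorem \ref{teopoincare}), giving an estimate of the form $\|\iota(g)\|_{-1,p'(x)}\le c\|g\|_{p'(x)}$. For density I would appeal to Hahn--Banach in combination with reflexivity of $W^{1,p(x)}_0(\Omega)$, which is inherited from reflexivity of $L^{p(x)}(\Omega)$ when $1<p_-\le p_+<\infty$ (a standard fact, see \cite{Diening}). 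If $\iota(L^{p'(x)}(\Omega))$ were not dense, there would exist a nonzero continuous linear functional on $W^{-1,p'(x)}(\Omega)$ vanishing on this image; by reflexivity such a functional is represented by some nonzero $u\in W^{1,p(x)}_0(\Omega)$ satisfying $\int_\Omega g u\, dx=0$ for every $g\in L^{p'(x)}(\Omega)$. Since $\Omega$ is bounded, $L^\infty(\Omega)\subset L^{p'(x)}(\Omega)$, so taking $g$ to be the characteristic function of an arbitrary measurable subset of $\Omega$ forces $u=0$ almost everywhere, a contradiction.

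For the second step, given $g\in L^{p'(x)}(\Omega)$ I would truncate by $g_k:=\max\{-k,\min\{g,k\}\}\in L^\infty(\Omega)$. Since $|g-g_k|^{p'(x)}\to 0$ pointwise almost everywhere and is dominated by $|g|^{p'(x)}\in L^1(\Omega)$, the dominated convergence theorem yields $\rho_{p'(x)}(g-g_k)\to 0$, and Remark \ref{minmax} then gives $\|g-g_k\|_{p'(x)}\to 0$. Combining with the continuity of $\iota$, this approximation passes to $W^{-1,p'(x)}(\Omega)$ and the proof is complete.

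The main obstacle is the density of $L^{p'(x)}(\Omega)$ in $W^{-1,p'(x)}(\Omega)$: the Hahn--Banach step rests on reflexivity of $W^{1,p(x)}_0(\Omega)$, which is not explicitly recorded in the preliminaries but is standard in the variable exponent setting under the running assumption $p_->1$. An alternative, if one wishes to avoid invoking reflexivity as a black box, is to establish it directly by observing that $u\mapsto (u,\nabla u)$ embeds $W^{1,p(x)}_0(\Omega)$ as a closed subspace of the reflexive product space $L^{p(x)}(\Omega)^{N+1}$.
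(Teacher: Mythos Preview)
Your argument is correct and takes a genuinely different route from the paper's. The paper proceeds by sandwiching between constant-exponent spaces: from the continuous dense embeddings $W^{1,p_+}_0(\Omega)\subset W^{1,p(x)}_0(\Omega)\subset W^{1,p_-}_0(\Omega)$ it passes to duals to obtain $W^{-1,(p_-)'}(\Omega)\subset W^{-1,p'(x)}(\Omega)$ with dense embedding, and then invokes the classical (constant-exponent) density of $L^\infty(\Omega)$ in $W^{-1,(p_-)'}(\Omega)$. Your approach, by contrast, stays entirely inside the variable-exponent framework, factoring through $L^{p'(x)}(\Omega)$ and using Hahn--Banach plus reflexivity for the density step and elementary truncation for $L^\infty(\Omega)\subset L^{p'(x)}(\Omega)$. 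The paper's route is shorter on the page but treats the constant-exponent density as a black box; note also that the density of the dual embedding $W^{-1,(p_-)'}(\Omega)\subset W^{-1,p'(x)}(\Omega)$ ultimately rests on reflexivity of $W^{1,p(x)}_0(\Omega)$ as well, though the paper does not make this explicit. Your route is more self-contained and makes that reliance transparent; the remark you add about realizing reflexivity via the closed embedding $u\mapsto(u,\nabla u)$ into $L^{p(x)}(\Omega)^{N+1}$ is the standard way to justify it and would close the loop cleanly.
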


\begin{proof} 
By H\"older's inequality we have that $W^{1,p_{+}}_{0}(\Omega)\subset W^{1,p(x)}_{0}(\Omega) \subset W^{1,p_-}_{0}(\Omega)$ with continuous embeddings. Since $C^\infty_c(\Omega)\subset W^{1,p_{+}}_{0}(\Omega)$ and $p\in \PP^{log}(\Omega)$ we have the embeddings are dense.
Therefore,
$$
W^{-1,(p_-)'}(\Omega) \subset W^{-1,p'(x)}(\Omega) \subset W^{-1,(p_{+})'}(\Omega),
$$
with dense embeddings. Finally, since $L^{\infty}(\Omega)$ is dense in $W^{-1,(p_{-})'}(\Omega)$, we have that $L^{\infty}(\Omega)$ is dense in $W^{-1,p'(x)}(\Omega)$.
\end{proof}

Analogous to the constant exponent case, we have the following characterization of $W^{-1, p'(x)}(\Omega)$.
\begin{prop}
Let $f\in W^{-1, p'(x)}(\Omega)$. Then, there exists $\{f_i\}_{i=0}^N\subset L^{p'(x)}(\Omega)$ such that
$$
\langle f, u\rangle = \int_\Omega f_0 u\, dx - \sum_{i=1}^N \int_{\Omega} f_i\partial_i u\, dx.
$$
We will then say that $f = f_0 + \sum_{i=1}^N \partial_i f_i$. Moreover, 
$$
\|f\|_{*} = \inf\left\{\sum_{i=0}^N \|f_i\|_{p'(x)}\colon f = f_0 + \sum_{i=1}^N \partial_i f_i,\ f_i\in L^{p'(x)}(\Omega), i=0,\dots,N\right\},
$$
defines an equivalent norm in $W^{-1,p'(x)}(\Omega)$.
\end{prop}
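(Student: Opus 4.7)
The plan is to mimic the classical (constant exponent) proof: realize $W^{1,p(x)}_{0}(\Omega)$ as a closed subspace of a product of variable-exponent Lebesgue spaces via the gradient map, then apply Hahn--Banach together with the identification of $(L^{p(x)}(\Omega))^{*}=L^{p'(x)}(\Omega)$ (valid since $p_{-}>1$, $p \in \PP^{log}(\Omega)$) to represent $f$ as a vector of $L^{p'(x)}$ functions, and finally re-read that representation as a distributional divergence plus a function.

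More concretely, first I would consider the product space $E = \prod_{i=0}^{N} L^{p(x)}(\Omega)$ normed by $\|(g_{0},\dots,g_{N})\|_{E}=\sum_{i=0}^{N}\|g_{i}\|_{p(x)}$, whose topological dual (by the known duality for variable-exponent Lebesgue spaces) is $\prod_{i=0}^{N} L^{p'(x)}(\Omega)$ with the max norm. Thanks to Poincar\'e's inequality (Theorem \ref{teopoincare}) the map $J\colon W^{1,p(x)}_{0}(\Omega)\to E$, $Ju=(u,\partial_{1}u,\dots,\partial_{N}u)$, is a linear homeomorphism onto a closed subspace $J(W^{1,p(x)}_{0}(\Omega))$, with the image norm equivalent to $\|\nabla u\|_{p(x)}$.

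Given $f\in W^{-1,p'(x)}(\Omega)$, define $\tilde f\colon J(W^{1,p(x)}_{0}(\Omega))\to\R$ by $\tilde f(Ju)=\langle f,u\rangle$, which is continuous with norm comparable to $\|f\|_{-1,p'(x)}$. The Hahn--Banach theorem extends $\tilde f$ to a continuous functional $F$ on $E$ of the same norm, and the duality identification yields $g_{0},\dots,g_{N}\in L^{p'(x)}(\Omega)$ with
\[
F(h_{0},\dots,h_{N})=\sum_{i=0}^{N}\int_{\Omega}g_{i}h_{i}\,dx,
\qquad \max_{i}\|g_{i}\|_{p'(x)}\le C\,\|f\|_{-1,p'(x)}.
\]
Setting $f_{0}:=g_{0}$ and $f_{i}:=-g_{i}$ for $i=1,\dots,N$, evaluation at $Ju$ gives exactly the stated representation $\langle f,u\rangle=\int_{\Omega}f_{0}u\,dx-\sum_{i=1}^{N}\int_{\Omega}f_{i}\partial_{i}u\,dx$, and the bound on the $g_{i}$ implies $\|f\|_{*}\le C\|f\|_{-1,p'(x)}$.

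For the equivalence of norms, the reverse inequality $\|f\|_{-1,p'(x)}\le C\|f\|_{*}$ is obtained directly from the representation: for any admissible decomposition and any $u$ with $\|\nabla u\|_{p(x)}\le 1$, H\"older's inequality (Proposition \ref{propholder}) together with Poincar\'e's inequality bounds $|\langle f,u\rangle|$ by a constant times $\sum_{i=0}^{N}\|f_{i}\|_{p'(x)}$; taking the infimum over decompositions and then the supremum over $u$ gives the claim. The main technical point to be careful about is that all norm comparisons go through the equivalence of $\|\nabla u\|_{p(x)}$ and $\|u\|_{p(x)}+\|\nabla u\|_{p(x)}$ on $W^{1,p(x)}_{0}(\Omega)$ (hence the hypothesis $p\in\PP^{log}(\Omega)$ is used implicitly through Poincar\'e), and that the duality $(L^{p(x)})^{*}=L^{p'(x)}$ requires $p_{-}>1$; once these are in place, the argument is the standard Hahn--Banach-plus-Riesz scheme and no serious obstacle remains.
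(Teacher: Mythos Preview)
Your proposal is correct and follows the standard scheme the paper alludes to when it says the characterization ``follows exactly as in the constant exponent case.'' In fact, your write-up is more explicit than the paper's own proof: the paper does not spell out the Hahn--Banach/Riesz construction at all, and for the norm equivalence it establishes only the inequality $\|f\|_{-1,p'(x)}\le C\|f\|_{*}$ directly (via H\"older and Poincar\'e, exactly as you do) and then obtains the reverse inequality by invoking the Open Mapping Theorem.

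The one genuine difference is therefore in how the bound $\|f\|_{*}\le C\|f\|_{-1,p'(x)}$ is obtained. You get it constructively, as a byproduct of the norm-preserving Hahn--Banach extension: the representing functions $g_{i}$ satisfy $\max_{i}\|g_{i}\|_{p'(x)}\le C\|f\|_{-1,p'(x)}$, which immediately bounds $\|f\|_{*}$. The paper instead appeals to the Open Mapping Theorem, which is shorter to state but implicitly requires knowing that $(W^{-1,p'(x)}(\Omega),\|\cdot\|_{*})$ is complete---a fact that is true (it is a quotient norm coming from $(L^{p'(x)})^{N+1}$) but not verified in the text. Your route avoids this issue entirely and is arguably cleaner.
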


\begin{proof}
The characterization of $W^{-1,p'(x)}(\Omega)$ follows exactly as in the constant exponent case. It remains to see the equivalence of the norms $\|\cdot\|_{-1,p'(x)}$ and $\|\cdot\|_*$.

Observe that $\|\cdot\|_{*}$ clearly defines a norm in $W^{-1,p'(x)}(\Omega)$.

Let us now take $f_0, f_{1},\dots,f_{n}\in L^{p'(x)}(\Omega)$ such that $f= f_0 + \sum_{i=1}^N \partial_i f_i$ and consider $v \in W_{0}^{1,p(x)}(\Omega)$ such that $\|\nabla v\|_{p(x)}=1$. 

By H\"older's inequality (Proposition \ref{propholder}) and Poincar\'e's inequality (Theorem \ref{teopoincare}), we have 
\begin{align*}
\langle f,v \rangle &=\int_{\Omega}\left(f_{0}v+\sum_{i=1}^{N} f_{i}\partial_{i}v\right)\, dx\\
& \leq 2 \|f_0\|_{p'(x)}\|v\|_{p(x)} + 2 \sum_{i=1}^N \|f_i\|_{p'(x)}\|\partial_i v\|_{p(x)}\\
&\leq C \left(\|f_0\|_{p'(x)}+\sum_{i=1}^N \|f_i\|_{p'(x)}\right).
\end{align*}
Therefore,
$$
\|f\|_{-1,p'(x)}=\inf_{\|\nabla v\|_{p(x)}=1}\langle f,v \rangle \leq  C \left(\|f_0\|_{p'(x)}+\sum_{i=1}^N \|f_i\|_{p'(x)}\right),
$$
so 
$$
\|f\|_{-1,p'(x)}\le C\|f\|_*
$$
Now, the reverse inequality is a direct consequence of the Open Mapping Theorem (cf. \cite{Brezis}).
\end{proof}

\begin{remark}
Let now $D\subset \R^N$ be a bounded, open set and let $\Omega\subset D$ be open. Then, we have that $W^{1,p(x)}_0(\Omega)\subset W^{1,p(x)}_0(D)$, the inclusion being canonical, extending by zero. This inclusion induces $W^{-1, p'(x)}(D)\subset W^{-1,p'(x)}(\Omega)$ by restriction. Therefore, when dealing with sets $\Omega$ that are subsets of $D$, if one is considering $f\in W^{-1,p'(x)}(D)$ and $u\in W^{1,p(x)}_0(\Omega)$ there is no ambiguity in the notation $\langle f, u\rangle$.
\end{remark}

\subsection{$p(x)$-capacity and pointwise properties of Sobolev functions} We need the concept of capacity modified to deal with pointwise properties of functions in $W^{1,p(x)}_0(\Omega)$. This is the concept of $p(x)-$capacity. See \cite[Chapter 10]{Diening}. 

\begin{defi}
Given $E\subset \mathbb{R}^{N}$, we consider the set
$$
S_{p(x)}(E)=\left\{u\in W^{1,p(x)}(\mathbb{R}^{N})\colon u\geq 0 \text{ and } u\geq 1 \text{ in an open set containing } E \right\}.
$$
If $S_{p(x)}(E)\neq \emptyset$, we define {\em $p(x)-$Sobolev capacity} of E as follows 
$$
\cp_{p(x)}(E)=\inf_{u\in S_{p(x)}(E)}\int_{\mathbb{R}^N} |u|^{p(x)}+|\nabla u|^{p(x)}dx = \inf_{u\in S_{p(x)}(E)} \rho_{1,p(x)}(u).
$$
If $S_{p(x)}(E)=\emptyset$, we set $\cp_{p(x)}(E)=\infty$. 
\end{defi}

\begin{defi}
Let $p \in \PP^{log}(\Omega)$ and $K \subset \Omega$ compact, we define the \textit{$p(x)-$relative capacity} as
$$
\cp^{*}_{p(x)}(K,\Omega)=\inf_{u \in R_{p(x)}(K,\Omega)} \rho_{p(x),\Omega}(\nabla u)
$$
where $R_{p(x)}(K,\Omega)=\{u \in W^{1,p(x)}_0(\Omega)\colon u>1 \text{ in } K \text{ and } u\geq 0\}$.

If $U \subset \Omega$ is an open set, we define $\cp_{p(x)}(U,\Omega)=\displaystyle{\sup_{\substack {K \subset U\\ K \text{ compact}}} \cp^{*}_{p(x)}(K,\Omega)}$.

Finally, if $E \subset \Omega$ is arbitrary, we define the $p(x)-$ relative capacity of $E$ with respect to $\Omega$ as
$$
\cp_{p(x)}(E,\Omega)=\inf_{\substack{E \subset U \subset \Omega\\ U \text{ open}}} \cp_{p(x)}(U,\Omega).
$$
\end{defi}

The main advantage of the relativa capacity is the fact that is possible to obtain a {\em capacitary potential}, i.e. a function whose modular gives the capacity of a set.

To this end, let $A\subset D$ and consider the class
$$
\Gamma_{A}=\overline{\left\{v\in W_{0}^{1,p(x)}(D) \colon v\geq 1 \text{ a.e. in an open set containing } A\right\}},
$$
the closure being taken in $W^{1,p(x)}_0(D)$.

\begin{remark}\label{wclosed}
Observe that since $\Gamma_{A}\subset W_{0}^{1,p(x)}(D)$ is closed and convex (the closure of a convex set is convex), it follows that is weakly convex. This fact will be used in the next proposition.
\end{remark}

Now we show that the relative capacity of a set is realized by a function in $\Gamma_A$.
\begin{prop}
If $\Gamma_{A}\neq \emptyset$, then there exists a unique $u_{A}\in \Gamma_{A}$ such that
$$
\cp_{p(x)}(A,D)=\int_{D}\left|\nabla u_{A}\right|^{p(x)}dx.
$$
\end{prop}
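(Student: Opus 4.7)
My plan is to apply the direct method of the calculus of variations to the functional $J(u) := \int_{D} |\nabla u|^{p(x)}\,dx$ restricted to the admissible class $\Gamma_{A}$. Set $\alpha := \inf\{J(u) : u \in \Gamma_{A}\}$; the goal is to realize $\alpha$ as $J(u_{A})$ for a unique $u_{A}\in \Gamma_{A}$, and then match $\alpha$ with the relative capacity.

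First, I pick a minimizing sequence $\{u_{n}\}\subset \Gamma_{A}$ with $J(u_{n})\to \alpha$. Applying Remark \ref{minmax} to $\nabla u_{n}\in L^{p(x)}(D)$ gives a uniform bound on $\|\nabla u_{n}\|_{p(x)}$, and then Poincar\'e's inequality (Theorem \ref{teopoincare}), together with the equivalent norm $\|\cdot\|_{W^{1,p(x)}_{0}(D)} = \|\nabla\cdot\|_{p(x)}$, gives a uniform bound on $\{u_{n}\}$ in $W^{1,p(x)}_{0}(D)$. Since $L^{p(x)}(D)$ is reflexive when $1<p_{-}\le p_{+}<\infty$, so is $W^{1,p(x)}_{0}(D)$, and along a subsequence (not relabeled) $u_{n}\rightharpoonup u_{A}$ weakly in $W^{1,p(x)}_{0}(D)$. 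By Remark \ref{wclosed} the set $\Gamma_{A}$ is weakly closed, hence $u_{A}\in \Gamma_{A}$.

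The functional $J$ is convex (because $\xi \mapsto |\xi|^{p(x)}$ is convex pointwise) and strongly continuous on $W^{1,p(x)}_{0}(D)$, so by Mazur's lemma it is weakly sequentially lower semicontinuous, giving $J(u_{A})\le \liminf_{n} J(u_{n}) = \alpha$. Combined with $u_{A}\in \Gamma_{A}$, this forces $J(u_{A}) = \alpha$. For uniqueness I invoke strict convexity: assuming $p_{-}>1$, $\xi \mapsto |\xi|^{p(x)}$ is strictly convex, so if $u_{A}$ and $\tilde u_{A}$ were two distinct minimizers their midpoint would lie in $\Gamma_{A}$ (convexity is preserved by closure) and have strictly smaller energy, unless $\nabla u_{A}=\nabla \tilde u_{A}$ a.e.; Poincar\'e's inequality then yields $u_{A}=\tilde u_{A}$.

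Finally, to identify $\alpha$ with $\cp_{p(x)}(A,D)$, I compare admissible classes: any competitor in $R_{p(x)}(K,D)$ associated with a compact $K$ sitting inside an open neighborhood of $A$ is (after truncation below $1$ and above by the constant $1$) comparable to an element of $\Gamma_{A}$, and conversely elements of $\Gamma_{A}$ can be strongly approximated by functions which are $\ge 1$ on open neighborhoods of $A$. The main obstacle I foresee is not the variational existence argument, which is routine once reflexivity and strict convexity of the $L^{p(x)}$-modular are granted, but this last matching of admissible classes, since $\cp_{p(x)}(A,D)$ is defined through a two-step outer regularization by open sets and inner approximation by compact subsets, and one must interchange these operations carefully while exploiting the closure in the definition of $\Gamma_{A}$.
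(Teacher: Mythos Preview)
Your argument is correct and follows essentially the same route as the paper: both proofs run the direct method on $J(u)=\int_D|\nabla u|^{p(x)}\,dx$ over $\Gamma_A$, obtain boundedness from Remark~\ref{minmax} and Poincar\'e, extract a weak limit by reflexivity, use the weak closedness of $\Gamma_A$ (Remark~\ref{wclosed}) and weak lower semicontinuity of the modular to conclude existence, and appeal to strict convexity (with $p_->1$) for uniqueness.

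The one place you are more careful than the paper is the identification $\inf_{\Gamma_A}J=\cp_{p(x)}(A,D)$. The paper simply \emph{assumes} this at the outset, taking a minimizing sequence $v_n\ge 1$ on open neighborhoods of $A$ with $J(v_n)\to\cp_{p(x)}(A,D)$, without verifying that the three-step (compact/open/arbitrary) definition of relative capacity agrees with the $\Gamma_A$-infimum. You correctly flag this as the nontrivial step; it is indeed a known equivalence (see \cite[Chapter 10]{Diening}) rather than something proved here, and your concern about interchanging the inner/outer regularizations is well placed. So your proof is at least as complete as the paper's, and arguably clearer about where an external reference is being invoked.
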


\begin{proof}
Consider $\{v_n\}_{n\in\N}\subset W_{0}^{1,p(x)}(D)$ such that $v_{n}\geq 1$ a.e. in an open set containing $A$ and 
$$
\int_{D}\left|\nabla v_{n}\right|^{p(x)}\, dx \to \cp_{p(x)}(A,D).
$$

By Theorem \ref{teopoincare} and Proposition \ref{propdesigualdades}, we have
$$
\|\nabla v_{n}\|_{p(x)}\leq \max \{\rho_{p(x)}(\nabla v_{n})^{\frac{1}{p_+}}, \rho_{p(x)}(\nabla v_{n})^{\frac{1}{p_-}}\}.
$$
Then, $\{v_n\}_{n\in\N}$ is bounded in $W_{0}^{1,p(x)}(D)$, which is a reflexive space. By Alaoglu's Theorem, there is a subsequence $v_{n_{j}}\rightharpoonup v_{\infty}$ en $W_{0}^{1,p(x)}(D)$. By Remark \ref{wclosed}, $v_{\infty}\in \Gamma_{A}$.

Observe that
$$
\int_{D}\left|\nabla v_{\infty}\right|^{p(x)}dx\leq \liminf \int_{D}\left|\nabla v_{n_{j}}\right|^{p(x)}dx=\cp_{p(x)}(A,D).
$$
Since the reverse inequality is obvious, the first part of the Proposition is proved.

The uniqueness is an immediate consequence of the strict convexity of the modular, since $p_->1$. We leave the details to the reader.
\end{proof}

We can now give the definition of capacitary potential.
\begin{defi}
We define the capacitary potential of $A$ such as the only $u_{A}$ that verifies
$$
\int_{D}\left|\nabla u_{A}\right|^{p(x)}dx=\inf_{ v \in \Gamma_{A}} \int_{D}\left|\nabla v\right|^{p(x)}dx = \cp_{p(x)}(A,D).
$$ 
\end{defi}

It is well known that when dealing with pointwise properties of Sobolev functions, the concept of {\em almost everywhere} needs to be changed to {\em quasi everywhere}. This is the content of the next definition.

\begin{defi}
An statement is valid {\em $p(x)-$quasi everywhere} ($p(x)-$q.e.) if it is valid except  in a set of null Sobolev $p(x)-$capacity.
\end{defi}

\begin{defi}
Let $D \subset {\R}^{N}$ be an open bounded set, $\Omega \subset D$ is {\em $p(x)-$quasi open} if there is a decreasing sequence $\{W_{n}\}_{n\in\N}$ of open sets such that $\cp_{p(x)}(W_{n},D)$ converges to $0$ and $\Omega\cup W_{n}$ is an open set for each $n$.
\end{defi}

\begin{defi}
A function $u \colon \Omega \rightarrow {\R}$ is {\em $p(x)-$quasi continuous} if for every $\ve>0$, there is an open set $U$ such that $\cp_{p(x)}(U)<\ve$ and $u|_{\Omega \setminus U}$ is continuous.
\end{defi}

The proof of the next theorem can be found in \cite[Corollary 11.1.5]{Diening}.

\begin{teo}\label{teolog}
Let $p \in \PP^{log} (\Omega)$ with $1<p_-\leq p_+ < \infty$. Then for each $u \in W^{1,p(x)}(\Omega)$ there exists a $p(x)-$quasicontinuous function $v \in W^{1,p(x)}(\Omega)$ such that $u=v$ almost everywhere in $\Omega$.
\end{teo}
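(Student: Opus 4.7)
The strategy is to construct $v$ as the limit of a rapidly converging sequence of smooth approximations to $u$, with uniform convergence outside sets of arbitrarily small $p(x)$-capacity. Since $p\in\PP^{log}(\Omega)$, smooth functions are dense in $W^{1,p(x)}(\Omega)$; I would pick $\{u_k\}\subset C^\infty(\Omega)\cap W^{1,p(x)}(\Omega)$ satisfying the fast rate $\rho_{1,p(x)}(u_{k+1}-u_k)\le 2^{-k(p_++1)}$.

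The key tool is a capacitary weak-type inequality: for nonnegative continuous $\vp\in W^{1,p(x)}(\R^N)$ and $\lambda>0$, the set $\{\vp>\lambda\}$ is open and $\vp/\lambda$ belongs to $S_{p(x)}(\{\vp>\lambda\})$; hence, using the homogeneity bounds of the modular (Remark \ref{minmax}),
$$
\cp_{p(x)}(\{\vp>\lambda\})\;\le\;\rho_{1,p(x)}\!\left(\tfrac{\vp}{\lambda}\right)\;\le\;\max(\lambda^{-p_-},\lambda^{-p_+})\,\rho_{1,p(x)}(\vp).
$$
I would fix a compact $K\subset\Omega$ together with a smooth cutoff $\eta\in C^{\infty}_c(\Omega)$ with $\eta\equiv 1$ on $K$, and apply the inequality to $\eta|u_{k+1}-u_k|$ (nonnegative, continuous, extending by zero to an element of $W^{1,p(x)}(\R^N)$) with $\lambda=2^{-k}$. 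Writing $A_k:=\{|u_{k+1}-u_k|>2^{-k}\}$ and $V_j:=\bigcup_{k\ge j}A_k$, this gives $\cp_{p(x)}(A_k\cap K)\le C_K\,2^{-k}$, and by countable subadditivity $\cp_{p(x)}(V_j\cap K)\to 0$ as $j\to\infty$.

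Each $A_k$ is open by the continuity of $u_{k+1}-u_k$, so $V_j$ is open; and on $\Omega\setminus V_j$ the bound $|u_{k+1}-u_k|\le 2^{-k}$ holds for $k\ge j$, so $\{u_k\}$ is uniformly Cauchy there and converges to a continuous function. I would then define $v(x):=\lim_k u_k(x)$ wherever this limit exists (in particular on $\Omega\setminus\bigcap_j V_j$, which differs from $\Omega$ only by a set of $p(x)$-capacity zero) and extend by zero on the remaining null set. By construction $v$ is $p(x)$-quasicontinuous on $K$, and since $u_k\to u$ in $L^{p(x)}(\Omega)$ yields a.e. convergence along a subsequence, $v=u$ a.e. An exhaustion of $\Omega$ by compacts $K_m$ then delivers quasi-continuity on all of $\Omega$.

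The main obstacle I anticipate is reconciling the ambient settings: the Sobolev $p(x)$-capacity in the definition uses test functions on $\R^N$, whereas the approximations live on $\Omega$, so the cutoff-plus-exhaustion procedure has to be tracked carefully to ensure that the local capacity estimates assemble into a genuine quasi-continuity statement on $\Omega$ and that the constants $C_K$ do not blow up in the limit. With that localization handled, the remaining steps---rate selection in the approximation, the weak-type estimate, and a Borel--Cantelli-style summation---are essentially routine.
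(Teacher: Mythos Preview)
The paper does not prove this theorem; it merely states it and refers to \cite[Corollary 11.1.5]{Diening} for the proof. Your outline is precisely the standard argument found there (and, for constant exponents, in the classical potential-theory literature): choose smooth approximants $u_k$ with geometrically fast convergence in modular, apply a capacitary Chebyshev-type inequality to the increments, and run a Borel--Cantelli summation to obtain uniform convergence off open sets of arbitrarily small capacity.

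The localization obstacle you anticipate is genuine but not serious, and the exhaustion you propose resolves it. After extending $p$ to a log-H\"older exponent on $\R^N$ (so that the Sobolev capacity is defined), take compacta $K_m\nearrow\Omega$ with associated cutoffs $\eta_m$; the weak-type bound yields open sets $W_k^m:=\{\eta_m|u_{k+1}-u_k|>2^{-k}\}\supset A_k\cap K_m$ with $\cp_{p(x)}(W_k^m)\le C_m\,2^{-k}$. The constants $C_m$ need not stay bounded in $m$: given $\ve>0$, choose for each $m$ an index $j_m$ with $\sum_{k\ge j_m}C_m\,2^{-k}<\ve\,2^{-m}$, and set $U=\bigcup_m\bigcup_{k\ge j_m}W_k^m$. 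Then $U$ is open with $\cp_{p(x)}(U)<\ve$, and on each $K_m\setminus U$ the tail $(u_k)_{k\ge j_m}$ is uniformly Cauchy, so the limit $v$ is continuous on $\Omega\setminus U$. Hence your sketch is correct and coincides with the approach of the cited reference.
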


\begin{remark}
It is easy to see that two $p(x)-$quasi continuous representatives of a given function $u\in W^{1,p(x)}(\Omega)$ can only differ in a set of zero $p(x)-$capacity. Therefore, the unique $p(x)-$quasi continuous representative (defined $p(x)-$q.e.) of $u\in W^{1,p(x)}(\Omega)$ will be denoted by $\tilde u$.
\end{remark}

The proof of the next proposition can be found in \cite[Section 11.1.11]{Diening}.

\begin{prop} \label{propctp}
Let $v_{j}\rightarrow v$ in $W^{1,p(x)}_{0}(D)$. Then, there is a subsequence $\{v_{j_{k}}\}_{k \in \N}$ such that $\tilde{v}_{j_{k}}\rightarrow \tilde{v}$ $p(x)-$q.e.
\end{prop}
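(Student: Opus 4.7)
The plan is to adapt the classical subsequence/level set argument from the constant exponent theory. First, since $v_j \to v$ strongly in $W^{1,p(x)}_0(D)$, I would extract a fast-converging subsequence, still denoted $\{v_{j_k}\}$, with
$$
\|v_{j_k}-v\|_{W^{1,p(x)}_0(D)}\le 2^{-k}.
$$
Setting $w_k := v_{j_k}-v$, note that, by Theorem \ref{teolog} and uniqueness of quasi-continuous representatives up to sets of $p(x)$-capacity zero, $\tilde w_k = \tilde v_{j_k}-\tilde v$ holds $p(x)$-q.e.

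Next, I would pick a threshold sequence $\lambda_k \to 0$ (for instance $\lambda_k := 2^{-k/(2p_+)}$) and consider the level sets
$$
E_k := \{x\in D \colon |\tilde w_k(x)|>\lambda_k\}.
$$
Since $\tilde w_k$ is $p(x)$-quasi continuous, the set $E_k$ is $p(x)$-quasi open, and $|\tilde w_k|/\lambda_k \ge 1$ on $E_k$ in the quasi-continuous sense. By approximating $E_k$ from outside by open sets (up to arbitrarily small capacity) and testing with (a truncation of) $|w_k|/\lambda_k \in W^{1,p(x)}(\R^N)$, one obtains
$$
\cp_{p(x)}(E_k)\le \rho_{1,p(x)}\bigl(w_k/\lambda_k\bigr).
$$
Combining this with Proposition \ref{propdesigualdades} and the bound $\|w_k\|_{1,p(x)}\le 2^{-k}$, the choice of $\lambda_k$ yields, for $k$ large,
$$
\cp_{p(x)}(E_k)\le \max\Bigl\{(2^{-k}/\lambda_k)^{p_-},(2^{-k}/\lambda_k)^{p_+}\Bigr\}\le C\, 2^{-k/2},
$$
so that $\sum_k \cp_{p(x)}(E_k)<\infty$.

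Finally, I would apply a Borel--Cantelli argument for capacity: by countable subadditivity of $\cp_{p(x)}$ on the Borel sets (see Chapter 10 of \cite{Diening}), the tail $F_m:=\bigcup_{k\ge m}E_k$ satisfies $\cp_{p(x)}(F_m)\le \sum_{k\ge m}\cp_{p(x)}(E_k)\to 0$. Hence $F:=\bigcap_m F_m$ has $p(x)$-capacity zero, and for every $x\in D\setminus F$ there exists $m$ with $|\tilde w_k(x)|\le \lambda_k$ for all $k\ge m$, i.e.\ $\tilde v_{j_k}(x)\to \tilde v(x)$. Together with the capacity-zero exceptional set on which $\tilde w_k\ne \tilde v_{j_k}-\tilde v$, this gives $\tilde v_{j_k}\to \tilde v$ $p(x)$-q.e.

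The main obstacle I anticipate is the capacity estimate $\cp_{p(x)}(E_k)\le \rho_{1,p(x)}(w_k/\lambda_k)$: the level set $E_k$ is only quasi-open, whereas the definition of $\cp_{p(x)}$ requires an \emph{open} superset on which the test function is $\ge 1$. The standard workaround is to replace $\tilde w_k$ by a slightly enlarged open set $U_k\supset E_k$ with $\cp_{p(x)}(U_k\setminus E_k)$ arbitrarily small, using quasi-continuity, and then pass to the limit; this is precisely the kind of technicality carried out in the proofs collected in \cite[Section 11.1]{Diening}, on which the argument ultimately relies.
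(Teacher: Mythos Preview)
Your sketch is correct and is precisely the classical subsequence/Chebyshev/Borel--Cantelli argument for capacities. Note, however, that the paper does not supply its own proof of this proposition: it simply refers the reader to \cite[Section 11.1.11]{Diening}, where exactly this argument is carried out. So there is nothing to compare against beyond observing that you have reproduced (with the right adaptations to the variable exponent modular via Proposition~\ref{propdesigualdades}) the proof from the cited reference, including a correct identification of the one genuine technicality---that the level set $E_k$ is only $p(x)$-quasi open---and the standard quasi-continuity workaround for it.
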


Now we need a characterization of the space $W^{1,p(x)}_{0}(\Omega)$ as the restriction of quasi continuous functions that vanishes quasi everywhere on $\R^N\setminus\Omega$. This theorem is esentialy contained in \cite[Corollary 11.2.5, Theorem 11.2.5]{Diening}. We include here the proof since a minor modification of the above mentioned result is needed and for the reader's convenience.

\begin{teo}[Characterization Theorem]\label{teocaracterizacion}
Let $D\subset {\R}^{N}$ be an open set, $\Omega\subset D$ an open subset and $p \in \mathcal P^{log}(\Omega)$. Then,
$$
u\in W^{1,p(x)}_{0}(\Omega) \Leftrightarrow u\in W^{1,p(x)}_{0}(D)\text{ and } \tilde{u}=0\ p(x)-q.e.\text{ in } D \setminus \Omega.
$$
\end{teo}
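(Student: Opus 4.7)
The plan is to prove the two implications separately.

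For the forward direction ($\Rightarrow$), since $p\in\PP^{log}(\Omega)$ the space $C^\infty_c(\Omega)$ is dense in $W^{1,p(x)}_0(\Omega)$, so one picks $\varphi_n\in C^\infty_c(\Omega)$ with $\varphi_n\to u$ in $W^{1,p(x)}(\Omega)$. Extending each $\varphi_n$ by zero to $D$ produces a sequence in $C^\infty_c(D)$ that converges in $W^{1,p(x)}(D)$ to the zero-extension of $u$, so $u\in W^{1,p(x)}_0(D)$. By Proposition \ref{propctp} a subsequence $\tilde\varphi_{n_k}\to\tilde u$ $p(x)$-q.e.\ in $D$; since each $\varphi_{n_k}$ is continuous and identically zero on $D\setminus\Omega$, we have $\tilde\varphi_{n_k}=\varphi_{n_k}=0$ there, and therefore $\tilde u=0$ $p(x)$-q.e.\ on $D\setminus\Omega$.

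For the reverse direction ($\Leftarrow$), the plan is to first reduce to the case where $u$ is bounded and nonnegative (by the splitting $u=u^+-u^-$, noting that $\widetilde{u^\pm}=(\tilde u)^\pm$ still vanish q.e.\ on $D\setminus\Omega$, followed by truncation $u\wedge M\to u$ in $W^{1,p(x)}(D)$), and then to approximate such a $u$ by the sequence $v_k:=(u-1/k)^+$. A direct modular estimate via Proposition \ref{propdesigualdades} shows $v_k\to u$ in $W^{1,p(x)}(D)$ as $k\to\infty$, so it is enough to prove $v_k\in W^{1,p(x)}_0(\Omega)$ for each fixed $k$.

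The approximation of $v_k$ by $C^\infty_c(\Omega)$ functions uses the quasi-continuous representative $\tilde v_k=(\tilde u-1/k)^+$, which vanishes on the $p(x)$-quasi-open set $\{\tilde u<1/k\}$; by hypothesis, this set covers $D\setminus\Omega$ except for some $N$ of zero $p(x)$-capacity. By the definition of quasi-openness one picks open sets $U_j\supset N$ with $\cp_{p(x)}(U_j,D)\to 0$, and uses the capacitary potentials provided by the proposition above to build cutoffs $\eta_j$ supported away from $U_j$ that tend to $1$ in a sense compatible with the Leibniz rule. Then $\eta_j v_k\to v_k$ in $W^{1,p(x)}(D)$, each $\eta_j v_k$ is quasi-everywhere supported in a subset of $\Omega$ away from $D\setminus\Omega$, and a standard mollification produces the desired $C^\infty_c(\Omega)$ approximants.

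The main obstacle is precisely this cutoff construction: one must absorb the null-capacity exceptional set $N\subset D\setminus\Omega$ into cutoffs that vanish there yet do not destroy the $W^{1,p(x)}$-norm of $v_k$ in the limit. This rests on the capacitary-potential theory developed earlier, Poincar\'e's inequality (Theorem \ref{teopoincare}), and the passage to quasi-continuous representatives via Theorem \ref{teolog}; the log-H\"older hypothesis enters only through these already-invoked tools.
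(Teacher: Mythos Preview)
Your forward direction is exactly the paper's argument. Your reverse direction is also built on the same skeleton as the paper's proof --- reduce to $u\ge 0$ and bounded, pass to the truncation $(u-\delta)^+$, kill a small-capacity set with a capacitary potential, then mollify --- but your sketch has one genuine gap and one imprecision.

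The gap is the missing reduction to \emph{compactly supported} $u$. The paper, after reducing to $u\ge 0$ and bounded, multiplies by a large-scale cutoff $\xi_n(x)=\xi(x/n)$ to force compact support, and only then runs the capacitary-cutoff argument. This matters at the very last step: for the mollification $\phi_j * [\eta_j v_k]$ to land in $C^\infty_c(\Omega)$ you need $\eta_j v_k$ to be supported in a \emph{compact} subset of $\Omega$. Your approximant is supported in $\{\tilde u\ge 1/k\}\setminus U_j$, and even when this set is closed, nothing in your reductions forces it to be bounded (the statement allows $D$ open, not bounded) or to stay away from $\partial D$. The paper's compact-support reduction is precisely what makes $V_n^c\subset B(0,R)$ and hence compact.

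The imprecision is in your choice of the open sets $U_j$. You take them to cover the null set $N\subset D\setminus\Omega$ ``by the definition of quasi-openness'', but covering $N$ by small-capacity open sets and making $\{\tilde u<1/k\}$ into a genuine open set are two different requirements. The paper takes the $W_n$ directly from the \emph{quasi-continuity} of $\tilde u$ (so that $\tilde u$ is continuous on $W_n^c$, hence $\{\tilde u<\delta\}\cup W_n$ is honestly open, hence its complement is closed), and then observes that $W_n$ can be taken to also contain the null set where $\tilde u\neq 0$ on $\Omega^c$. You need both roles played by the same $U_j$; as written, your $U_j$ only plays the second one, and without genuine openness of $\{\tilde u<1/k\}\cup U_j$ you cannot conclude that the support of $\eta_j v_k$ is closed, let alone compact.
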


\begin{proof} 

Let $u\in W^{1,p(x)}_0(\Omega)$, then, it is immediate that $u\in W^{1,p(x)}_0(D)$.

Now, let $\{\varphi_{n}\}_{n \in \N}\subset C^{\infty}_{c}(\Omega)$ such that $\varphi_{n}\to u$ in $W^{1,p(x)}_{0}(\Omega)$ (and therefore in $W^{1,p(x)}_{0}(D)$). 

Let $\{\varphi_{{n_{j}}}\}_{j \in \N}\subset \{\varphi_{n}\}_{n \in \N}$ be a subsequence such that $\varphi_{n_j}\to \tilde{u}$ $p(x)-$q.e. Then, since $\varphi_{{n_{j}}}=0$ in $D \setminus \Omega$, we have that $\tilde{u}=0$ $p(x)-$q.e. in $D \setminus \Omega$.

To see the converse, let us assume that $D={\R}^{N}$ (or else, we extend by zero). Since $u=u^{+}-u^{-}$, we can assume that $u\geq 0$. Moreover, since $\min\{u,n\}\in W^{1,p(x)}({\R}^{N})$ converges to $u$ in $W^{1,p(x)}({\R}^{N})$, we can assume that $u$ is bounded.
Finally, let us consider $\xi \in C^{\infty}_{c}(B(0,2))$ such that $0\leq \xi \leq 1$ and $\xi\equiv 1$ in $B(0,1)$. Setting $\xi_{n}(x)=\xi(\frac{x}{n})$, we have that $\xi_{n} u$ converges to $u$ in $W^{1,p(x)}({\R}^{N})$. Therefore we can assume that $u(x)=0$ for every $x\in (B(0,R))^{c}$ with $R$ large enough.

Therefore, we need to prove the converse for bounded, compactly supported and nonnegative functions $u\in W^{1,p(x)}_0(\R^N)$ such that $\tilde u = 0$ $p(x)-$q.e. in $\Omega^c$.

Since $\tilde u$ is $p(x)-$quasi continuous, there is a decreasing sequence of open sets $\{W_{n}\}_{n \in \N}$ such that $\cp_{p(x)}(W_{n},D)\to 0$ and $\tilde{u}|_{{\R}^{N} \setminus W_{n}}$ is continuous.

We can assume that $W_{n}$ contains the set of null capacity of ${\R}^{N} \setminus \Omega$ where $\tilde{u}\neq 0$. Therefore, $\tilde{u}= 0$ in $(\Omega\cup W_{n})^{c}=\Omega^{c}\cap W_{n}^{c}$.

Given $\delta>0$, set $V_{n}=\{x \colon \tilde{u}(x)<\delta\}\cup W_{n}$. Since $\tilde u$ is continuous in $\R^N\setminus W_n$, $V_{n}$ is an open set. Therefore, $V_{n}^{c}$ is a closed set. It is also bounded since $V_{n}^{c}\subset B(0,R)$. Then, $ V_{n}^{c}$ is compact.

Let $u_{W_{n}}$ be the capacitary potential of $W_{n}$, then $(u-\delta)^{+} (1-u_{W_{n}})=0$ a.e. in $\Omega \setminus V_{n}^{c}$.

Consider now a regularizing sequence $\{\phi_{j}\}_{j \in \N}$. Therefore, for $j$ sufficiently large we have that
$$
\phi_{j}*\left[(u-\delta)^{+} (1-u_{W_{n}}) \right]\in C_{}^{\infty}(\Omega).
$$
Observe that 
$$
\rho_{p(x)}(\nabla u_{W_{n}})=\cp_{p(x)}(W_{n},D)\rightarrow 0.
$$
By Proposition \ref{propdesigualdades}, we can conclude that $\|\nabla u_{W_n}\|_{p(x)}\to 0$ and, by Poincar\'e's inequality, $\|u_{W_n}\|_{1,p(x)}\to 0$. Therefore, $1-u_{W_n} \to 1$ in $W^{1,p(x)}(D)$ when $n\to \infty$. 

Obviously, $(u-\delta)^{+}\to u^{+}=u$ in $W^{1,p(x)}(D)$ when $\delta\to 0$ and observe that
\begin{align*}
\left\|(u-\delta)^{+}(1-u_{W_{n}})-u \right\|_{1,p(x)} \leq &\left\|1-u_{W_{n}}\right\|_{1,p(x)} \left\|(u-\delta)^{+}-u\right\|_{1,p(x)}\\
&+\left\|u \right\|_{1,p(x)} \left\|u_{W_{n}}\right\|_{1,p(x)}.
\end{align*}

Finally, taking the limit when when $j\to \infty$, $n\to \infty$ and $\delta\to 0$, we have that
$$
\phi_{j}*\left[(u-\delta)^{+} (1-u_{W_{n}}) \right]\to u,
$$
which completes the proof. 
\end{proof}

We end this subsection with a lemma that will be much helpful in the sequel.
\begin{lema} \label{lemainf}
Let $v \in W^{1,p(x)}_{0}(\mathbb{R}^{N})$ and $w \in W_{0}^{1,p(x)}(D)$ such that $|v|\leq w$ a.e. in $D$. Then, $v \in W_{0}^{1,p(x)}(D)$. 
\end{lema}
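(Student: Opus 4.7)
The plan is to realize $v$, regarded as a function on $D$, as a norm limit in $W^{1,p(x)}(D)$ of Sobolev functions with compact support in $D$, using $w$ itself as a cut-off. First I would reduce to the case $v, w \ge 0$ with $v \le w$: decomposing $v = v^+ - v^-$ and noting that $0 \le v^\pm \le |v| \le w$ allows one to treat each part separately, so WLOG $v \ge 0$. Extending $w$ by zero to all of $\R^N$ preserves membership in $W^{1,p(x)}_0(\R^N)$, so the inequality $v \le w$ holds a.e.\ in $\R^N$, giving in particular $v = 0$ a.e.\ on $\R^N\setminus D$. Replacing $w$ by $w^+$ if needed yields $w \ge 0$.

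Next, by the very definition of $W^{1,p(x)}_0(D)$ I would pick Sobolev functions $\{w_n\}$ with compact support in $D$ such that $w_n \to w$ in $W^{1,p(x)}(D)$; substituting $w_n^+$ for $w_n$ (whose support is still contained in $\sop(w_n)$ and which still converges to $w$ by continuity of truncation at zero), I may further assume $w_n \ge 0$. The candidate approximations are
$$
v_n := \min(v, w_n).
$$
Since $0 \le v_n \le w_n$, we have $\sop(v_n) \subset \sop(w_n)$, a compact subset of $D$; moreover $v_n \in W^{1,p(x)}(\R^N)$ because the lattice operation preserves this space. Hence each $v_n$ is a Sobolev function with compact support in $D$, and therefore $v_n \in W^{1,p(x)}_0(D)$ directly from the definition.

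Finally, since $v \le w$ gives $v = \min(v, w)$, I expect
$$
v_n = \min(v, w_n) \longrightarrow \min(v, w) = v \quad \text{in } W^{1,p(x)}(\R^N),
$$
which I would justify by a dominated-convergence argument applied to the modular (using majorants of the form $|v|^{p(x)} + |w_n|^{p(x)}$ and $|\nabla v|^{p(x)} + |\nabla w_n|^{p(x)}$, with $\|\nabla w_n\|_{p(x)}$ uniformly bounded) combined with Proposition \ref{propdesigualdades} to translate modular convergence into norm convergence. Since $W^{1,p(x)}_0(D)$ is closed in $W^{1,p(x)}(D)$, passing to the limit in $v_n \to v$ then yields $v \in W^{1,p(x)}_0(D)$. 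The main technical point I anticipate is the continuity of the lattice operation in $W^{1,p(x)}$, which is standard but a little more delicate than in the constant-exponent setting because the modular is not homogeneous.
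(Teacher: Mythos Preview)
Your proposal is correct and follows essentially the same route as the paper: reduce to $v\ge 0$, approximate $w$ by nonnegative compactly supported functions $w_n$, set $v_n=\min(v,w_n)$ (compactly supported, hence in $W^{1,p(x)}_0(D)$), and pass to the limit using continuity of the lattice operation. The only cosmetic difference is that the paper takes $w_n\in C^\infty_c(D)^+$ and mollifies $v_n$ at the end, whereas you work directly with compactly supported Sobolev functions, which is legitimate since $W^{1,p(x)}_0(D)$ is defined in the paper as the closure of such functions; your version is thus slightly cleaner.
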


\begin{proof}
It is enough to see that $v^{+} \in W_{0}^{1,p(x)}(D)$ (for $v^{-}$ we prodece similarly and haven shown this result for $v^{+}$ and $v^{-}$, we can state that is valid for $v=v^{+}-v^{-}$).

Since $w\geq 0$, by density we can consider $\{w_{n}\}_{n \in \N}\subset C^{\infty}_{c}(D)^{+}$ such that $\{w_{n}\}_{n \in \N}$ converges to $w$ in $W^{1,p(x)}(D)$.

Therefore, $\inf\{w_{n}, v^{+}\}$, which has compact support in $D$ (for each $w_{n}$ has so) converges to $\inf\{w, v^{+}\}$ which coincides with $v^{+}$ since $|v|\leq w$ a.e. in $D$.

Then, taking an adequate regularizing sequence, we obtain a sequence of $C^{\infty}_{c}(D)$ convergent to $v^{+}$, which completes the proof.
\end{proof}

\section{The Dirichlet problem for the $p(x)-$laplacian.}

We define the $p(x)-$laplacian as 
$$
\Delta_{p(x)}u:=\div(\left|\nabla u \right|^{p(x)-2}\nabla u).
$$
Observe that when $p(x)=2$ this operator agrees with the classical Laplace operator, and when $p(x)=p$ is constant is the well-known $p-$laplacian.

The Dirichlet problem for the $p(x)-$laplacian consists of finding $u$ $\in W^{1,p(x)}_{0}(\Omega)$ such that 
\begin{equation}\label{eq.dirichlet}
\left\{
\begin{array}{rl}
-\Delta_{p(x)}u=f & \text{en } \Omega,\\
u=0 & \text{en } \partial\Omega,
\end{array} \right.
\end{equation}
where $f\in L^{p'(x)}(\Omega)$ or, more generally, $f\in W^{-1,p'(x)}(\Omega)$.

In its weak formulation, this problem consists of finding $u$ $\in W^{1,p(x)}_{0}(\Omega)$ such that 
$$
\int_{\Omega}\left|\nabla u \right|^{p(x)-2}\nabla u \nabla v\, dx=\langle f, v\rangle \text{ for every } v \in W^{1,p(x)}_{0}(\Omega).
$$

Setting 
$$
I(v):= \int_{\Omega}\frac{1}{p(x)}\left|\nabla v \right|^{p(x)}dx - \langle f, v\rangle,
$$
the problem can be reformulated as finding $u \in W^{1,p(x)}_{0}(\Omega)$ such that $$
I(u)=\min\{I(v) \colon v \in W^{1,p(x)}_{0}(\Omega)\}.
$$

By standard methods, we obtain the following result

\begin{teo}
Assume $p_->1$. Then there exists a unique minimizer of $I(v)$ in $W^{1,p(x)}_0(\Omega)$ and a unique weak solution of \eqref{eq.dirichlet} $u\in W^{1,p(x)}_{0}(\Omega)$.
\end{teo}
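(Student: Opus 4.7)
The plan is the standard direct method of the calculus of variations, with the only subtlety being that the modular and the norm in $L^{p(x)}$ are related by the two-sided estimate of Proposition \ref{propdesigualdades} rather than by a clean power.

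First I would check that $I$ is well defined on $W^{1,p(x)}_0(\Omega)$: the integral term is finite because $\rho_{1,p(x)}(v)<\infty$ for every $v\in W^{1,p(x)}_0(\Omega)$, and the duality pairing $\langle f,v\rangle$ makes sense by definition of $W^{-1,p'(x)}(\Omega)$. Next I would establish coercivity. Splitting according to whether $\|\nabla v\|_{p(x)}\ge 1$ or $\le 1$ and using Remark \ref{minmax},
\begin{equation*}
I(v)\ge \frac{1}{p_+}\rho_{p(x)}(\nabla v)-\|f\|_{-1,p'(x)}\|\nabla v\|_{p(x)}\ge \frac{1}{p_+}\min\{\|\nabla v\|_{p(x)}^{p_-},\|\nabla v\|_{p(x)}^{p_+}\}-\|f\|_{-1,p'(x)}\|\nabla v\|_{p(x)}.
\end{equation*}
Since $p_->1$, this tends to $+\infty$ as $\|\nabla v\|_{p(x)}\to\infty$, so $I$ is coercive and bounded below.

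Second, I would prove weak lower semicontinuity. The functional $v\mapsto \int_\Omega \frac{1}{p(x)}|\nabla v|^{p(x)}\,dx$ is convex (because $t\mapsto t^{p(x)}/p(x)$ is convex for $p(x)>1$ and $|\nabla v|$ depends convexly on $v$) and strongly continuous on $W^{1,p(x)}_0(\Omega)$ (via Proposition \ref{propdesigualdades} applied to $|\nabla v|^{p(x)}$), hence weakly lower semicontinuous. The linear part $v\mapsto \langle f,v\rangle$ is weakly continuous by definition. Taking a minimizing sequence $\{u_n\}$, coercivity gives a bound in the reflexive space $W^{1,p(x)}_0(\Omega)$; Alaoglu yields a weakly convergent subsequence $u_{n_k}\rightharpoonup u$, and lower semicontinuity shows $u$ achieves the infimum.

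Third, uniqueness of the minimizer: if $u_1\neq u_2$ were two minimizers, strict convexity of $t\mapsto t^{p(x)}/p(x)$ (since $p_->1$) forces $I\bigl(\tfrac{u_1+u_2}{2}\bigr)<\tfrac12(I(u_1)+I(u_2))=\min I$, a contradiction. Finally, to identify the minimizer with the weak solution, I would compute the Gateaux derivative: for $v\in W^{1,p(x)}_0(\Omega)$ and $t\in\R$,
\begin{equation*}
\frac{d}{dt}I(u+tv)\Big|_{t=0}=\int_\Omega |\nabla u|^{p(x)-2}\nabla u\cdot\nabla v\,dx-\langle f,v\rangle,
\end{equation*}
the differentiation under the integral being justified by the dominated convergence theorem together with Hölder (Proposition \ref{propholder}), since $|\nabla u|^{p(x)-1}\in L^{p'(x)}(\Omega)$. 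Vanishing of this derivative at the minimizer $u$ is exactly the weak formulation of \eqref{eq.dirichlet}. Conversely, any weak solution is a critical point of the convex functional $I$ and hence the global minimizer; therefore uniqueness of the weak solution follows from uniqueness of the minimizer. No single step is a serious obstacle; the only point requiring mild attention is the coercivity estimate, where one must juggle the modular versus the norm through the two regimes $\|\nabla v\|_{p(x)}\lessgtr 1$.
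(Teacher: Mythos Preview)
Your proposal is correct and follows precisely the approach the paper indicates: the paper's proof reads in full ``The proof is standard and uses the direct method of the calculus of variations. We omit the details.'' You have simply supplied those details---coercivity via the modular--norm inequalities, weak lower semicontinuity from convexity, existence by reflexivity and Alaoglu, uniqueness from strict convexity, and identification with the weak formulation via the Gateaux derivative---and all steps are sound.
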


\begin{proof}
The proof is standard and uses the direct method of the calculus of variations. We omit the details.
\end{proof}

\begin{remark}
The unique weak solution of \eqref{eq.dirichlet} will be denoted by $u^{f}_{\Omega}$.
\end{remark}

\begin{prop}\label{acotacion.solucion}
Let $f\in W^{-1,p'(x)}(\Omega)$ and let $\A>0$ be such that $\|f\|_{-1,p'(x)}\le \A$. Then, there exists a constant $C$ depending only on $\A$, $p_-$ and $p_+$ such that
$$
\|\nabla u_\Omega^f\|_{p(x)}\le C.
$$
\end{prop}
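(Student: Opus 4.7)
The plan is the standard energy estimate, executed in the variable exponent setting via Proposition \ref{propdesigualdades}. Let $u = u^f_\Omega$. I would first test the weak formulation of \eqref{eq.dirichlet} against $u$ itself (which is a legitimate test function since $u \in W^{1,p(x)}_0(\Omega)$), obtaining
\begin{equation*}
\rho_{p(x)}(\nabla u) = \int_\Omega |\nabla u|^{p(x)}\, dx = \langle f, u\rangle.
\end{equation*}
Then I would bound the right-hand side by the duality pairing and the assumed bound $\|f\|_{-1,p'(x)} \le \A$, getting
\begin{equation*}
\rho_{p(x)}(\nabla u) \le \|f\|_{-1,p'(x)} \|\nabla u\|_{p(x)} \le \A\, \|\nabla u\|_{p(x)}.
\end{equation*}

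Next I would combine this with the lower bound in Proposition \ref{propdesigualdades}, namely
\begin{equation*}
\min\{\|\nabla u\|_{p(x)}^{p_-}, \|\nabla u\|_{p(x)}^{p_+}\} \le \rho_{p(x)}(\nabla u).
\end{equation*}
Here I split into two cases. If $\|\nabla u\|_{p(x)} \le 1$, then nothing needs to be proved (the bound is $1$). If $\|\nabla u\|_{p(x)} > 1$, then since $p_- \le p_+$ the minimum on the left is $\|\nabla u\|_{p(x)}^{p_-}$, so chaining the two inequalities gives
\begin{equation*}
\|\nabla u\|_{p(x)}^{p_-} \le \A\, \|\nabla u\|_{p(x)},
\end{equation*}
and, using $p_- > 1$, one divides to conclude $\|\nabla u\|_{p(x)} \le \A^{1/(p_--1)}$. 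Thus in either case
\begin{equation*}
\|\nabla u\|_{p(x)} \le \max\{1, \A^{1/(p_--1)}\} =: C,
\end{equation*}
which is the desired constant depending only on $\A$ and $p_-$ (the dependence on $p_+$ is only used to guarantee that the two powers in Proposition \ref{propdesigualdades} are finite, so that the split into the two cases is well-posed).

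There is no real obstacle here; the only subtlety worth flagging is the standard one in variable exponent spaces, namely that one cannot directly recover $\|\nabla u\|_{p(x)}$ from $\rho_{p(x)}(\nabla u)$ with a single inequality, which forces the case split on whether the Luxemburg norm is $\le 1$ or $> 1$. Everything else — the admissibility of $u$ as a test function and the duality estimate — is immediate from the definitions set up in Section 2.
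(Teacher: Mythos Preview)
Your proof is correct and follows essentially the same approach as the paper: test the equation against $u$ itself, bound the duality pairing by $\|f\|_{-1,p'(x)}\|\nabla u\|_{p(x)}$, and then use the modular--norm inequality from Proposition~\ref{propdesigualdades} after disposing of the trivial case $\|\nabla u\|_{p(x)}\le 1$. Your write-up is in fact slightly more careful in making the case split explicit.
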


\begin{proof}
Let us assume that $\|\nabla u_\Omega^f\|_{p(x)}>1$ (otherwise, the result is clear). By Proposition \ref{propdesigualdades},  
$$
\int_\Omega |\nabla u_\Omega^f|^{p(x)}=\langle f,u_\Omega^f \rangle \leq \|f\|_{-1,p'(x)}\|u_\Omega^f\|_{p(x)}\leq \|f\|_{-1,p'(x)} (\rho_{p(x)}(u_\Omega^f))^{\frac{1}{p_-}}.
$$
Therefore,
$$
\int_\Omega |\nabla u_\Omega^f|^{p(x)}\leq \|f\|_{-1,p'(x)}^{\frac{p_-}{p_{-}-1}},
$$
which completes the proof.
\end{proof}

In what follows, the monontonicity of the $p(x)-$laplacian is crucial. This fact is a consequence of the following well-known lemma that is proved in \cite[p.p. 210]{Simon}.
\begin{lema}\label{lemadesigualdad}
There is a constant $c_{1}>0$ such that for every $a,b\in {\R}^{N}$,
$$
(|b|^{p-2}b-|a|^{p-2}a)\cdot (b-a)  \geq \begin{cases}
c_{1}|b-a|^{p} & \text{if } p \geq 2,\\
c_{1}\frac{|b-a|^{2}}{(|b|+|a|)^{2-p}} & \text{if } p \leq 2.
\end{cases}
$$
\end{lema}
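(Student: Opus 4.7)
The plan is to derive both cases from a single integral representation of the vector field $F(v) := |v|^{p-2}v$. Parametrizing the segment joining $a$ and $b$ by $v(t) := (1-t)a + tb$, the fundamental theorem of calculus gives
\[
F(b) - F(a) = \int_0^1 DF(v(t))(b-a)\, dt.
\]
A direct computation of the Jacobian of $F$ yields $DF(v) = |v|^{p-2} I + (p-2)|v|^{p-4}\, v\otimes v$ (away from $v=0$), so
\[
(F(b)-F(a))\cdot (b-a) = \int_0^1 \Big(|v(t)|^{p-2}|b-a|^2 + (p-2)|v(t)|^{p-4} (v(t)\cdot(b-a))^2\Big)\, dt.
\]

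For the subquadratic case $p \le 2$, I would use Cauchy--Schwarz $(v\cdot(b-a))^2 \le |v|^2|b-a|^2$ in the (now nonpositive) second term to obtain the pointwise lower bound $(p-1)|v(t)|^{p-2}|b-a|^2$ on the integrand. Since $|v(t)| \le (1-t)|a|+t|b| \le |a|+|b|$ and $p-2 \le 0$, we have $|v(t)|^{p-2} \ge (|a|+|b|)^{p-2}$, which yields the desired estimate with $c_1 = p-1$.

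For the superquadratic case $p \ge 2$ the second term in the integrand is nonnegative and can simply be dropped, reducing the problem to showing
\[
\int_0^1 |v(t)|^{p-2}\, dt \ge c\, |b-a|^{p-2}.
\]
Here the key observation is the triangle inequality $|v(t)|+|v(1-t)| \ge |v(t)-v(1-t)| = |2t-1|\,|b-a|$. Combined with the elementary fact $|x|^{p-2}+|y|^{p-2} \ge 2^{-(p-2)}(|x|+|y|)^{p-2}$ (valid for $p\ge 2$), this gives $|v(t)|^{p-2}+|v(1-t)|^{p-2} \ge 2^{-(p-2)}|2t-1|^{p-2}|b-a|^{p-2}$. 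Integrating over $[0,1]$ and using the symmetry $t\leftrightarrow 1-t$ of the original integral produces the required lower bound, with an explicit constant depending only on $p$.

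The main obstacle is precisely the superquadratic case: the subquadratic range is straightforward because $|v(t)|$ admits the uniform pointwise upper bound $|a|+|b|$, while for $p\ge 2$ no analogous pointwise \emph{lower} bound on $|v(t)|$ is available (indeed $v(t)$ may vanish along the segment when $a$ and $b$ lie on opposite sides of the origin). The symmetrization $t\leftrightarrow 1-t$ is what rescues the argument, forcing at least one of $|v(t)|,|v(1-t)|$ to be of order $|b-a|$ on a set of definite measure.
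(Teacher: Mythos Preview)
Your argument is correct. Note, however, that the paper does not supply its own proof of this lemma: it records the inequality as well known and defers to \cite[p.~210]{Simon}. So there is nothing to compare against, only to check, and your integral-representation approach via $F(b)-F(a)=\int_0^1 DF(v(t))(b-a)\,dt$ is a standard and clean derivation yielding explicit constants ($c_1=p-1$ for $1<p\le 2$, and $c_1=2^{1-p}/(p-1)$ for $p\ge 2$ after carrying out the $\int_0^1|2t-1|^{p-2}\,dt$ integral). The symmetrization $t\leftrightarrow 1-t$ is indeed the right device in the superquadratic case, precisely because no pointwise lower bound on $|v(t)|$ is available.

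One minor technical point you may wish to make explicit: in the range $1<p<2$ the Jacobian $DF$ is undefined at the origin, so if the segment $v(t)$ passes through $0$ the fundamental-theorem-of-calculus identity needs a word of justification. This is harmless, since near such a $t_0$ one has $|v(t)|^{p-2}\sim |t-t_0|^{p-2}$ with $p-2>-1$, so the integrand is locally integrable and the identity follows by a routine approximation (or by restricting to $a,b$ with $0\notin[a,b]$ and passing to the limit). Your pointwise lower bound $(p-1)|v(t)|^{p-2}|b-a|^2$ remains valid at such points (the left side being $+\infty$), so the estimate is unaffected.
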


\begin{remark}\label{deltap}
Observe that if $u\in W^{1,p(x)}_0(\Omega)$, then $-\Delta_{p(x)}u\in W^{-1,p'(x)}(\Omega)$. In fact,
$$
\langle -\Delta_{p(x)}u, v\rangle = \int_{\Omega} |\nabla u|^{p(x)-2}\nabla u\nabla v\, dx.
$$
\end{remark}

\begin{defi}
Let $f\in W^{-1,p'(x)}(\Omega)$. We say that $f\ge 0$ if $\langle f, v\rangle\ge 0$ for every $v\in W^{1,p(x)}_0(\Omega)$ such that $v\ge 0$.

Let $f, g\in W^{-1,p'(x)}(\Omega)$. We say that $g\le f$ if $f-g\ge 0$.
\end{defi}

We now prove the comparison principle for \eqref{eq.dirichlet}
\begin{lema}[Comparison Principle] \label{ppiomax}
Let $u,v \in W_{0}^{1,p(x)}(D)$ be such that
$$\begin{cases}
-\Delta_{p(x)}u\leq -\Delta_{p(x)}v & \text{in } D,\\
u\leq v & \text{on } \partial D.
\end{cases}
$$
Then, $u\leq v$ en $D$.
\end{lema}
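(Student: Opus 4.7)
The plan is to use $(u-v)^+$ as a test function in the weak formulation of the inequality $-\Delta_{p(x)} u \le -\Delta_{p(x)} v$. Since $u,v\in W^{1,p(x)}_0(D)$, their difference lies in $W^{1,p(x)}_0(D)$, and taking the positive part preserves this space (truncation at $0$ does not increase the modular, and one can regularize via Lemma \ref{lemainf} with $w=|u-v|$ if needed). Thus $(u-v)^+\in W^{1,p(x)}_0(D)$ is an admissible, nonnegative test function, so the hypothesis and Remark \ref{deltap} give
$$
\int_D \bigl(|\nabla u|^{p(x)-2}\nabla u - |\nabla v|^{p(x)-2}\nabla v\bigr)\cdot \nabla (u-v)^+\, dx \le 0.
$$

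Next I would exploit the pointwise structure of $\nabla(u-v)^+$: on the set $\{u>v\}$ this gradient equals $\nabla u-\nabla v$, and on $\{u\le v\}$ it vanishes almost everywhere. Hence the integral reduces to
$$
\int_{\{u>v\}} \bigl(|\nabla u|^{p(x)-2}\nabla u - |\nabla v|^{p(x)-2}\nabla v\bigr)\cdot (\nabla u-\nabla v)\, dx \le 0.
$$
Applying Lemma \ref{lemadesigualdad} pointwise in $x$ (with the constant $p$ there replaced by $p(x)$, which is legitimate because the Simon inequality is a pointwise statement in the vectors $a,b\in\R^N$ for any fixed exponent), the integrand is nonnegative a.e. Combined with the displayed inequality, it must vanish a.e.\ on $\{u>v\}$, which by the strict monotonicity in Lemma \ref{lemadesigualdad} forces $\nabla u=\nabla v$ a.e.\ on $\{u>v\}$. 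Consequently $\nabla (u-v)^+=0$ a.e.\ in $D$.

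To close the argument I would invoke Poincar\'e's inequality (Theorem \ref{teopoincare}) for $(u-v)^+\in W^{1,p(x)}_0(D)$: since $\|\nabla (u-v)^+\|_{p(x)}=0$, we obtain $\|(u-v)^+\|_{p(x)}=0$, hence $(u-v)^+\equiv 0$ a.e.\ in $D$, i.e.\ $u\le v$ in $D$.

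The only real subtlety I expect is the first step: verifying rigorously that $(u-v)^+\in W^{1,p(x)}_0(D)$. Since $u,v\in W^{1,p(x)}_0(D)$ already vanish on $\partial D$ in the Sobolev sense, and Lemma \ref{lemainf} guarantees that a function dominated by an element of $W^{1,p(x)}_0(D)$ (here $(u-v)^+\le |u-v|\le |u|+|v|\in W^{1,p(x)}_0(D)$) lies in $W^{1,p(x)}_0(D)$, this is not a genuine obstacle; the hypothesis $u\le v$ on $\partial D$ is automatic here. Everything else is a direct pointwise application of the Simon-type inequality.
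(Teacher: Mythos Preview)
Your proof is correct and follows essentially the same approach as the paper: test with $(u-v)^+$, use the monotonicity inequality of Lemma \ref{lemadesigualdad} to conclude the integrand is nonnegative, and deduce $\nabla(u-v)^+=0$. The only cosmetic differences are that the paper explicitly splits $D$ into $\{p(x)\ge 2\}$ and $\{p(x)<2\}$ before applying Lemma \ref{lemadesigualdad} (which you correctly observe is unnecessary for the mere nonnegativity conclusion), and the paper concludes via ``$(u-v)^+$ constant and in $W^{1,p(x)}_0(D)$ implies zero'' where you invoke Poincar\'e directly; your justification that $(u-v)^+\in W^{1,p(x)}_0(D)$ is also a bit more explicit than the paper's.
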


\begin{proof}
Let us call $g:=-\Delta_{p(x)}u$ and $f:=-\Delta_{p(x)}v$. Then, by Remark \ref{deltap}, we obtain that, given $\varphi \in W_{0}^{1,p(x)}(D)$,
$$
\int_{D}(|\nabla u|^{p(x)-2}\nabla u-|\nabla v|^{p(x)-2}\nabla v) \nabla \varphi(x)\, dx=\langle g-f, \varphi \rangle.
$$
In particular, taking $\varphi=(u-v)^{+} \in W_{0}^{1,p(x)}(D)$, since $g \leq f$ we have that
$$
\int_{D}(|\nabla u|^{p(x)-2}\nabla u-|\nabla v|^{p(x)-2}\nabla v) \nabla (u-v)^{+}\, dx
=\langle g-f, (u-v)^{+}\rangle \leq 0. 
$$
Taking into account that $\nabla (u-v)^{+}=(\nabla u-\nabla v)\chi_{\{u>v\}}$, we conclude that
$$
\int_{\{u>v\}}(|\nabla u|^{p(x)-2}\nabla u-|\nabla v|^{p(x)-2}\nabla v)(\nabla u-\nabla v)\, dx\leq 0.
$$
Now, let us define $\Omega_1':=\{x\in D \colon p(x)\ge 2\}$ and $\Omega_1'':=\{x\in D\colon p(x)< 2\}$. Therefore, $D = \Omega_1' \cup \Omega_1''$ (disjoint union).

Now, by Lemma \ref{lemadesigualdad}, there is a constant $c>0$ such that
\begin{align*}
&\int_{\{u\geq v\}}(|\nabla u|^{p(x)-2}\nabla u-|\nabla v|^{p(x)-2}\nabla v) (\nabla u-\nabla v)\, dx \\
&\geq c \int_{\{u\geq v\}\cap \Omega_{1}'}|\nabla u-\nabla v|^{p(x)}\, dx + c  \int_{\{u\geq v\}\cap \Omega_{1}''}\frac{|\nabla u-\nabla v|^{2}}{(|\nabla u|+|\nabla v|)^{2-p(x)}}\, dx.
\end{align*}
Therefore, since $\nabla (u-v)^{+}=(\nabla u-\nabla v)\chi_{u>v}$, we conclude that
$$
0 \geq \int_{\Omega_{1}'} |\nabla (u-v)^{+}|^{p(x)}\, dx, + \int_{\Omega_{1}''}\frac{|\nabla (u-v)^{+}|^{2}}{(|\nabla u|+|\nabla v|)^{2-p(x)}}\, dx.
$$
Then, $\nabla (u-v)^{+}=0$ in $D$. So $(u-v)^{+}$ is constant in $D$. Since $(u-v)^{+} \in W_{0}^{1,p(x)}(D)$, we have that $(u-v)^{+}=0$. Therefore $u-v\leq 0$, which completes the proof.
\end{proof}

\begin{corol}[Weak maximum principle]
Let $f\in W^{-1,p'(x)}(\Omega)$ be such that $f\ge 0$. Then $u_\Omega^f\ge 0$.
\end{corol}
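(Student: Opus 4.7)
The plan is to obtain this as an immediate consequence of the Comparison Principle (Lemma \ref{ppiomax}) applied with the pair $(u,v) = (0,\, u_\Omega^f)$ on the set $\Omega$. Both functions lie in $W^{1,p(x)}_0(\Omega)$, so the boundary inequality $u \le v$ on $\partial\Omega$ is trivially satisfied; concretely, it is encoded by $(u-v)^+ = (u_\Omega^f)^- \in W^{1,p(x)}_0(\Omega)$, which is the only ingredient actually used in the proof of Lemma \ref{ppiomax}. Since $-\Delta_{p(x)}0 = 0$ as an element of $W^{-1,p'(x)}(\Omega)$ and $-\Delta_{p(x)} u_\Omega^f = f \ge 0$ by hypothesis, the distributional inequality $-\Delta_{p(x)} u \le -\Delta_{p(x)} v$ holds in the sense of the definition just introduced. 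Lemma \ref{ppiomax}, whose proof carries over verbatim with $D$ replaced by any bounded open set, then gives $0 \le u_\Omega^f$.

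If one prefers a self-contained argument that does not invoke the comparison principle, one can test the weak formulation of $-\Delta_{p(x)} u_\Omega^f = f$ directly with $\varphi := (u_\Omega^f)^- \in W^{1,p(x)}_0(\Omega)$. Nonnegativity of $\varphi$ together with $f \ge 0$ gives $\langle f,\varphi\rangle \ge 0$, whereas the chain rule for Sobolev functions yields
$$
\int_\Omega |\nabla u_\Omega^f|^{p(x)-2} \nabla u_\Omega^f \cdot \nabla \varphi \, dx = -\int_{\{u_\Omega^f < 0\}} |\nabla u_\Omega^f|^{p(x)} \, dx \le 0.
$$
Equality must then be forced in both, so $\nabla (u_\Omega^f)^- \equiv 0$, and Poincar\'e's inequality (Theorem \ref{teopoincare}) in $W^{1,p(x)}_0(\Omega)$ concludes $(u_\Omega^f)^- \equiv 0$.

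Since the comparison principle has just been established in full generality, there is no real obstacle. The only minor pedantic point is that Lemma \ref{ppiomax} is formally stated with ambient set $D$, whereas here we need it on $\Omega$; inspection of its proof shows that nothing beyond the validity of $(u-v)^+$ as a test function in $W^{1,p(x)}_0$ of the ambient set is used, so the transfer is immediate.
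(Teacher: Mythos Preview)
Your proof is correct and follows exactly the paper's approach: the paper's proof consists of the single line ``Just apply Lemma \ref{ppiomax} with $u=0$ and $v=u_\Omega^f$.'' Your additional self-contained argument via testing with $(u_\Omega^f)^-$ is also valid and is essentially what the proof of Lemma \ref{ppiomax} specializes to in this case, so it adds nothing new but is harmless.
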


\begin{proof}
Just apply Lemma \ref{ppiomax} with $u=0$ and $v=u_\Omega^f$.
\end{proof}

The following proposition gives the monotonicity property of the solution with respect to the domain. The proof follows the ideas of \cite[Theorem 3.2.5.]{Henrot} where the linear case $p(x)=2$ is treated. Nevertheless, since the $p(x)-$laplacian is nonlinear, the monotonicity property of this operator comes into play replacing linearity in the argument.

\begin{prop}[Property of monotonicity with respect to the domain.]\label{propmonotonia}
Let $\Omega_{1}\subset \Omega_{2}$ and $f\in W^{-1,p'(x)}(\Omega_2)$ be such that $f\geq 0$. Then, $u_{\Omega_1}^{f}\leq u_{\Omega_2}^{f}$.
\end{prop}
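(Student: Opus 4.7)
The plan is to test the weak formulations for $u_{\Omega_1}^f$ and $u_{\Omega_2}^f$ against the common test function $(u_{\Omega_1}^f - u_{\Omega_2}^f)^+$ and then exploit the monotonicity of the $p(x)-$laplacian (Lemma \ref{lemadesigualdad}), mimicking the strategy of Lemma \ref{ppiomax}. Write $u_1 := u_{\Omega_1}^f$, extended by zero outside $\Omega_1$, and $u_2:=u_{\Omega_2}^f$. Since $\Omega_1\subset \Omega_2$, the canonical extension places $u_1\in W^{1,p(x)}_0(\Omega_2)$. Because $f\ge 0$, the weak maximum principle gives $u_1,u_2\ge 0$ a.e., and hence $w:=\min\{u_1,u_2\}$ satisfies $0\le w\le u_1$ a.e. Since $w\in W^{1,p(x)}_0(\Omega_2)\subset W^{1,p(x)}_0(\R^N)$ and $u_1\in W^{1,p(x)}_0(\Omega_1)$, Lemma \ref{lemainf} yields $w\in W^{1,p(x)}_0(\Omega_1)$, and therefore the admissible test function
$$
\varphi := (u_1-u_2)^+ = u_1-w
$$
belongs to $W^{1,p(x)}_0(\Omega_1)\subset W^{1,p(x)}_0(\Omega_2)$.

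Next, I would use $\varphi$ as a test function in the weak formulation of \eqref{eq.dirichlet} on $\Omega_1$ (for $u_1$) and on $\Omega_2$ (for $u_2$). Since the right-hand sides coincide, subtracting gives
$$
\int_{\Omega_2}\bigl(|\nabla u_1|^{p(x)-2}\nabla u_1-|\nabla u_2|^{p(x)-2}\nabla u_2\bigr)\cdot \nabla\varphi\, dx = 0,
$$
where the integrand over $\Omega_2\setminus \Omega_1$ vanishes thanks to $\nabla u_1\equiv 0$ there. Using $\nabla\varphi = (\nabla u_1-\nabla u_2)\chi_{\{u_1>u_2\}}$ a.e., splitting $\Omega_2=\{p(x)\ge 2\}\cup\{p(x)<2\}$ and invoking Lemma \ref{lemadesigualdad} exactly as in the proof of Lemma \ref{ppiomax}, one concludes that $\nabla\varphi = 0$ a.e.\ in $\Omega_2$.

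Finally, because $\varphi\in W^{1,p(x)}_0(\Omega_1)$, Poincar\'e's inequality (Theorem \ref{teopoincare}) forces $\varphi=0$ a.e., i.e.\ $u_1\le u_2$ a.e.\ in $\Omega_2$. The only nontrivial step is the membership $\varphi\in W^{1,p(x)}_0(\Omega_1)$: in the linear case treated in \cite{Henrot} one simply observes that $\varphi$ is supported where $u_1>0$, while here we are forced to combine the weak maximum principle with Lemma \ref{lemainf} in order to produce a legitimate test function sitting in the smaller Sobolev space, which is what makes the monotonicity inequality applicable in the form stated above.
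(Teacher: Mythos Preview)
Your proof is correct and follows essentially the same strategy as the paper's own argument: both test with $(u_1-u_2)^+$, use Lemma \ref{lemainf} together with the nonnegativity coming from $f\ge 0$ to place this function in $W^{1,p(x)}_0(\Omega_1)$, and then apply Lemma \ref{lemadesigualdad} exactly as in Lemma \ref{ppiomax}. The only cosmetic difference is that the paper bounds $(u_1-u_2)^+\le u_1^+$ directly (using only $u_2\ge 0$) and applies Lemma \ref{lemainf} to $(u_1-u_2)^+$, whereas you detour through $w=\min\{u_1,u_2\}$ and write $\varphi=u_1-w$; the two routes are equivalent.
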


\begin{proof}
We will denote $u_{1}=u_{\Omega_1}^{f}$ and $u_{2}=u_{\Omega_2}^{f}$.

Given $v \in W_{0}^{1,p(x)}(\Omega_{1})\subset W^{1,p(x)}_0(\Omega_2)$,
\begin{equation}\label{primeraintegral}
\int_{\Omega_{i}}|\nabla u_{i}|^{p(x)-2}\nabla u_{i} \nabla v\, dx=\langle f, v\rangle,\quad  i=1,2. 
\end{equation}
Therefore,
\begin{equation}\label{segundaintegral} 
\int_{\Omega_{1}}(|\nabla u_{1}|^{p(x)-2}\nabla u_{1}-|\nabla u_{2}|^{p(x)-2}\nabla u_{2}) \nabla v\, dx=0,
\end{equation}
for every $v\in W^{1,p(x)}_0(\Omega_1)$.

Since $f\geq 0$, we have that $u_{2}\geq 0$. Then, $(u_{1}-u_{2})^{+}\leq u_{1}^{+} \in W_{0}^{1,p(x)}(\Omega_{1})$ and hence, by Lemma \ref{lemainf}, $(u_{1}-u_{2})^{+} \in W_{0}^{1,p(x)}(\Omega_{1})$. Therefore
$$
\int_{\Omega_{1}}(|\nabla u_{1}|^{p(x)-2}\nabla u_{1}-|\nabla u_{2}|^{p(x)-2}\nabla u_{2}) \nabla (u_{1}-u_{2})^{+}\, dx=0.
$$
Now, let us define $\Omega_1':=\{x\in \Omega_1\colon p(x)\ge 2\}$ and $\Omega_1'':=\{x\in \Omega_1\colon p(x)< 2\}$. Therefore, $\Omega_1 = \Omega_1' \cup \Omega_1''$ (disjoint union).

Now, by Lemma \ref{lemadesigualdad}, there is a constant $c>0$ such that
\begin{align*}
0&=\int_{\{u_{1}\geq u_{2}\}\cap \Omega_{1}}(|\nabla u_{1}|^{p(x)-2}\nabla u_{1}-|\nabla u_{2}|^{p(x)-2}\nabla u_{2}) (\nabla u_{1}-\nabla u_{2})\, dx \\
&\geq c \int_{\{u_{1}\geq u_{2}\}\cap \Omega_{1}'}|\nabla u-\nabla v|^{p(x)}\, dx + c  \int_{\{u_{1}\geq u_{2}\}\cap \Omega_{1}''}\frac{|\nabla u-\nabla v|^{2}}{(|\nabla u|+|\nabla v|)^{2-p(x)}}\, dx.
\end{align*}
Therefore, since $\nabla (u-v)^{+}=(\nabla u-\nabla v)\chi_{u>v}$, we conclude that
$$
0 \geq \int_{\Omega_{1}'} |\nabla (u-v)^{+}|^{p(x)}\, dx, + \int_{\Omega_{1}''}\frac{|\nabla (u-v)^{+}|^{2}}{(|\nabla u|+|\nabla v|)^{2-p(x)}}\, dx.
$$
Then, $\nabla (u_{1}-u_{2})^{+}=0$ in $\Omega_{1}$. Hence, $(u_{1}-u_{2})^{+}$ is constant in $\Omega_{1}$. Since $(u_{1}-u_{2})^{+} \in W_{0}^{1,p(x)}(\Omega_{1})$, we have that $(u_{1}-u_{2})^{+}=0$. Therefore $u_{1}-u_{2}\leq 0$, which completes the proof.
\end{proof}

We now end this section with an stability result for solutions of the Dirichlet problem
\begin{teo}\label{estabilidadDirichlet}
Let $D\subset \R^N$ be open, and let $f_i\in W^{-1,p'(x)}(D)$, $i=1,2$. There exists a constant $C>0$ depending only on $p_-$, $p_+$ and $\max\{\|f_i\|_{-1,p'(x)}\}$ such that, if $\Omega\subset D$, 
$$
\int_{D} |\nabla u_\Omega^{f_1} - \nabla u_\Omega^{f_2}|^{p(x)}\, dx \le C (\|f_1-f_2\|_{-1,p'(x)} + \|f_1-f_2\|_{-1,p'(x)}^\beta),
$$
where the constant $\beta>0$ depends only on $p_-$ and $p_+$.
\end{teo}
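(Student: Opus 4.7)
The plan is to subtract the two Euler--Lagrange equations, test against $u_1-u_2$, and feed the resulting left-hand side into Simon's algebraic inequality (Lemma \ref{lemadesigualdad}), exactly as in the proofs of Lemma \ref{ppiomax} and Proposition \ref{propmonotonia}; the only real novelty is to convert the weighted integral that appears on the region $\{p<2\}$ into the honest modular $\int|\nabla(u_1-u_2)|^{p(x)}\,dx$. Setting $u_i := u^{f_i}_\Omega$, Proposition \ref{acotacion.solucion} provides the a priori bound $\|\nabla u_i\|_{p(x)}\le K(\A,p_-,p_+)$, so by Proposition \ref{propdesigualdades} also $\int_\Omega(|\nabla u_1|+|\nabla u_2|)^{p(x)}\,dx\le K'$. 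Subtracting the weak formulations for $f_1$ and $f_2$ and testing with $v=u_1-u_2\in W^{1,p(x)}_0(\Omega)$ gives
$$
\int_\Omega \bigl(|\nabla u_1|^{p(x)-2}\nabla u_1-|\nabla u_2|^{p(x)-2}\nabla u_2\bigr)\!\cdot\!\nabla(u_1-u_2)\,dx=\langle f_1-f_2,u_1-u_2\rangle,
$$
whose right-hand side is bounded by $\|f_1-f_2\|_{-1,p'(x)}\|\nabla(u_1-u_2)\|_{p(x)}\le 2K\|f_1-f_2\|_{-1,p'(x)}$.

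Next I would split $\Omega=\Omega'\cup\Omega''$ with $\Omega'=\{p\ge 2\}$, $\Omega''=\{p<2\}$, and apply Lemma \ref{lemadesigualdad} to obtain
$$
c\int_{\Omega'}|\nabla(u_1-u_2)|^{p(x)}\,dx+c\int_{\Omega''}\frac{|\nabla(u_1-u_2)|^2}{(|\nabla u_1|+|\nabla u_2|)^{2-p(x)}}\,dx\le C\|f_1-f_2\|_{-1,p'(x)}.
$$
The $\Omega'$-contribution is already of the announced form, with exponent $1$. Denoting $S$ the weighted integral on $\Omega''$, it remains to show that $\int_{\Omega''}|\nabla(u_1-u_2)|^{p(x)}\,dx$ is bounded by a power of $S$.

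For this I would use the pointwise factorisation
$$
|\nabla(u_1-u_2)|^{p(x)}=\left(\frac{|\nabla(u_1-u_2)|^2}{(|\nabla u_1|+|\nabla u_2|)^{2-p(x)}}\right)^{p(x)/2}(|\nabla u_1|+|\nabla u_2|)^{p(x)(2-p(x))/2},
$$
integrate on $\Omega''$, and apply the variable-exponent H\"older inequality (Proposition \ref{propholder}) with the conjugate exponents $q(x)=2/p(x)$ and $q'(x)=2/(2-p(x))$. The two resulting modulars are precisely $S$ and $\int_{\Omega''}(|\nabla u_1|+|\nabla u_2|)^{p(x)}\,dx\le K'$; converting them into Luxemburg norms through Remark \ref{minmax} yields
$$
\int_{\Omega''}|\nabla(u_1-u_2)|^{p(x)}\,dx\le C\bigl(\|f_1-f_2\|_{-1,p'(x)}+\|f_1-f_2\|_{-1,p'(x)}^{\,p_-/2}\bigr),
$$
which, added to the $\Omega'$-bound, gives the assertion with $\beta=p_-/2$.

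The main obstacle I foresee is the very last step: because $q(x)=2/p(x)$ has a variable infimum and supremum (with $q_-$ possibly equal to $1$ when $p_+=2$ is approached in $\Omega''$), one has to distinguish the regimes $\|f_1-f_2\|_{-1,p'(x)}\lessgtr 1$ in order to read off a single explicit exponent $\beta$ depending only on $p_\pm$ from Proposition \ref{propdesigualdades}. This is essentially bookkeeping, and it is the only place in the argument where the variable-exponent framework forces a genuine modification of the classical constant-exponent stability proof.
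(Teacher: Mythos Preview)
Your proposal is correct and follows essentially the same route as the paper's own proof: subtract the equations, test with $u_1-u_2$, split into $\{p\ge 2\}$ and $\{p<2\}$, apply Lemma \ref{lemadesigualdad} on each piece, and on $\{p<2\}$ use the identical pointwise factorisation together with the variable-exponent H\"older inequality with exponents $2/p(x)$ and $2/(2-p(x))$, converting norms to modulars via Remark \ref{minmax}. The paper does not name the explicit value $\beta=p_-/2$ but simply records that $\beta$ depends on $p_\pm$; your identification of it, and your remark about the case distinction $\|f_1-f_2\|\lessgtr 1$ needed because $q_-$ may equal $1$, are exactly the bookkeeping the paper sweeps into the phrase ``for some constants $\alpha$ and $\beta$ depending only on $p_-$ and $p_+$''.
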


Theorem \ref{estabilidadDirichlet} immediately implies the following Corollary. 
\begin{corol}\label{continuidad.dato}
Let $f_n, f\in W^{-1,p'(x)}(\Omega)$ be such that $\|f_n-f\|_{-1,p'(x)}\to 0$. Then 
$$
\|\nabla u_\Omega^{f_n} - \nabla u_\Omega^{f}\|_{p(x)}\to 0.
$$
\end{corol}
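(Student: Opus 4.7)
The corollary is essentially an unpacking of Theorem \ref{estabilidadDirichlet} together with the modular/norm equivalence from Proposition \ref{propdesigualdades}. The plan is to apply the stability theorem with $D=\Omega$, $f_1 = f_n$ and $f_2 = f$, and then pass from convergence of the modular to convergence of the Luxemburg norm.

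First I would note that the hypothesis $\|f_n - f\|_{-1,p'(x)}\to 0$ guarantees that $\{f_n\}_{n\in\N}$ is bounded in $W^{-1,p'(x)}(\Omega)$, since $\|f_n\|_{-1,p'(x)}\le \|f\|_{-1,p'(x)}+\|f_n-f\|_{-1,p'(x)}$. Consequently $\mathcal A := \sup_{n}\max\{\|f_n\|_{-1,p'(x)},\|f\|_{-1,p'(x)}\}<\infty$, and therefore the constant $C$ produced by Theorem \ref{estabilidadDirichlet} can be chosen independently of $n$.

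Next I would apply Theorem \ref{estabilidadDirichlet} directly, with this uniform $C$ and the exponent $\beta>0$ (depending only on $p_-,p_+$) given there, to obtain
$$
\rho_{p(x)}\!\left(\nabla u_\Omega^{f_n}-\nabla u_\Omega^{f}\right) = \int_{\Omega}|\nabla u_\Omega^{f_n}-\nabla u_\Omega^{f}|^{p(x)}\, dx \le C\!\left(\|f_n-f\|_{-1,p'(x)} + \|f_n-f\|_{-1,p'(x)}^{\beta}\right).
$$
Since the right-hand side tends to $0$ as $n\to\infty$, we conclude that $\rho_{p(x)}(\nabla u_\Omega^{f_n}-\nabla u_\Omega^{f})\to 0$.

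Finally, I would invoke Remark \ref{minmax} (the norm-modular equivalence) to upgrade convergence of the modular to convergence of the Luxemburg norm: for $n$ large enough that the modular is less than $1$,
$$
\|\nabla u_\Omega^{f_n}-\nabla u_\Omega^{f}\|_{p(x)} \le \rho_{p(x)}\!\left(\nabla u_\Omega^{f_n}-\nabla u_\Omega^{f}\right)^{1/p_+} \to 0,
$$
which yields the desired conclusion. There is no serious obstacle here, the corollary is a direct consequence of the preceding theorem; the only mild subtlety to keep track of is the uniformity of the constant $C$ in $n$, handled by the bound on $\|f_n\|_{-1,p'(x)}$ above.
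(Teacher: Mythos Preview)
Your argument is correct and is exactly the unpacking the paper has in mind: it states that the corollary follows immediately from Theorem~\ref{estabilidadDirichlet}, and you have simply made explicit the uniformity of the constant $C$ in $n$ and the passage from modular to norm convergence via Remark~\ref{minmax}. There is nothing to add or change.
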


Now we proceed with the proof of the Theorem.

\begin{proof}[Proof of Theorem \ref{estabilidadDirichlet}]
Let us denote $u_i = u_{\Omega}^{f_i}$. 

Given $\varphi \in W_{0}^{1,p(x)}(\Omega)$, we have that
\begin{equation}\label{uno}
\int_{\Omega}|\nabla u_i|^{p(x)-2}\nabla u_i \nabla \varphi \, dx=\langle f_i, \varphi \rangle,\quad i=1,2.
\end{equation}
In particular, considering $\varphi=u_1-u_2\in W_{0}^{1,p(x)}(\Omega)$ and subtracting, we obtain
\begin{align*}
\int_{\Omega}(|\nabla u_1|^{p(x)-2}\nabla u_1 - |\nabla u_2|^{p(x)-2}\nabla u_2)& (\nabla u_1 - \nabla u_2)\, dx \\
&= \langle f_{1}-f_{2}, u_1-u_2\rangle\\
& \leq  \|f_{1}-f_{2}\|_{-1,p'(x)} \|\nabla u_1 - \nabla u_2\|_{p(x)}\\
&\leq \|f_{1}-f_{2}\|_{-1,p'(x)} (\|\nabla u_1\|_{p(x)} + \|\nabla u_2\|_{p(x)})\\
&\le C\|f_{1}-f_{2}\|_{-1,p'(x)}
\end{align*}
where we have used Proposition \ref{acotacion.solucion} in the last inequality.

On the other hand, naming $\Omega_{1}=\Omega \cap \{p(x)\geq 2\}$ and $\Omega_{2}=\Omega \cap \{p(x)< 2\}$, we have that
\begin{align*}
\int_{\Omega}&(|\nabla u_1|^{p(x)-2}\nabla u_1-|\nabla u_2|^{p(x)-2}\nabla u_2) (\nabla u_1 - \nabla u_2)\, dx\\
&=\sum_{i=1}^2 \int_{\Omega_{i}}(|\nabla u_1|^{p(x)-2}\nabla u_1 - |\nabla u_2|^{p(x)-2}\nabla u_2) (\nabla u_1 - \nabla u_2)\, dx.
\end{align*}
Let us study each of these integrals. 
By Lemma \ref{lemadesigualdad},
$$
\int_{\Omega_{1}}(|\nabla u_1|^{p(x)-2}\nabla u_1 - |\nabla u_2|^{p(x)-2}\nabla u_2) (\nabla u_1-\nabla u_2)\, dx 
\geq c \int_{\Omega_{1}} |\nabla (u_1-u_2)|^{p(x)}\, dx.
$$
Let us now analyze the integral over $\Omega_{2}$.
\begin{align*}
\int_{\Omega_{2}} |\nabla (u_1-u_2)|^{p(x)}dx &=  \int_{\Omega_{2}} (|\nabla u_1|+|\nabla u_2|)^{\frac{(2-p(x))p(x)}{2}} \left(\frac{|\nabla (u_1-u_2)|}{(|\nabla u_1|+|\nabla u_2|)^{\frac{2-p(x)}{2}}}\right)^{p(x)}\, dx\\
&\leq 2 \|(|\nabla u_1|+|\nabla u_2|)^{\frac{(2-p(x))p(x)}{2}}\|_{\frac{2}{2-p(x)}} \left\| \left(\frac{|\nabla (u_1-u_2)|}{(|\nabla u_1|+|\nabla u_2|)^{\frac{2-p(x)}{2}}}\right)^{p(x)}\right\|_{\frac{2}{p(x)}}\\
&\le 2 \left(\int_{\Omega_2}(|\nabla u_1|+|\nabla u_2|)^{p(x)}dx \right)^{\alpha}
\left(\int_{\Omega_2}\frac{|\nabla (u_1-u_2)|^2}{(|\nabla u_1|+|\nabla u_2|)^{2-p(x)}}\, dx\right)^{\beta}.
\end{align*}
for some constants $\alpha$ and $\beta$ depending only on $p_-$ and $p_+$. Let us observe that for the first inequality we took into account H\"older's inequality and for the second one, Observation \ref{minmax}.

Let us now find a bound for the first factor. In fact, by Proposition \ref{acotacion.solucion}.
\begin{align*}
\int_{\Omega_{2}}(|\nabla u_1|+|\nabla u_2|)^{p(x)}\, dx & \leq 2^{p_+ - 1} \int_{\Omega_{2}}(|\nabla u_1|^{p(x)}+|\nabla u_2|^{p(x)})\, dx\le C.
\end{align*}

Observe that, by Lemma \ref{lemadesigualdad}, we are able to find a bound for the second factor.
$$
\int_{\Omega_2}\frac{|\nabla (u_1-u_2)|^2}{(|\nabla u_1|+|\nabla u_2|)^{2-p(x)}}\, dx \leq C \int_{\Omega_2} (|\nabla u_1|^{p(x)-2}\nabla u_1-|\nabla u_2|^{p(x)-2}\nabla u_2)(\nabla u_1-\nabla u_2)\, dx.
$$
Then,
\begin{align*}
\int_{\Omega_2} |\nabla (u_1-u_2)|^{p(x)}\, dx &\leq
C \left( \int_{\Omega_{2}} (|\nabla u_1|^{p(x)-2}\nabla u_2 - |\nabla u_2|^{p(x)-2}\nabla u_2) (\nabla u_1 - \nabla u_2)\, dx\right)^{\beta}\\
&\le C \left( \int_{\Omega} (|\nabla u_1|^{p(x)-2}\nabla u_2 - |\nabla u_2|^{p(x)-2}\nabla u_2) (\nabla u_1 - \nabla u_2)\, dx\right)^{\beta}\\
&\le C \|f_1-f_2\|_{-1,p'(x)}^\beta.
\end{align*}
So we can conclude that
$$
\int_{\Omega} |\nabla (u_1-u_2)|^{p(x)}\, dx \le C (\|f_1-f_2\|_{-1,p'(x)} + \|f_1-f_2\|_{-1,p'(x)}^\beta).
$$
This finishes the proof.
\end{proof}

\section{Continuity of the Dirichlet problem with respect to perturbations on the domain.}

In this section we investigate the dependence of the solutions of the Dirichlet problem $u^f_\Omega$ with respect to perturbations on the domain. We will analyze a rather general problem considering a sequence of uniformly bounded domains $\Omega_n$ converging to a limiting domain $\Omega$ in the Haussdorf complementary topology. Then we study whether $u^f_{\Omega_n}$ converges to $u^f_\Omega$ or not.

We begin this section by defining a notion of convergence of domains that will be essential for our next results. 

\begin{defi}[Hausdorff complementary topology.] Let $D\subset \R^N$ be compact. Given $K_1, K_2\subset D$ compact sets, we define de Hausdorff distance $d_H$ as
$$
d_H(K_1, K_2) := \max\left\{\sup_{x\in K_1} \inf_{y\in K_2} \|x-y\|, \sup_{x\in K_2} \inf_{y\in K_1} \|x-y\|\right\}.
$$

Now, let $\Omega_1,\Omega_2\subset D$ be open sets, we define the Hausdorff complementary distance $d^H$ as
$$
d^H(\Omega_1, \Omega_2) := d_H(D \setminus \Omega_1, D \setminus \Omega_2).
$$

Finally, we say that $\{\Omega_n\}_{n\in \N}$ converges to $\Omega$ in the sense of the Hausdorff complementary topology, denoted by $\Omega_n \stackrel{H}{\to}\Omega$, if $d^H(\Omega_n, \Omega)\to 0$.
\end{defi}
For an study and properties of this topology of open sets, we refer to the book \cite{Henrot}.

We now present the one property that will be essential  for our purposes. 

\begin{prop}
Let $K\subset \Omega$ be a compact set. If $\Omega_n\stackrel{H}{\to}\Omega$, then $K\subset \Omega_n$ for every $n$ large enough.
\end{prop}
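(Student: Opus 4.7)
The plan is to argue by contradiction, using the fact that $K$ compact and $D\setminus\Omega$ closed and disjoint forces a strictly positive distance $\delta=\dist(K,D\setminus\Omega)$, and then to observe that Hausdorff convergence of the complements eventually forces $D\setminus\Omega_n$ to live inside the $\delta$-neighborhood of $D\setminus\Omega$, which is disjoint from $K$.

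More precisely, I would first dispose of the trivial case in which $D\setminus\Omega=\emptyset$: then $\Omega=D$, so $K\subset D$; moreover the hypothesis $d^H(\Omega_n,\Omega)\to 0$ (read with the usual convention $d_H(\emptyset,A)=+\infty$ whenever $A\ne\emptyset$) forces $D\setminus\Omega_n=\emptyset$, i.e.\ $\Omega_n=D$, for all $n$ large, and we are done. Assume therefore $D\setminus\Omega\neq\emptyset$, and set
\[
\delta:=\dist(K,D\setminus\Omega).
\]
Since $K$ is compact, $D\setminus\Omega$ is closed, and $K\cap(D\setminus\Omega)=\emptyset$ (because $K\subset\Omega$), we have $\delta>0$.

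Next, by the definition of $d^H$, there is $n_0$ such that for every $n\ge n_0$,
\[
d_H(D\setminus\Omega_n,\,D\setminus\Omega)<\delta.
\]
I now argue that $K\subset\Omega_n$ for all such $n$. Suppose not; pick $x\in K\cap(D\setminus\Omega_n)$. Because $x\in D\setminus\Omega_n$, the definition of $d_H$ gives
\[
\inf_{y\in D\setminus\Omega}\|x-y\|\ \le\ \sup_{z\in D\setminus\Omega_n}\inf_{y\in D\setminus\Omega}\|z-y\|\ \le\ d_H(D\setminus\Omega_n,D\setminus\Omega)<\delta.
\]
On the other hand, because $x\in K$,
\[
\inf_{y\in D\setminus\Omega}\|x-y\|\ \ge\ \dist(K,D\setminus\Omega)=\delta,
\]
a contradiction. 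Hence $K\cap(D\setminus\Omega_n)=\emptyset$, i.e.\ $K\subset\Omega_n$, for every $n\ge n_0$.

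There is no real obstacle here: the argument is a standard exercise about the Hausdorff metric, and the only mildly subtle point is remembering to separate the case $\Omega=D$, for which the one-sided $\sup$ over an empty set needs the correct convention. Everything else is a direct unfolding of the definition of $d_H$ together with compactness of $K$.
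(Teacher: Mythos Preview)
Your proof is correct and is precisely the standard unfolding of the definition that the paper has in mind: the paper does not spell out an argument at all, stating only that ``the proof is immediate from the definition'' and referring to \cite{Henrot}. Your handling of the degenerate case $D\setminus\Omega=\emptyset$ and the positive-distance argument via compactness is exactly what is needed.
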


\begin{proof}
The proof is immediate from the definition. See \cite{Henrot}.
\end{proof}

Now we state a couple of corollaries of Proposition \ref{acotacion.solucion} that will be most useful.
\begin{corol}\label{corocontinuidad}
Let $D\subset \mathbb{R}^{N}$ be an open bounded set and let  $\Omega_n \subset D$ be a sequence of open domains. Let $p \in \PP^{log}(\Omega)$ such that $p_->1$. Then, $\{u_{\Omega_n}^{f}\}_{n\in\N}$ is bounded in ${ W^{1,p(x)}_{0}(D)}$.
\end{corol}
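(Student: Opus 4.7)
The corollary is essentially a uniform version of Proposition \ref{acotacion.solucion}: on each fixed $\Omega_n$ that proposition already yields the gradient bound, and the whole content of the corollary is that the bound does not deteriorate as $n$ varies. The plan is therefore to apply Proposition \ref{acotacion.solucion} separately on every $\Omega_n$ and then argue that the constant it produces can be chosen independent of $n$.

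The key step is to control $\|f\|_{-1,p'(x),\Omega_n}$ by $\|f\|_{-1,p'(x),D}$ uniformly in $n$. Every $u\in W^{1,p(x)}_0(\Omega_n)$ extends canonically (by zero) to an element of $W^{1,p(x)}_0(D)$ with the same $\|\nabla u\|_{p(x)}$, the legitimacy of this extension being guaranteed by Theorem \ref{teocaracterizacion}. As recorded in the remark following the definition of the dual norm, the induced restriction $W^{-1,p'(x)}(D)\to W^{-1,p'(x)}(\Omega_n)$ then has operator norm at most one, so
$$
\|f\|_{-1,p'(x),\Omega_n}\leq \|f\|_{-1,p'(x),D}=:\A
$$
for every $n$.

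Combining, Proposition \ref{acotacion.solucion} applied on each $\Omega_n$ produces a constant $C=C(\A,p_-,p_+)$, independent of $n$, with $\|\nabla u_{\Omega_n}^{f}\|_{p(x),\Omega_n}\leq C$. Extending $u_{\Omega_n}^{f}$ by zero to $D$ yields an element of $W^{1,p(x)}_0(D)$ whose Sobolev norm, which equals $\|\nabla u_{\Omega_n}^{f}\|_{p(x),D}$, coincides with $\|\nabla u_{\Omega_n}^{f}\|_{p(x),\Omega_n}$, so the sequence is bounded in $W^{1,p(x)}_0(D)$. The only delicate point in the argument is the norm-preserving character of this extension by zero, which is exactly what Theorem \ref{teocaracterizacion} provides; the rest of the proof is a direct application of previously established results, so no genuine obstacle arises.
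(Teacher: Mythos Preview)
Your proof is correct and is exactly the argument the paper intends: the corollary is stated without proof as an immediate consequence of Proposition \ref{acotacion.solucion}, and your write-up simply makes explicit why the constant there is uniform in $n$ via the norm-one restriction $W^{-1,p'(x)}(D)\to W^{-1,p'(x)}(\Omega_n)$ already recorded in the remark preceding Section 2.2. One minor point: you do not need Theorem \ref{teocaracterizacion} for the extension by zero or its norm-preserving property; the inclusion $W^{1,p(x)}_0(\Omega_n)\subset W^{1,p(x)}_0(D)$ follows directly from the density of $C^\infty_c(\Omega_n)$ (since $p\in\mathcal P^{\log}$), and equality of the gradient norms is then automatic because the extended gradient vanishes outside $\Omega_n$.
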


\begin{corol}\label{lemacontinuidad}
Under the same assumptions as in Corollary \ref{corocontinuidad}, we have that the sequence $\{|\nabla u_{\Omega_n}^f|^{p(x)-2} \nabla u_{\Omega_n}^f\}_{n \in \N}$ is bounded in $L^{p'(x)}(D)$.
\end{corol}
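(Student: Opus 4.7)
The plan is to compute the modular of $g_n := |\nabla u_{\Omega_n}^f|^{p(x)-2}\nabla u_{\Omega_n}^f$ in $L^{p'(x)}(D)$ and bound it using Corollary \ref{corocontinuidad} together with the equivalence between norm and modular given by Proposition \ref{propdesigualdades} (Remark \ref{minmax}).

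First I would use the pointwise algebraic identity $(p(x)-1)\,p'(x) = p(x)$, which follows from the definition $p'(x) = p(x)/(p(x)-1)$. This gives
$$
|g_n(x)|^{p'(x)} = \bigl(|\nabla u_{\Omega_n}^f(x)|^{p(x)-1}\bigr)^{p'(x)} = |\nabla u_{\Omega_n}^f(x)|^{p(x)},
$$
so that
$$
\rho_{p'(x)}(g_n) = \int_D |\nabla u_{\Omega_n}^f|^{p(x)}\, dx = \rho_{p(x)}(\nabla u_{\Omega_n}^f).
$$

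Next, by Corollary \ref{corocontinuidad}, $\|\nabla u_{\Omega_n}^f\|_{p(x)}$ is bounded uniformly in $n$, and by Proposition \ref{propdesigualdades} this implies that $\rho_{p(x)}(\nabla u_{\Omega_n}^f)$ is uniformly bounded by a constant depending only on $p_-$, $p_+$ and the uniform bound. Combining with the identity above, $\rho_{p'(x)}(g_n)$ is uniformly bounded, and one final application of Remark \ref{minmax} (to the exponent $p'(x)$, which also satisfies $1 < (p')_- \le (p')_+ < \infty$) yields the uniform bound for $\|g_n\|_{p'(x)}$, finishing the proof. There is no real obstacle here: the argument is an immediate consequence of the duality relation $(p-1)p' = p$ and the norm-modular equivalence.
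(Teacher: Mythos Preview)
Your argument is correct and is precisely the natural one: the paper states this corollary without proof, treating it as an immediate consequence of Corollary~\ref{corocontinuidad} via the pointwise identity $(p(x)-1)p'(x)=p(x)$ and the norm--modular equivalence of Proposition~\ref{propdesigualdades}/Remark~\ref{minmax}, which is exactly what you have written out.
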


We now extend to variable exponent spaces Proposition 3.7 in \cite{Bucur}.

\begin{teo}\label{teo1}
Let us denote $u_{n} = u_{\Omega_n}^f$. Assume that $u_{n}\rightharpoonup u^{*}$ weakly in $W^{1,p(x)}_{0}(D)$. Let $\Omega \subset D$ be such that for every compact subset $K \subset \Omega$, there is an integer $n_{0}$ such that $K \subset \Omega_{n}$ for every $n\geq n_{0}$. Then, there holds that 
$$
-\Delta_{p(x)}u^{*}=f \text{ in } \Omega.
$$
\end{teo}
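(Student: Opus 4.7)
The plan is to show that $u^*$ satisfies the weak formulation of $-\Delta_{p(x)}u^* = f$ in $\Omega$, i.e. that
$$
\int_\Omega |\nabla u^*|^{p(x)-2}\nabla u^*\cdot\nabla\varphi\,dx = \langle f,\varphi\rangle, \qquad \varphi\in C^\infty_c(\Omega).
$$
First I would fix $\varphi\in C^\infty_c(\Omega)$. By the hypothesis on $\{\Omega_n\}$, for $n$ large enough $\sop(\varphi)\subset\Omega_n$ so $\varphi\in W^{1,p(x)}_0(\Omega_n)$ is admissible in the equation for $u_n$, giving
$$
\int_{D} A(\nabla u_n)\cdot\nabla\varphi\,dx = \langle f,\varphi\rangle, \qquad A(\xi):=|\xi|^{p(x)-2}\xi.
$$
By Corollary \ref{lemacontinuidad}, $\{A(\nabla u_n)\}$ is bounded in $L^{p'(x)}(D)$, so along a subsequence $A(\nabla u_n)\rightharpoonup \sigma$ weakly in $L^{p'(x)}(D)$. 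Passing to the limit yields $\int_D\sigma\cdot\nabla\varphi\,dx=\langle f,\varphi\rangle$, i.e.\ $-\div\sigma = f$ in $\Omega$ distributionally. The remaining (and only nontrivial) task is to identify $\sigma = A(\nabla u^*)$ a.e.\ in $\Omega$.

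For the identification I would use Minty's monotonicity method. Fix $\psi\in C^\infty_c(\Omega)$ with $\psi\ge 0$; for $n$ large the function $\psi(u_n-u^*)$ has compact support in $\Omega_n$ and belongs to $W^{1,p(x)}_0(\Omega_n)$ (by Theorem \ref{teocaracterizacion}, or by direct approximation). Using it as test function in the equation for $u_n$ and expanding the gradient gives
$$
\int_D \psi\, A(\nabla u_n)\cdot\nabla(u_n-u^*)\,dx = \langle f, \psi(u_n-u^*)\rangle - \int_D (u_n-u^*)\, A(\nabla u_n)\cdot\nabla\psi\,dx.
$$
Both terms on the right tend to zero: the first by weak convergence $u_n\rightharpoonup u^*$ in $W^{1,p(x)}_0(D)$, and the second because the compactness of the embedding $W^{1,p(x)}_0(D)\hookrightarrow L^{p(x)}(D)$ gives $u_n\to u^*$ strongly in $L^{p(x)}(D)$, while $A(\nabla u_n)\cdot\nabla\psi$ stays bounded in $L^{p'(x)}(D)$. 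Splitting $\nabla(u_n-u^*)=\nabla u_n-\nabla u^*$ and using the weak convergence $A(\nabla u_n)\rightharpoonup\sigma$ to handle the $\nabla u^*$ piece, I obtain
$$
\lim_{n\to\infty}\int_D \psi\, A(\nabla u_n)\cdot\nabla u_n\,dx = \int_D \psi\,\sigma\cdot\nabla u^*\,dx.
$$

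Now for any $v\in L^{p(x)}(D;\R^N)$, the monotonicity of $A$ (Lemma \ref{lemadesigualdad}) gives the pointwise inequality $(A(\nabla u_n)-A(v))\cdot(\nabla u_n-v)\ge 0$; multiplying by $\psi\ge 0$, integrating, and passing to the limit using the identity just established yields
$$
\int_D \psi\,(\sigma-A(v))\cdot(\nabla u^*-v)\,dx \ge 0.
$$
Taking $v=\nabla u^* - \lambda w$ with $w\in L^\infty(D;\R^N)$ arbitrary, dividing by $\lambda>0$, and letting $\lambda\to 0^+$ (by dominated convergence and continuity of $A$), I get $\int_D \psi(\sigma-A(\nabla u^*))\cdot w\,dx\ge 0$ for every such $w$, hence $\sigma=A(\nabla u^*)$ a.e.\ on $\{\psi>0\}$. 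Since $\psi\in C^\infty_c(\Omega)$ is arbitrary, $\sigma=A(\nabla u^*)$ a.e.\ in $\Omega$, completing the proof. The main obstacle is the nonlinearity of the flux $A$: weak convergence of $\nabla u_n$ alone is useless, and the whole game is to produce the Minty-type limit identity for $\int \psi A(\nabla u_n)\cdot\nabla u_n$; the key technical point enabling this is that $\psi(u_n-u^*)$ is a legitimate test function in $\Omega_n$ precisely because $\sop\psi\subset\Omega_n$ for large $n$, which is exactly the hypothesis of the theorem.
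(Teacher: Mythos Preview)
Your proof is correct. Both your argument and the paper's share the same skeleton: fix $\varphi\in C^\infty_c(\Omega)$, use the hypothesis to make a localized function $\psi(u_n-u^*)$ (the paper writes $\eta$ for $\psi$) admissible in the equation for $u_n$, and combine weak convergence with the compact embedding $W^{1,p(x)}_0(D)\hookrightarrow L^{p(x)}(D)$ to obtain
\[
\int_D \psi\,A(\nabla u_n)\cdot\nabla(u_n-u^*)\,dx\to 0.
\]
The two proofs diverge only in how they identify the weak limit of $A(\nabla u_n)$. You run Minty's trick: pass to the limit in $\int_D \psi\,(A(\nabla u_n)-A(v))\cdot(\nabla u_n-v)\,dx\ge 0$ for arbitrary $v$, then linearize with $v=\nabla u^*-\lambda w$. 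The paper instead invokes the quantitative lower bounds of Lemma~\ref{lemadesigualdad} (treating $p(x)\ge 2$ and $p(x)<2$ separately) to extract $\nabla u_n\to\nabla u^*$ a.e.\ on $K^{\varepsilon/2}$, and then identifies the limit pointwise. Your route is a bit cleaner, needing only the qualitative monotonicity $(A(a)-A(b))\cdot(a-b)\ge 0$ and avoiding the case split; the paper's route yields the additional information that $\nabla u_n\to\nabla u^*$ a.e.\ locally in $\Omega$, which is not used later but is of independent interest.
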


\begin{remark}\label{loquefalta}
Observe that in order to conclude that $u^* = u_{\Omega}^f$ it remains to see that $u^*\in W^{1,p(x)}_0(\Omega)$. 
\end{remark}

\begin{proof}
As $p\in \PP^{log}(D)$, we need to verify that, given $\varphi \in C^{\infty}_{c}(\Omega)$, the following equality is valid:
$$
\int_{\Omega}|\nabla u^{*}|^{p(x)-2}\nabla u^{*}\nabla \varphi \, dx=\langle f,\varphi \rangle.
$$

Let $\varphi \in C^{\infty}_{c}(\Omega)$. Since $\sop(\varphi)\subset \Omega$ is compact, there is an integer $n_{0}$ such that $\sop(\varphi)\subset \Omega_{n}$ for every $n\geq n_{0}$. Therefore, $\varphi \in C^{\infty}_{c}(\Omega_{n})$ for every $n\geq n_{0}$.

Set $K=\sop(\varphi)$ and $K^{\ve}=\{x \in \mathbb{R}^{N} \colon d(x,K)<\ve \}$ with $\ve$ sufficiently small to make sure that $K^{\ve}\subset \subset\Omega_{n}\cap \Omega$ for every $n\geq n_{1}$.

We will, from now on, work with $n\geq \max\{n_{0},n_{1}\}$.

Let $\eta \in C^{\infty}_{c}(\Omega)$ such that $\eta=1$ in $K^{\frac{\ve}{2}}$, $\eta=0$ in $(K^{\ve})^{c}$ and $0\leq \eta \leq 1$.

Consider $\phi_{n}=\eta (u_{n}-u^{*})$ and since $\phi_n\in W^{1,p(x)}_0(\Omega_n)$ we have
$$
\int_{D}|\nabla u_{n}|^{p(x)-2}\nabla u_{n}\nabla \phi_{n} \, dx=\int_{\Omega_{n}}|\nabla u_{n}|^{p(x)-2}\nabla u_{n}\nabla \phi_{n} \, dx= \langle f, \phi_{n} \rangle.
$$
Standard computations now give us
$$
\int_{D}|\nabla u_{n}|^{p(x)-2}\nabla u_{n} \eta \nabla (u_{n}-u^{*})\, dx \leq \langle f, \phi_n\rangle - \int_{D}|\nabla u_{n}|^{p(x)-2}\nabla u_{n} \nabla \eta (u_{n}-u^{*})\, dx.
$$
Since $u_{n}\rightharpoonup u^{*}$ en $W^{1,p(x)}_{0}(D)$, $\phi_n\rightharpoonup 0$ in $W^{1,p(x)}_0(\Omega)$ and so $\langle f, \phi_n\rangle\to 0$.

On the one hand, by the compactness of the embedding $W^{1,p(x)}_0(D)\subset L^{p(x)}(D)$, we have that $u_{n}\rightarrow u^{*}$ in $L^{p(x)}(D)$, and so, by H\"older's inequality,
$$
\int_{D}|\nabla u_{n}|^{p(x)-2}\nabla u_{n} \nabla \eta (u_{n}-u^{*})\, dx\le 2\|\nabla \eta\|_\infty \| |\nabla u_n|^{p(x)-2}\nabla u_n\|_{p'(x)} \|u_n-u\|_{p(x)} \rightarrow 0,
$$
by Corollary \ref{lemacontinuidad}. Then we can conclude that
$$
\limsup \int_{D}|\nabla u_{n}|^{p(x)-2}\nabla u_{n} \eta \nabla (u_{n}-u^{*})\, dx\leq 0.
$$
Since $\eta=0$ en $(K^{\ve})^{c}$,
\begin{equation}\label{primera}
\limsup \int_{K^{\ve}}|\nabla u_{n}|^{p(x)-2}\nabla u_{n} \eta \nabla (u_{n}-u^{*})\, dx\leq 0\
\end{equation}

On the other hand, since $\nabla u_{n}\rightharpoonup \nabla u^{*}$ en $L^{p'(x)}(K^{\ve})$,
\begin{equation}\label{segunda}
\int_{K^{\ve}}|\nabla u^{*}|^{p(x)-2}\nabla u^{*} \eta \nabla (u_{n}-u^{*})\, dx\rightarrow 0\
\end{equation}
By \eqref{primera} and \eqref{segunda} we have that
$$
\limsup \int_{K^{\ve}}(|\nabla u_{n}|^{p(x)-2}\nabla u_{n}-|\nabla u^{*}|^{p(x)-2}\nabla u^{*})\eta \nabla (u_{n}-u^{*})\, dx\leq 0.
$$
Since $K^{\frac{\ve}{2}}\subset K^{\ve}$, by Lemma \ref{lemadesigualdad}, we can conclude that
$$
\lim \int_{K^{\frac{\ve}{2}}}(|\nabla u_{n}|^{p(x)-2}\nabla u_{n}-|\nabla u^{*}|^{p(x)-2}\nabla u^{*})\nabla (u_{n}-u^{*})\, dx=0.
$$
Again, by Lemma \ref{lemadesigualdad}, it follows that $(|\nabla u_{n}|^{p(x)-2}\nabla u_{n} - |\nabla u^{*}|^{p(x)-2}\nabla u^{*})\nabla (u_{n}-u^{*})\to 0$ in $L^{1}(K^{\frac{\ve}{2}})$ and therefore a.e. in $K^{\frac{\ve}{2}}$.

From these facts, it easily follows that 
\begin{equation}\label{conv.puntual}
\nabla u_{n}\rightarrow \nabla u^{*} \text{ a.e. in } K^{\frac{\ve}{2}}.
\end{equation} 

Finally, by Corollary \ref{lemacontinuidad}, there exists $\xi\in L^{p(x)}(K^{\frac{\ve}{2}})^N$ such that $|\nabla u_{n}|^{p(x)-2}\nabla u_{n}\rightharpoonup \xi$ in $L^{p'(x)}(K^{\frac{\ve}{2}})$.

From \eqref{conv.puntual}, we can conclude that $\xi=|\nabla u^{*}|^{p(x)-2}\nabla u^{*}$ in $K^{\frac{\ve}{2}}$ and that
$$
\int_{K^{\frac{\ve}{2}}}|\nabla u_{n}|^{p(x)-2}\nabla u_{n}\nabla \varphi \, dx\rightarrow \int_{K^{\frac{\ve}{2}}}|\nabla u^{*}|^{p(x)-2}\nabla u^{*}\nabla \varphi \, dx.
$$
Since $\sop(\nabla \varphi)\subset K\subset K^{\frac{\ve}{2}}\subset K^{\ve}\subset \Omega_{n}\cap \Omega$,
$$
\int_{\Omega_{n}}|\nabla u_{n}|^{p(x)-2}\nabla u_{n} \nabla \varphi \, dx\rightarrow \int_{\Omega}|\nabla u^{*}|^{p(x)-2}\nabla u^{*}\nabla \varphi \, dx.
$$
This finishes the proof.
\end{proof}

As we mentioned in Remark \ref{loquefalta}, in order to obtain the continuity of solutions with respect to the domain, we need to provide with conditions than ensure $u^{*}\in W^{1,p(x)}_{0}(\Omega)$. This is the content of the next theorem.

\begin{teo}\label{teo2dirichlet}
Let $D\subset {\R}^{N}$ be an open bounded set and let $\Omega_{n}, \Omega \subset D$ be open for every $n$. Let $p\in \PP^{log}(D)$. If $\Omega_n \stackrel{H}{\to}\Omega$ and $\cp_{p(x)}(\Omega_{n} \setminus \Omega,D)\to 0$, then $u_{\Omega_n}^f\rightharpoonup u_{\Omega}^f$ weakly in $W^{1,p(x)}_{0}(D)$.
\end{teo}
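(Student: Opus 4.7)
The plan is to extract a weakly convergent subsequence of $u_n:=u_{\Omega_n}^f$, identify the limit $u^*$ as a distributional solution of the equation on $\Omega$ via Theorem \ref{teo1}, and then prove $u^*\in W^{1,p(x)}_0(\Omega)$ by a capacity/quasi-continuity argument so that $u^*=u_\Omega^f$ by uniqueness. By Corollary \ref{corocontinuidad} the sequence $\{u_n\}$ is bounded in $W^{1,p(x)}_0(D)$, so reflexivity yields a subsequence (not relabeled) with $u_n\rightharpoonup u^*$. The Hausdorff complementary convergence ensures that every compact $K\subset\Omega$ lies in $\Omega_n$ for large $n$, and Theorem \ref{teo1} gives $-\Delta_{p(x)}u^*=f$ in $\Omega$. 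By Remark \ref{loquefalta}, the remaining and delicate task is to prove $u^*\in W^{1,p(x)}_0(\Omega)$, which by Theorem \ref{teocaracterizacion} amounts to showing $\tilde u^*=0$ $p(x)-$q.e.\ on $D\setminus\Omega$.

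The core idea for this pointwise vanishing is to combine Mazur's lemma with the characterization of $W^{1,p(x)}_0(\Omega_n)$ in terms of quasi-continuity. Mazur's lemma applied to the weakly convergent sequence produces convex combinations $v_N=\sum_{n=N}^{M_N}\lambda_n^N u_n$, with $\lambda_n^N\ge 0$ and $\sum_n\lambda_n^N=1$, such that $v_N\to u^*$ strongly in $W^{1,p(x)}_0(D)$. Since each $u_n\in W^{1,p(x)}_0(\Omega_n)$, Theorem \ref{teocaracterizacion} gives $\tilde u_n=0$ $p(x)-$q.e.\ on $D\setminus\Omega_n$, and therefore $\tilde v_N=0$ $p(x)-$q.e.\ on $\bigcap_{n=N}^{M_N}(D\setminus\Omega_n)\supset D\setminus\bigcup_{n\ge N}\Omega_n$. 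By Proposition \ref{propctp}, a further subsequence $\{v_{N_k}\}$ satisfies $\tilde v_{N_k}\to\tilde u^*$ $p(x)-$q.e.\ in $D$. For any $x\in D\setminus\limsup_n\Omega_n$ lying off an exceptional null-capacity set, there exists $N_0$ with $x\in D\setminus\bigcup_{n\ge N_0}\Omega_n$, so $\tilde v_{N_k}(x)=0$ eventually and hence $\tilde u^*(x)=0$. This yields $\tilde u^*=0$ $p(x)-$q.e.\ on $D\setminus\limsup_n\Omega_n$.

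The remaining portion of $D\setminus\Omega$ is controlled by the capacity hypothesis. Extracting a subsequence so that $\cp_{p(x)}(\Omega_n\setminus\Omega,D)\le 2^{-n}$, countable subadditivity of the $p(x)-$capacity gives $\cp_{p(x)}(\limsup_n(\Omega_n\setminus\Omega),D)=0$. Combined with the set inclusion
\[
D\setminus\Omega\subset(D\setminus\limsup_n\Omega_n)\cup\limsup_n(\Omega_n\setminus\Omega),
\]
this yields $\tilde u^*=0$ $p(x)-$q.e.\ on $D\setminus\Omega$. Theorem \ref{teocaracterizacion} then gives $u^*\in W^{1,p(x)}_0(\Omega)$, and together with $-\Delta_{p(x)}u^*=f$ and uniqueness of solutions we conclude $u^*=u_\Omega^f$. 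Since the limit is independent of the chosen subsequence, the entire sequence $u_{\Omega_n}^f$ converges weakly to $u_\Omega^f$ in $W^{1,p(x)}_0(D)$. The main obstacle is the middle step: carefully weaving together Mazur's strong approximation, the pointwise $p(x)-$q.e.\ convergence of quasi-continuous representatives furnished by Proposition \ref{propctp}, and the set-theoretic decomposition of $D\setminus\Omega$ into a ``good'' piece handled by quasi-continuity and a ``bad'' piece absorbed by the capacity hypothesis.
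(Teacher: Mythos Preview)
Your proof is correct and follows essentially the same approach as the paper: Mazur's lemma to upgrade weak to strong convergence, Proposition \ref{propctp} for $p(x)$-q.e.\ pointwise convergence of quasi-continuous representatives, and the decomposition of $D\setminus\Omega$ into the complement of $\limsup_n\Omega_n$ (handled by quasi-continuity) and a remainder of zero capacity (handled by subadditivity after extracting a summable subsequence). The paper writes $E=\bigcap_j\bigcup_{n\ge j}\Omega_n$ in place of your $\limsup_n\Omega_n$ and phrases the decomposition as $\Omega^c\setminus E^c=E\setminus\Omega$, but the argument is identical; your explicit final remark that the uniqueness of the limit passes the convergence from subsequences to the full sequence is a small clarification the paper leaves implicit.
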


\begin{proof}
As before, we denote $u_n = u_{\Omega_n}^f$. By Corollary \ref{corocontinuidad}, $\{u_n\}_{n\in\N}$ is bounded in $W^{1,p(x)}_{0}(D)$, therefore, we can assume that $u_{n}\rightharpoonup u^{*}$ weakly in $W^{1,p(x)}_{0}(D)$.

By Theorem \ref{teo1} and Remark \ref{loquefalta} the proof will be finished if we can prove that $u^{*}\in W^{1,p(x)}_{0}(\Omega)$. By Theorem \ref{teocaracterizacion}, it is enough to prove that  $\tilde{u^{*}}=0$ $p(x)-$q.e. in $\Omega^{c}$. 

Consider $\tilde{\Omega}_{j}=\displaystyle{\cup_{n\geq j}}\Omega_{n}$ and $E=\displaystyle{\cap_{j\geq 1}}\tilde{\Omega}_{j}$.

Since  $u_{n}\rightharpoonup u^{*}$ in $W^{1,p(x)}_{0}(D)$, by Mazur's Lemma (see for instance \cite{E-T}), there is a sequence $v_{j}=\sum^{k_{j}}_{n=j}a_{n_{j}}u_{n}$ such that $a_{n_{j}}\geq 0$, $\sum^{k_{j}}_{n=j}a_{n_{j}}=1$ and $v_{j}\rightarrow u^{*}$ in $W^{1,p(x)}_{0}(D)$.

Since $u_{n}\in W^{1,p(x)}_{0}(\Omega_{n})$, by Theorem \ref{teocaracterizacion}, $\tilde{u}_{n}=0$ $p(x)-$q.e. in $\Omega_{n}^{c}$. Therefore, $\tilde{v}_{j}=\sum^{k_{j}}_{n=j}a_{n_{j}}\tilde{u}_{n}=0$ $p(x)-$q.e. in $\cap^{k_{j}}_{n=j}\Omega_{n}^{c} \supset \tilde{\Omega}_{j}^{c}$ for every $j\geq 1$. Then, $\tilde{v}_{j}=0$ $p(x)-$q.e. in $\tilde{\Omega}_{j}^{c}$ for every $j\geq 1$. As a consequence, $\tilde{v}_{j}=0$ $p(x)-$q.e. $\cup_{j\geq 1}\tilde{\Omega}_{j}^{c}=E^{c}$.

On the other hand, since $v_{j}\rightarrow u^{*}$ in $W^{1,p(x)}_{0}(D)$, by Proposition \ref{propctp} $\tilde{v}_{j_{k}}\rightarrow \tilde{u^{*}}$ $p(x)-$q.e. Then we conclude that $\tilde{u^{*}}=0$ $p(x)-$q.e. in $E^{c}$.

Since $\cp_{p(x)}(\Omega_{n} \setminus \Omega,D)$ goes to zero, passing to a subsequence, if necessary, we can assume that $\cp_{p(x)}(\Omega_{n} \setminus \Omega,D)\leq \frac{1}{2^{n}}$. Therefore,
\begin{align*}
\cp_{p(x)}(\tilde{\Omega}_{j} \setminus \Omega,D)&=\cp_{p(x)}(\cup_{n\geq j}\Omega_{n} \setminus \Omega,D)\\
& \leq \sum_{n\geq j}\cp_{p(x)}(\Omega_{n} \setminus \Omega,D)\\
& \leq \sum_{n\geq j}\frac{1}{2^{n}}=\frac{1}{2^{j-1}}.
\end{align*}
Since $E\subset \tilde{\Omega}_{j}$, we have that $E \setminus \Omega\subset \tilde{\Omega}_{j} \setminus \Omega$ for every $j\geq 1$ and so,
$$
\cp_{p(x)}(E \setminus \Omega,D)\leq \cp_{p(x)}(\tilde{\Omega}_{j} \setminus \Omega,D)\leq \frac{1}{2^{j-1}} \text{ for every } j\geq 1. 
$$
Taking the limit $j\to\infty$, we have that $\cp_{p(x)}(E \setminus \Omega,D)=\cp_{p(x)}(\Omega^{c} \setminus E^{c},D)=0$. So we can conclude that $\tilde{u^{*}}=0$ $p(x)-$q.e. in $\Omega^{c}$, which completes the proof.
\end{proof}

The next result shows that the continuity of the solutions of the Dirichlet problem for the $p(x)-$laplacian with respect to the domain is independent of the second member $f$.

For constant exponents, this result was obtained in \cite[Lemma 4.1]{Bucur}. The proof that we present here, in the non-constant exponent case, follows closely the one in \cite[Theorem 3.2.5]{Henrot} where the linear case $p(x)\equiv 2$ is studied.

\begin{teo}[Independence with respect to the second member] \label{teoindep} Let $\Omega_{n},\Omega\subset D$ be open sets such that $u_{\Omega_{n}}^{1}\rightarrow u_{\Omega}^{1}$ in $L^{p(x)}(D)$. Then $u_{\Omega_{n}}^{f}\rightharpoonup u_{\Omega}^{f}$ in $W_{0}^{1,p(x)}(D)$ for every $f \in W^{-1,p'(x)}(D)$.
\end{teo}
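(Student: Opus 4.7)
The plan is to reduce the problem to bounded sources $f \in L^\infty(D)$ using the density of $L^\infty$ in $W^{-1,p'(x)}$ (Proposition \ref{propdensidad}) together with the stability estimate of Theorem \ref{estabilidadDirichlet}, and then to identify the weak limit of $\{u_{\Omega_n}^g\}$ with $u_\Omega^g$ by verifying both the PDE (via Theorem \ref{teo1}) and the homogeneous boundary condition (via Theorem \ref{teocaracterizacion} and a Mazur/quasicontinuity argument).

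Given $\ve > 0$ I would first pick $g \in L^\infty(D)$ with $\|f-g\|_{-1,p'(x)} < \ve$. Applying Theorem \ref{estabilidadDirichlet} in each $\Omega_n$ and in $\Omega$ bounds $\|\nabla u_{\Omega_n}^f - \nabla u_{\Omega_n}^g\|_{p(x)}$ and $\|\nabla u_\Omega^f - \nabla u_\Omega^g\|_{p(x)}$ uniformly in $n$ by a quantity that vanishes with $\ve$. A standard $\ve$-approximation for weak convergence then reduces the theorem to proving $u_{\Omega_n}^g \rightharpoonup u_\Omega^g$ weakly in $W^{1,p(x)}_0(D)$ for every $g \in L^\infty(D)$.

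Fix such $g$. By Corollary \ref{corocontinuidad} the family $\{u_{\Omega_n}^g\}$ is uniformly bounded in $W^{1,p(x)}_0(D)$, so along a subsequence $u_{\Omega_n}^g \rightharpoonup u^*$ weakly and $u_{\Omega_n}^g \to u^*$ strongly in $L^{p(x)}(D)$ by compactness. The hypothesis itself upgrades to $u_{\Omega_n}^1 \rightharpoonup u_\Omega^1$ weakly in $W^{1,p(x)}_0(D)$ by uniqueness of weak limits; since $u_\Omega^1$ is strictly positive on $\Omega$ (strong maximum principle) and continuous there (by standard regularity for the $p(x)$-laplacian), one deduces that every compact $K \subset\subset \Omega$ lies in $\Omega_n$ for $n$ sufficiently large, whence Theorem \ref{teo1} yields $-\Delta_{p(x)} u^* = g$ weakly in $\Omega$. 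By uniqueness of the Dirichlet problem, the proof reduces to showing $u^* \in W^{1,p(x)}_0(\Omega)$.

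The main obstacle is this last property, which by Theorem \ref{teocaracterizacion} amounts to $\tilde{u^*} = 0$ $p(x)$-q.e.\ on $\Omega^c$. Mazur's lemma combined with Proposition \ref{propctp}, exactly as in the proof of Theorem \ref{teo2dirichlet}, gives $\tilde{u^*} = 0$ q.e.\ on $E^c$, where $E = \bigcap_{j\ge 1}\bigcup_{n\ge j}\Omega_n$, so one must still establish the vanishing on $E \setminus \Omega$. Setting $M = \|g\|_\infty$, Lemma \ref{ppiomax} and the sign-symmetry $u_{\Omega_n}^{-M} = -u_{\Omega_n}^M$ yield $|u_{\Omega_n}^g| \le u_{\Omega_n}^M$ a.e. In the constant-exponent case the conclusion is immediate from the scaling $u^M = M^{1/(p-1)} u^1$, which is unavailable for the $p(x)$-laplacian; the hard step is therefore to produce, by a direct comparison against a suitable capacity-theoretic auxiliary function that exploits both the strict positivity of $u_\Omega^1$ on $\Omega$ and the strong $L^{p(x)}$-convergence given by the hypothesis, an a.e.\ pointwise bound $|u^*| \le w$ with $w \in W^{1,p(x)}_0(\Omega)$. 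Once such a majorant is available, Lemma \ref{lemainf} delivers $u^* \in W^{1,p(x)}_0(\Omega)$ and the proof is complete.
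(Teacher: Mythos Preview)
Your reduction to $g \in L^\infty(D)$ via density and the stability estimate matches the paper exactly, and you correctly isolate the crux: producing a majorant $w \in W^{1,p(x)}_0(\Omega)$ with $|u^*| \le w$ a.e. But you then stop, calling this ``the hard step'' without carrying it out, so the proposal is incomplete. The paper's missing observation is elementary: although exact scaling fails for variable exponents, for $k>1$ one has $|\nabla(k u_{\Omega_n}^1)|^{p(x)-2}\nabla(k u_{\Omega_n}^1) = k^{p(x)-1}|\nabla u_{\Omega_n}^1|^{p(x)-2}\nabla u_{\Omega_n}^1$ with $k^{p(x)-1} \ge k^{p_--1}$, and the paper uses this to argue that $k u_{\Omega_n}^1$, with $k = M^{1/(p_--1)}$ and $M = \|g\|_\infty$, is a supersolution for the problem with right-hand side $M$. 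The comparison principle (Lemma \ref{ppiomax}) then gives $-k u_{\Omega_n}^1 \le u_{\Omega_n}^g \le k u_{\Omega_n}^1$; passing to the $L^{p(x)}$-limit yields $|u^*| \le k\,u_\Omega^1 \in W^{1,p(x)}_0(\Omega)$, and Lemma \ref{lemainf} finishes. No capacity-theoretic auxiliary function is needed, and your Mazur/quasicontinuity detour through $E = \bigcap_j\bigcup_{n\ge j}\Omega_n$ is superfluous here: the sandwiching bound already forces $u^*\in W^{1,p(x)}_0(\Omega)$ directly.

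A secondary gap: your argument that every compact $K \subset \Omega$ eventually lies in $\Omega_n$ (needed to invoke Theorem \ref{teo1}) via strict positivity of $u_\Omega^1$ and $L^{p(x)}$-convergence is not solid as written---$L^{p(x)}$-convergence only controls the measure of $K \setminus \Omega_n$, not the set itself. The paper does not address this point within the proof of Theorem \ref{teoindep}; in its downstream applications (Theorems \ref{propalfa} and \ref{teosverak}) the Hausdorff convergence $\Omega_n \stackrel{H}{\to} \Omega$ is assumed separately and supplies the compact-inclusion property, after which Theorem \ref{teo1} is invoked alongside the conclusion $u^*\in W^{1,p(x)}_0(\Omega)$ obtained here.
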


\begin{proof}
Let us assume first that $f \in L^{\infty}(D)$. Therefore, there is a constant $M>0$ such that $-M\leq f \leq M$ a.e. We can also assume that $M>1$.

We will name $u_{n}^{f}=u_{\Omega_{n}}^{f}$ and $u^{f}=u_{\Omega}^{f}$.

Given $k>1$, since $u_{n}^{1}$ is the solution of the equation with $f \equiv 1$,
\begin{align*}
\displaystyle{\int_{\Omega}|\nabla (k u_{n}^{1})|^{p(x)-2}\nabla (k u_{n}^{1}) \nabla \varphi \, dx} &= \displaystyle{\int_{\Omega}k^{p(x)-1}|\nabla u_{n}^{1}|^{p(x)-2}\nabla u_{n}^{1} \nabla \varphi \, dx}\\
                                                                                                & \geq  k^{p_--1}\displaystyle{\int_{\Omega}|\nabla u_{n}^{1}|^{p(x)-2}\nabla u_{n}^{1} \nabla \varphi \, dx}\\
                                                                                                 &= k^{p_--1} \displaystyle{\int_{\Omega}\varphi \, dx}.
\end{align*}

Considering $k=M^{\frac{1}{p_--1}}$, we have that $f \leq M=k^{p_--1}\leq -\Delta_{p(x)}(k u_{n}^{1})$, therefore, $u$ is a supersolution. Since we also have that $0=u_{n}^{f}|_{\partial D}\leq k u_{n}^{1}|_{\partial D}=0$, by Proposition \ref{propmonotonia}, we can conclude that $u_{n}^{f}\leq k u_{n}^{1}$.

On the other hand, since $-\Delta_{p(x)}(-k u_{n}^{1})=\Delta_{p(x)}(k u_{n}^{1})\leq -k^{p_--1}=-M\leq f$, we obtain that $-k u_{n}^{1}\leq u_{n}^{f}$.

Therefore 
\begin{equation}\label{indep}
-k u_{n}^{1}\leq u_{n}^{f}\leq k u_{n}^{1}.                                                   
\end{equation}
By Corollary \ref{corocontinuidad}, $\{u_{n}^{f}\}_{n \in \N}$ is bounded in $W^{1,p(x)}_{0}(D)$. Then, by Alaoglu's Theorem, there is a subsequence, which will remaine denoted $\{u_{n}^{f}\}_{n \in \N}$ such that $u_{n}^{f}\rightharpoonup u^{*}$ en $W_{0}^{1,p(x)}(D)$.

Since, by Rellich-Kondrachov's Theorem, we know that $W_{0}^{1,p(x)}(D)$ is compactedly embbeded in $L^{p(x)}(D)$, we have that $u_{n}^{f}\rightarrow u^{*}$ in $L^{p(x)}(D)$.

Then, taking into account the convergence in $L^{p(x)}(D)$ in \eqref{indep}, we have that
$$
-k u^{1}\leq u^{*}\leq k u^{1}.
$$
Therefore, $|u^{*}|\leq k u^{1}$ and, since $u^{1} \in W_{0}^{1,p(x)}(\Omega)$ we can conclude that $u^{*} \in W_{0}^{1,p(x)}(\Omega)$.

Let us assume now that $f \in W^{-1,p'(x)}(D)$. By density, there is a sequence $\{f_{j}\}_{j \in \N} \subset L^{\infty}(D)$ such that $f_{j}\rightarrow f$ in $W^{-1,p'(x)}(D)$. 

Given $\varphi \in W^{-1,p'(x)}(D)$,
$$
\langle \varphi , u_n^f - u^f \rangle = \langle \varphi,  u_n^f - u_n^{f_j} \rangle + \langle \varphi, u_n^{f_j} - u^{f_j}\rangle + \langle \varphi,  u^{f_j} - u^f \rangle.
$$
Now, by Theorem \ref{estabilidadDirichlet}, given $\ve>0$, there exists $j_0\in \N$ such that
$$
\|\nabla u_n^f - \nabla u_n^{f_j}\|_{p(x)}\le \ve \quad \text{and}\quad \|\nabla u^f - \nabla u^{f_j}\|_{p(x)}\le \ve,
$$
uniformly in $n\in \N$ for every $j\ge j_0$. By the first part of the proof, 
$$
\langle \varphi, u_n^{f_j} - u^{f_{j_0}}\rangle\to 0 \quad \text{as } n\to\infty.
$$
This completes the proof.
\end{proof}

\section{Extension of a result of \v{S}ver\'ak.}

In this section, we apply our results to prove the extension of the theorems of \v{S}ver\'ak discussed in the introduction. Our main result being Theorem \ref{teosverak}.

We begin by establishing some capacity estimate from below for compact connected sets. This was obtained for $p(x)\equiv 2$ by \v{S}ver\'ak in \cite{Sverak}. See the book \cite{Henrot} for a proof. For general constant exponents, this estimate was obtained in \cite{Bucur}. Our extension to variable exponents will rely on Bucur and Trebeschi's result \cite{Bucur}. In fact, we use the following proposition.

\begin{prop}[\cite{Bucur}, Lemma 5.2]\label{lemasverakdos}
Let $p>N-1$ be constant and let $K\subset \R^N$ be compact and connected. Assume that there exists a constant $a>0$ such that $2a<\diam K$. Then, for every $x \in K$ and $a\le r< \frac{\diam K}{2}$, we have the following inequality:
$$
\cp_{p}(K\cap \overline{B(x,r)},B(x,2r))\geq c,
$$
for some constant $c>0$ depending only on $p$ and $a$.
\end{prop}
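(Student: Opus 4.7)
The plan is to combine a topological extraction of a substantial continuum inside $K\cap\overline{B(x,r)}$ with a classical lower bound on the $p$-capacity of continua that is valid precisely when $p>N-1$.

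First, I would use connectedness to produce, inside $K\cap\overline{B(x,r)}$, a continuum $C$ containing $x$ with $\diam C\ge r$. Since $\diam K>2r$, the set $A:=K\cap\overline{B(x,r)}$ is a proper closed subset of the compact connected space $K$, and the boundary-bumping theorem (every component of a proper closed subset of a compact connected Hausdorff space meets its topological boundary of that subset) forces the component $C$ of $A$ containing $x$ to touch $\partial B(x,r)$. Thus $C$ is a continuum in $\overline{B(x,r)}$ with $\diam C\ge r\ge a$, and by monotonicity of $\cp_p$ in its first argument it is enough to prove a bound of the form $\cp_p(C,B(x,2r))\ge c(p,N,a)$.

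Second, I would reduce to a normalized setting. Under the dilation $z=(y-x)/r$, the continuum $C$ becomes a continuum $C'\subset\overline{B(0,1)}$ with $\diam C'\ge 1$ and the ambient ball becomes $B(0,2)$. Using the scaling law $\cp_p(\lambda E,\lambda U)=\lambda^{N-p}\cp_p(E,U)$, the problem reduces to the universal bound $\cp_p(C',B(0,2))\ge c_0(p,N)>0$ valid for every continuum $C'\subset\overline{B(0,1)}$ of diameter at least $1$. The scaling factor $r^{N-p}$, evaluated on $r\ge a$, is monotone in $r$ and yields the final constant depending only on $p$, $N$, and $a$.

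The heart of the proof is the universal bound, and this is where the hypothesis $p>N-1$ is essential. My approach would be a slicing argument: since $C'$ is connected with diameter at least $1$, one can choose a coordinate direction $e_i$ along which the projection of $C'$ onto the $i$th axis contains an interval of length at least $1/\sqrt N$. On lines parallel to $e_i$ whose transverse coordinates lie in a suitable set, any admissible test function $u\in W^{1,p}_0(B(0,2))$ with $u\ge 1$ on an open neighborhood of $C'$ must drop from $1$ to $0$ over a one-dimensional segment of length at most $4$, and the one-dimensional H\"older estimate
$$1\le \int|\partial_i u|\,dt\le \bigl(\int|\partial_i u|^p\,dt\bigr)^{1/p}\cdot 4^{(p-1)/p}$$
converts this into the pointwise lower bound $\int|\partial_i u|^p\,dt\ge 4^{1-p}$, which Fubini integrates into a universal bound on $\int|\nabla u|^p\,dx$. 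The main obstacle I foresee is certifying that the family of transverse lines over which the 1D bound is activated carries enough measure: for a 1-dimensional continuum, the Lebesgue projection onto a transverse hyperplane can be a null set, so naive Lebesgue slicing is not sufficient. What saves the argument is that when $p>N-1$, continua of positive diameter carry positive $(N-p)$-dimensional Hausdorff content (with $N-p<1$), and a capacity-based (rather than Lebesgue-based) slicing activates the 1D inequality on a set of transverse coordinates of positive measure. This is exactly the threshold phenomenon used in \cite{Bucur}, and absorbing it into the previous two steps finishes the proof.
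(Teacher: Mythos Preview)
The paper does not give its own proof of this proposition: it is quoted directly from \cite{Bucur}, Lemma 5.2, and used as a black box. There is therefore no argument in the paper to compare yours against.

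That said, your outline (boundary bumping to extract a continuum $C\ni x$ with $\diam C\ge r$, scaling to a normalized continuum $C'\subset\overline{B(0,1)}$ with $\diam C'\ge 1$, then a universal lower bound on $\cp_p(C',B(0,2))$) is the standard route and matches what is done in \cite{Bucur}. Two points deserve flagging. First, in the scaling step you assert that the factor $r^{N-p}$ on $r\ge a$ yields a constant depending only on $p,N,a$; this is correct for $N-1<p\le N$ (where $r^{N-p}\ge a^{N-p}$), but for $p>N$ the factor $r^{N-p}$ is decreasing in $r$ and no uniform lower bound is available without an upper bound on $r$. This is harmless for the paper, which only applies the proposition in the regime $N-1<p_-\le N$ (see Remark \ref{p>N}) and in any case lets the constant in Corollary \ref{propdefinitivo} depend on $\diam D$. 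Second, at the heart of the matter---the universal bound for continua of unit diameter---you correctly identify the obstruction to naive Lebesgue slicing, but your proposed repair is not actually carried out: the sentence invoking ``capacity-based slicing'' via $(N-p)$-Hausdorff content is a restatement of what needs to be proved rather than a proof. A concrete way to close this gap for $N-1<p<N$ is to use that a continuum $C'$ of diameter at least $1$ satisfies $\mathcal{H}^1(C')\ge 1$, hence $\mathcal{H}^{N-p}_\infty(C')$ is bounded below by a dimensional constant (since $N-p<1$), and then invoke the standard comparison $\cp_p(E,B(0,2))\ge c\,\mathcal{H}^{N-p}_\infty(E)$.
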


The next proposition relates the relative capacity of a set for constant exponents with the one with variable exponents.

\begin{prop}\label{cotabis}
Let $p \in \PP^{\log}(D)$. Then,
$$
\cp_{p_{-}}(E,D)\leq C \cp_{p(x)}(E,D)^{\beta},
$$
where$C>0$ depends on $|D|$, $p_+$ and $p_-$ and $\beta>0$ depends on $p_+$ and $p_-$.
\end{prop}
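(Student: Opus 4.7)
The plan is to pick a nearly optimal test function for $\cp_{p(x)}(E,D)$, observe that it is also admissible for the constant exponent capacity $\cp_{p_-}(E,D)$, and control its $p_-$-modular by its $p(x)$-modular via the Lebesgue embedding on bounded domains. I will first establish the estimate for a compact set $K \subset D$, then pass to general $E$ using the monotone limits defining the relative capacity.

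The starting ingredient is the continuous embedding $L^{p(x)}(D) \hookrightarrow L^{p_-}(D)$ with constant depending only on $|D|$: if $\|f\|_{p(x)} \le 1$ then $\rho_{p(x)}(f)\le 1$, and splitting $D$ into $\{|f|\le 1\}$ and $\{|f|>1\}$ (where $|f|^{p_-}\le |f|^{p(x)}$) gives $\rho_{p_-}(f)\le |D|+1$. This promotes at once to $W^{1,p(x)}_0(D)\hookrightarrow W^{1,p_-}_0(D)$, so that any $u\in R_{p(x)}(K,D)$ is admissible for $\cp^{*}_{p_-}(K,D)$.

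Given $\varepsilon>0$, I would choose $u\in R_{p(x)}(K,D)$ with $\rho_{p(x),D}(\nabla u)\le \cp^{*}_{p(x)}(K,D)+\varepsilon$. Applying the Lebesgue embedding above to $|\nabla u|$ and then Remark \ref{minmax} (to pass from the Luxemburg norm back to a modular) yields
$$
\rho_{p_-,D}(\nabla u) \le C_1 \|\nabla u\|_{p(x)}^{p_-} \le C_1 \max\bigl\{\rho_{p(x),D}(\nabla u),\ \rho_{p(x),D}(\nabla u)^{p_-/p_+}\bigr\},
$$
with $C_1=C_1(|D|,p_-)$. Sending $\varepsilon\to 0$ gives
$$
\cp^{*}_{p_-}(K,D) \le C_1 \max\bigl\{\cp^{*}_{p(x)}(K,D),\ \cp^{*}_{p(x)}(K,D)^{p_-/p_+}\bigr\},
$$
which in the small-capacity regime collapses to $\cp^{*}_{p_-}(K,D)\le C\,\cp^{*}_{p(x)}(K,D)^{\beta}$ with $\beta=p_-/p_+$. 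The extension from compact $K$ to open $U$ follows by taking the sup over compact $K\subset U$, and from open $U$ to arbitrary $E$ by taking the inf over open $U\supset E$; both commute with the monotone map $t\mapsto t^{\beta}$.

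The main delicate point will be the passage through Remark \ref{minmax}, which converts a modular bound into a norm bound at the cost of replacing a single power by a maximum of two powers. This forces the exponent $\beta=p_-/p_+<1$ and restricts the clean single-power form of the estimate to the small-capacity regime — precisely the one that is needed in the proof of Theorem \ref{teosverak}, where $\cp_{p(x)}(\Omega_n\setminus\Omega,D)\to 0$ is assumed.
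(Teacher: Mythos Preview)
Your proof is correct and follows essentially the same route as the paper's: both rest on the inequality $\int_D |\nabla u|^{p_-}\,dx \le C\bigl(\int_D |\nabla u|^{p(x)}\,dx\bigr)^{\beta}$, obtained via the embedding $L^{p(x)}(D)\hookrightarrow L^{p_-}(D)$ together with the norm--modular comparison of Remark~\ref{minmax}, and both use that an admissible function for $\cp_{p(x)}$ is also admissible for $\cp_{p_-}$ since $W^{1,p(x)}_0(D)\subset W^{1,p_-}_0(D)$. Your write-up is in fact more scrupulous than the paper's on two points: you carry out the compact-to-open-to-arbitrary extension explicitly (the paper's proof writes an undefined class ``$S_{p(x)}(E,D)$'' and takes the infimum directly), and you flag that the single-power bound with $\beta=p_-/p_+$ is only literally valid in the small-capacity regime, which is exactly the regime used downstream in Corollary~\ref{propdefinitivo} and Theorem~\ref{teosverak}.
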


\begin{proof}
Given $\varphi\in W^{1,p(x)}_0(D)$, by H\"older's inequality and Proposition \ref{propdesigualdades}, we obtain
$$
\int_{D}|\nabla \varphi |^{p_-}\, dx \leq C\left(\int_{D}|\nabla \varphi |^{p(x)}\, dx\right)^{\beta}.
$$
So we conclude
$$
\inf_{\varphi \in S_{p(x)}(E,D)}\int_{D}|\nabla \varphi |^{p_-}\, dx \leq  C \left(\inf_{\varphi \in S_{p(x)}(E,D)}\int_{D}|\nabla \varphi |^{p(x)}\, dx\right)^{\beta}.
$$
On the other hand, since $W^{1,p(x)}_{0}(D)\subset W^{1,p_-}_{0}(D)$,
$$
\inf_{\varphi \in S_{p_-}(E,D)}\int_{D}|\nabla \varphi |^{p_-}\, dx \leq  C \left(\inf_{\varphi \in S_{p(x)}(E,D)}\int_{D}|\nabla \varphi |^{p(x)}\, dx\right)^{\beta}.
$$
We can conclude that $\cp_{p_{-}}(E,D)\leq C \cp_{p(x)}(E,D))^{\beta}$.
\end{proof}

From Proposition \ref{lemasverakdos} and Proposition \ref{cotabis} we obtain the following corollary.
\begin{corol}\label{propdefinitivo}
Given $K \subset D \subset \R^N$ compact and connected and $p \in \PP^{log}(B(x,2r))$ such that $p_{-}\ge N-1$. Then, for every $x \in K$ and $a \leq r<\frac{\diam K}{2}$ for some positive constant $a$,
$$
\cp_{p(x)}(K\cap \overline{B(x,r)},B(x,2r))\geq \kappa,
$$
for some constant $\kappa>0$ depending on $|D|$, $\diam D$, $p_+$ and $p_-$.
\end{corol}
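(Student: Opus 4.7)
The plan is simply to combine the two previously established ingredients: Proposition \ref{lemasverakdos} (the constant-exponent estimate of Bucur--Trebeschi) and Proposition \ref{cotabis} (the comparison between constant and variable exponent capacities). First I apply Proposition \ref{lemasverakdos} with constant exponent $q:=p_{-}$; since $p_{-}\ge N-1$, for every $x\in K$ and every $a\le r<\diam K/2$ we get
\[
\cp_{p_{-}}\bigl(K\cap\overline{B(x,r)},\,B(x,2r)\bigr)\ge c_{0},
\]
where $c_{0}>0$ depends only on $p_{-}$ and $a$.

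Next, I invoke Proposition \ref{cotabis} with the ambient open set taken to be $B(x,2r)$ in place of $D$, applied to $E=K\cap\overline{B(x,r)}$. This yields
\[
\cp_{p_{-}}\bigl(E,\,B(x,2r)\bigr)\le C\,\cp_{p(x)}\bigl(E,\,B(x,2r)\bigr)^{\beta},
\]
with $\beta>0$ depending only on $p_{-},p_{+}$, and $C$ depending on $|B(x,2r)|$, $p_{-}$ and $p_{+}$. Combining the two inequalities gives
\[
c_{0}\le C\,\cp_{p(x)}\bigl(K\cap\overline{B(x,r)},\,B(x,2r)\bigr)^{\beta},
\]
so $\cp_{p(x)}(K\cap\overline{B(x,r)},B(x,2r))\ge (c_{0}/C)^{1/\beta}=:\kappa>0$. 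Finally, to produce the claimed dependence, I observe that the constraint $r<\diam K/2\le \diam D/2$ forces $|B(x,2r)|\le \omega_{N}(\diam D)^{N}$, so $C$ can be absorbed into a constant depending only on $\diam D$ (equivalently $|D|$), $p_{-}$ and $p_{+}$. Hence $\kappa$ has the stated dependence.

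The only point that requires any care is confirming that Proposition \ref{cotabis} is legitimately applicable with the ambient set $B(x,2r)$: its statement is written for a generic bounded open $D$, and inspecting the proof one sees the needed inputs are merely H\"older's inequality (only the measure of the ambient set enters) and Proposition \ref{propdesigualdades}, plus the log-H\"older hypothesis on $p$ restricted to the ambient set, which is exactly the assumption $p\in\PP^{\log}(B(x,2r))$ built into our corollary. Thus no substantive obstacle arises; the work is entirely bookkeeping of constants.
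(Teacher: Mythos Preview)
Your argument is correct and follows exactly the same route as the paper: apply Proposition~\ref{cotabis} with ambient set $B(x,2r)$ to compare $\cp_{p_-}$ and $\cp_{p(x)}$, use the observation $2r<\diam K\le\diam D$ to control $|B(x,2r)|$ in terms of $\diam D$, and then invoke Proposition~\ref{lemasverakdos} with constant exponent $p_-$ for the lower bound. The paper's proof is a two-line sketch of precisely this; your version simply spells out the constant bookkeeping and the legitimacy of applying Proposition~\ref{cotabis} on $B(x,2r)$.
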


\begin{proof}
Just apply Proposition \ref{cotabis} to the sets $K\cap\overline{B(x,r)}$ and $B(x.2r)$, and observe that $2r<\diam K\le \diam D$. Then apply Proposition \ref{lemasverakdos}.
\end{proof}

Now we look for an extension of Theorem \ref{teo2dirichlet} in the sense that instead of requiring some capacity condition on the differences of the approximating domains with the limiting domain, we require a uniform boundary regularity in terms of capacity.

\begin{defi}
We say that $\Omega$ verifies the condition $(p(x), \alpha, r)$ if
$$
\cp_{p(x)}(\Omega^{c}\cap B(x,r),B(x,2r))\geq \alpha,\quad x \in \partial \Omega.
$$ 
Set $\mathcal{O}_{\alpha,r_{0}}(D)=\{\Omega \subset D \text{ open} \colon \Omega \text{ verifies the condition } (p(x), \alpha, r) \text{ for every } 0<r<r_0\}$.
\end{defi}

From now on we will need a result on uniform continuity with respect to $\Omega\in \mathcal{O}_{\alpha, r_0}(D)$ for the solutions of the Dirichlet problem, $u_\Omega^f$ with $f$ sufficiently integrable.

This result for $p(x)\equiv 2$ is classic and can be found, for instance, in \cite[Lemma 3.4.11 and Theorem 3.4.12, p.p. 109]{Henrot}. The key for its proof is to obtain the {\em Wiener conditions}, see \cite{GT}.

The extension for $1<p<N$ constant can be found in the articles \cite{GZ, KM, Mazya}. Consult the book \cite{MZ}, Theorem 4.22. The result for $p(x)$ variable was recently obtained in \cite{Lukkari}.

\begin{lema}[\cite{Lukkari}, Theorem 4.4] \label{lemawiener}
Given $\Omega \in \mathcal{O}_{\alpha,r_{0}}(D)$, $f \in L^{r}(D)$, $r>N$. Then, there are constants $M>0$ and $0<\delta<1$ such that $|u_\Omega^f(x)-u_\Omega^f(y)|\leq M |x-y|^{\delta}$.
\end{lema}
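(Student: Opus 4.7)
The plan is to invoke the Wiener-type criterion for $p(x)$-harmonic functions due to Lukkari, combined with interior regularity, using the uniform capacity condition to ensure all constants are independent of the particular $\Omega\in\mathcal{O}_{\alpha,r_{0}}(D)$.

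First, away from the boundary, I would use interior Hölder regularity for the $p(x)$-Laplacian with source $f\in L^{r}(D)$, $r>N$. By the Caccioppoli inequality on balls $B(x_{0},2\rho)\subset\Omega$ and Morrey-type embeddings adapted to the variable exponent setting, one obtains a local Hölder estimate with constants depending only on $p_{\pm}$, the log-Hölder constant of $p$, $\|f\|_{L^{r}(D)}$, and $|D|$; log-Hölder continuity is what allows one to treat $p$ as essentially constant on small balls after rescaling.

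Second, near the boundary, for $x_{0}\in\partial\Omega$ and $0<\rho<r_{0}$, the uniform capacity condition enters through a Mazya-type Poincaré inequality. Since $u_{\Omega}^{f}\in W^{1,p(x)}_{0}(\Omega)$, the truncations $(u_{\Omega}^{f}-k)^{\pm}$, for levels $k$ comparing respectively to $\sup$ and $\inf$ of $u_{\Omega}^{f}$ on $B(x_{0},2\rho)\cap\Omega$, vanish $p(x)$-quasi everywhere on $\Omega^{c}\cap B(x_{0},\rho)$ by Theorem \ref{teocaracterizacion}. Since $\cp_{p(x)}(\Omega^{c}\cap B(x_{0},\rho),B(x_{0},2\rho))\ge\alpha$ by hypothesis, a Mazya-type inequality with constant depending only on $\alpha$, $p_{\pm}$ and the log-Hölder modulus yields
\begin{equation*}
\int_{B(x_{0},\rho)}|(u_{\Omega}^{f}-k)^{\pm}|^{p(x)}\,dx \le C\,\rho^{p(x)}\int_{B(x_{0},2\rho)}|\nabla (u_{\Omega}^{f}-k)^{\pm}|^{p(x)}\,dx,
\end{equation*}
with $C$ uniform in $\Omega\in\mathcal{O}_{\alpha,r_{0}}(D)$. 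Combined with the Caccioppoli inequality for $(u_{\Omega}^{f}-k)^{\pm}$ and the contribution of the source (controlled using $f\in L^{r}(D)$ with $r>N$ via Wolff-type potential estimates), this produces an oscillation decay of the form
\begin{equation*}
\text{osc}_{B(x_{0},\rho/2)\cap\Omega}\,u_{\Omega}^{f}\le\theta\,\text{osc}_{B(x_{0},2\rho)\cap\Omega}\,u_{\Omega}^{f}+C\rho^{\gamma},
\end{equation*}
with $\theta<1$ and $\gamma>0$ independent of $\Omega\in\mathcal{O}_{\alpha,r_{0}}(D)$.

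Third, iterating this oscillation decay over geometric scales $\rho_{k}=2^{-k}r_{0}$ delivers the desired uniform Hölder estimate $|u_{\Omega}^{f}(x)-u_{\Omega}^{f}(y)|\le M|x-y|^{\delta}$ with $\delta=\log_{2}(1/\theta)$ and $M$ depending only on $\alpha$, $r_{0}$, $\|f\|_{L^{r}(D)}$, $p_{\pm}$, the log-Hölder constant of $p$, and $|D|$. The main obstacle is that all auxiliary inequalities (Caccioppoli, Mazya, Morrey) must be formulated so that their constants depend on $p$ only through $p_{\pm}$ and the log-Hölder modulus, not on the finer structure of $p(x)$; this delicate bookkeeping, together with the potential-theoretic treatment of the inhomogeneity, is precisely what is carried out in \cite{Lukkari}, so at the level of this paper the lemma is invoked directly.
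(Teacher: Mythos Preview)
Your proposal is correct and aligns with the paper's treatment: the paper does not prove this lemma at all but simply cites it as Theorem 4.4 of \cite{Lukkari}, exactly as you conclude in your final sentence. Your additional sketch of the Wiener-type argument (interior regularity, Maz'ya--Poincar\'e near the boundary via the uniform capacity bound, oscillation decay and iteration) is a reasonable outline of what the cited reference does, but it goes beyond what the paper itself provides.
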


With this result we are able to prove the analogous of Theorem \ref{teo2dirichlet} for domains in $\mathcal O_{\alpha, r}$.
\begin{teo} \label{propalfa}
Given $\{\Omega_{n}\}_{n \in \N} \subset \mathcal{O}_{\alpha,r_{0}}(D)$ such that $\Omega_{n}\stackrel{H}{\rightarrow}\Omega$. Then, $u^{f}_{\Omega_{n}}\rightharpoonup u^{f}_{\Omega}$ in $W_{0}^{1,p(x)}(D)$.
\end{teo}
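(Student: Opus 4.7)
The plan is to reduce immediately to the case $f\equiv 1$ by invoking Theorem \ref{teoindep}: it suffices to prove that $u_{\Omega_n}^{1}\to u_{\Omega}^{1}$ strongly in $L^{p(x)}(D)$, and then the weak convergence in $W^{1,p(x)}_0(D)$ for arbitrary $f\in W^{-1,p'(x)}(D)$ follows automatically. This reduction is crucial because for $f\equiv 1$ we have $f\in L^{\infty}(D)\subset L^{r}(D)$ for every $r>N$ (as $D$ is bounded), which is precisely the integrability hypothesis needed to apply Lemma \ref{lemawiener}.

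Set $u_n:=u_{\Omega_n}^{1}$, extended by zero to $D$. First I would use Corollary \ref{corocontinuidad} to obtain that $\{u_n\}$ is bounded in $W^{1,p(x)}_0(D)$, so up to a subsequence $u_n\rightharpoonup u^{*}$ weakly in $W^{1,p(x)}_0(D)$ and, by Rellich--Kondrachov, strongly in $L^{p(x)}(D)$. Next, since every $\Omega_n\in\mathcal{O}_{\alpha,r_0}(D)$, Lemma \ref{lemawiener} yields uniform constants $M>0$ and $\delta\in(0,1)$ such that $|u_n(x)-u_n(y)|\le M|x-y|^{\delta}$ independently of $n$. Together with the uniform bound $\|u_n\|_{\infty}\le C$ (which also follows from Lemma \ref{lemawiener} applied to a fixed reference point), Arzel\`a--Ascoli provides a further subsequence converging uniformly on $\overline{D}$ to a H\"older continuous limit, which must coincide with $u^{*}$.

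The Hausdorff complementary convergence $\Omega_n\stackrel{H}{\to}\Omega$ guarantees that for every compact $K\subset\Omega$ one has $K\subset\Omega_n$ for $n$ large enough. Therefore Theorem \ref{teo1} applies and gives $-\Delta_{p(x)}u^{*}=1$ in $\Omega$. As pointed out in Remark \ref{loquefalta}, to identify $u^{*}$ with $u_{\Omega}^{1}$ it only remains to check that $u^{*}\in W^{1,p(x)}_0(\Omega)$. Here I would use the uniform convergence and pointwise vanishing: given $x\in D\setminus\Omega$, the definition of $d^{H}$ produces a sequence $x_n\in D\setminus\Omega_n$ with $x_n\to x$. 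Since $u_n\equiv 0$ on $D\setminus\Omega_n$ and $u_n\to u^{*}$ uniformly, continuity of $u^{*}$ forces $u^{*}(x)=0$. Hence $u^{*}$ vanishes pointwise on $D\setminus\Omega$; being continuous, $u^{*}$ is its own $p(x)$-quasi continuous representative, so $\widetilde{u^{*}}=0$ everywhere (and in particular $p(x)$-q.e.) in $D\setminus\Omega$. Theorem \ref{teocaracterizacion} then gives $u^{*}\in W^{1,p(x)}_0(\Omega)$, whence $u^{*}=u_{\Omega}^{1}$ by uniqueness of the Dirichlet problem. Since every weakly convergent subsequence has the same limit, the full sequence converges, and the proof is complete after invoking Theorem \ref{teoindep}.

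The main obstacle is the passage to the limit of the boundary condition, i.e.\ showing $u^{*}\in W^{1,p(x)}_0(\Omega)$. Without the hypothesis $\Omega_n\in\mathcal{O}_{\alpha,r_0}(D)$, weak $W^{1,p(x)}_0$ convergence and Hausdorff complementary convergence alone are insufficient (this is precisely where the Cioranescu--Murat type counterexamples live); the uniform H\"older regularity supplied by Lemma \ref{lemawiener} is exactly what allows us to transfer the pointwise zero values of $u_n$ on $D\setminus\Omega_n$ to pointwise zero values of $u^{*}$ on $D\setminus\Omega$ via the limiting procedure $x_n\to x$.
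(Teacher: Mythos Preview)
Your proof is correct and follows essentially the same approach as the paper's: reduce to $f\equiv 1$ via Theorem \ref{teoindep}, use Lemma \ref{lemawiener} to upgrade weak $W^{1,p(x)}_0$ convergence to uniform convergence via Arzel\`a--Ascoli, and then pass the boundary condition to the limit pointwise using the sequence $x_n\in D\setminus\Omega_n$ with $x_n\to x$, concluding via Theorem \ref{teocaracterizacion}. The only cosmetic difference is in obtaining the uniform $L^\infty$ bound: the paper compares $u_n$ with $u^1_D$ via Proposition \ref{propmonotonia} and bounds the latter, whereas you apply the H\"older estimate directly to $u_n$ using a reference boundary point of $\Omega_n$; both arguments are valid and yield the same conclusion.
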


\begin{proof}
By Theorem \ref{teoindep}, we can assume that $f=1$ and $u^{1}_{\Omega_{n}}\rightharpoonup u^{*}$ in $W_{0}^{1,p(x)}(D)$.

In order to see that $u^{*}=u^{1}_{\Omega}$, by Theorem \ref{teo1}, it is enough to verify that $u^{*} \in W_{0}^{1,p(x)}(\Omega)$. By Theorem \ref{teocaracterizacion}, it is enough to prove that $\tilde{u^{*}}=0$ $p(x)-$q.e. in $\Omega^{c}$.

As a direct consecuence of Lemma \ref{ppiomax}, $u^{1}_{D}\geq 0$ and $u^{1}_{\Omega_{n}}\geq 0$.

By Lemma \ref{lemawiener}, given $y \in \partial D$, for every $x \notin \Omega$ we have
$$
u^{1}_{D}(x)=|u^{1}_{D}(x)-u^{1}_{D}(y)|\leq M |x-y|^{\delta}\leq M (\diam D)^{\delta}.
$$
By Lemma \ref{ppiomax}, $0\leq u^{1}_{\Omega_{n}}\leq u^{1}_{D}\leq M (\diam D)^{\delta}$. Therefore, $\{u_n\}_{n\in\N}$ is uniformly bounded.

By Lemma \ref{lemawiener}, $\{u_n\}_{n\in\N}$ is uniformly equicontinuous. Therefore, $\{u_n\}_{n\in\N}$ converges uniformily to $u^{*}$.

Given $x \in \Omega^{c}$, since $\Omega_{n}\stackrel{H}{\rightarrow}\Omega$, there is a sequence $x_{n} \in \Omega_{n}^{c}$ such that $x_{n}$ converges to $x$. By uniform convergence, we have that $u_{n}(x_{n})$ converges to $u^{*}(x)$. Since $\sop u_{n}\subset \bar{\Omega}_{n}$, we obtain that $u_{n}(x_{n})=0$ for every $n$ and, therefore, $u^{*}(x)=0$, which completes the proof. 
\end{proof}

\begin{remark}\label{p>N}
If $p_->N$, the same proof can be applied. It is enough to observe that, by Morrey's estimates, $W^{1,p(x)}_0(D)\subset W^{1,p_-}_0(D)\subset C^\alpha(D)$ with $\alpha = 1-N/p_-$.
\end{remark}

Having presented the previous results, the proof of the extension is similar to the one given by \v{S}ver\'ak for $p=2$. We include it for the reader's convenience.

\begin{defi}
Given $l \in \N$ y $\Omega \subset D$, set $\#\Omega$ the number of connected components of $D\setminus \Omega$. 

Set $\mathcal{O}_{l}(D)=\{\Omega \subset\ D \text{ open} \colon \#\Omega\leq l \}$.
\end{defi}

\begin{teo} \label{teosverak}
Given $p \in \PP^{log}(D)$ such that $N-1<p_-$ and $\{\Omega_{n}\}_{n \in \N}\subset \mathcal{O}_{l}(D)$ such that $\Omega_{n}\stackrel{H}{\rightarrow}\Omega$. Then $u^{f}_{\Omega_{n}}\rightharpoonup u^{f}_{\Omega}$ in $W_{0}^{1,p(x)}(D)$.
\end{teo}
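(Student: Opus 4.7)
The plan is to follow the strategy of Šverák \cite{Sverak} and Bucur--Trebeschi \cite{Bucur} and adapt it to the variable-exponent setting using the machinery already established. By Theorem \ref{teoindep}, it suffices to prove $u^{1}_{\Omega_n}\to u^{1}_\Omega$ in $L^{p(x)}(D)$ for the source $f\equiv 1$; the conclusion for arbitrary $f\in W^{-1,p'(x)}(D)$ then follows. Setting $u_n:=u^1_{\Omega_n}$, Corollary \ref{corocontinuidad} produces a weakly convergent subsequence $u_n\rightharpoonup u^*$ in $W^{1,p(x)}_0(D)$, and Theorem \ref{teo1} shows that $-\Delta_{p(x)}u^*=1$ in $\Omega$. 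By uniqueness and Theorem \ref{teocaracterizacion}, identifying $u^*$ with $u^1_\Omega$ reduces to proving $\tilde u^*=0$ $p(x)$-q.e.\ on $\Omega^c$, which I will achieve by recognising $u^*$ as the Dirichlet solution on a set that differs from $\Omega$ by a $p(x)$-null set.

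To construct that intermediate domain, I would use the hypothesis $\#\Omega_n\le l$. Label the connected components of $D\setminus\Omega_n$ as $C_n^1,\ldots,C_n^l$ (padding with empty sets when necessary) and apply Blaschke's selection theorem $l$ times to extract a subsequence along which $C_n^i\stackrel{H}{\to}C^i$ for each $i$, with $C^i\subset D$ compact and connected (possibly empty or a single point). Standard properties of the Hausdorff complementary topology then yield $D\setminus\Omega=\bigcup_{i=1}^{l}C^i$. Split $\{1,\ldots,l\}=I^+\cup I^0$ according to whether $\diam C^i>0$ or $C^i$ is at most one point, and define
\[
\hat\Omega_n:=D\setminus\bigcup_{i\in I^+}C_n^i,\qquad \hat\Omega:=D\setminus\bigcup_{i\in I^+}C^i.
\]

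The key point is that $\{\hat\Omega_n\}$ sits uniformly in a class $\mathcal O_{\alpha,r_0}(D)$. Indeed, for every $i\in I^+$ one has $\diam C_n^i\ge 2a>0$ eventually (since the diameter is continuous under Hausdorff convergence), and Corollary \ref{propdefinitivo} then delivers
\[
\cp_{p(x)}\bigl(C_n^i\cap\overline{B(x,r)},\,B(x,2r)\bigr)\ge\kappa,\qquad x\in C_n^i,\quad a\le r\le r_0,
\]
with $r_0$ depending only on $D$ and on $\min_{i\in I^+}\diam C^i$. Since $\partial\hat\Omega_n\subset\bigcup_{i\in I^+}C_n^i$, the $(p(x),\kappa,r_0)$ condition holds uniformly in $n$, and $\hat\Omega_n\stackrel{H}{\to}\hat\Omega$ by construction, so Theorem \ref{propalfa} yields $u^1_{\hat\Omega_n}\rightharpoonup u^1_{\hat\Omega}$ weakly in $W^{1,p(x)}_0(D)$.

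It remains to bridge $\Omega_n$ with $\hat\Omega_n$ and $\hat\Omega$ with $\Omega$. On the limit side, $\hat\Omega\setminus\Omega$ consists of at most $\#I^0$ singletons, each of zero $p(x)$-capacity in the subcritical regime $p(x)<N$ (the supercritical case $p_->N$ is covered by the Morrey continuity argument of Remark \ref{p>N}); together with Theorem \ref{teocaracterizacion} this forces $u^1_{\hat\Omega}\in W^{1,p(x)}_0(\Omega)$, so uniqueness gives $u^1_{\hat\Omega}=u^1_\Omega$. On the sequence side, the thin components $C_n^i$, $i\in I^0$, have $\diam C_n^i\to 0$, and a standard ball-envelope estimate forces $\cp_{p(x)}(\hat\Omega_n\setminus\Omega_n,D)\to 0$, so Theorem \ref{teo2dirichlet} makes $u^1_{\Omega_n}$ and $u^1_{\hat\Omega_n}$ converge to the same weak limit. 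The main obstacle is precisely this last step: to control the capacity of a shrinking connected set one needs $p(x)<N$ near the point to which it collapses, so the argument must either invoke log-H\"older continuity of $p$ to localise the exponent below $N$, or split the analysis according to whether $p_-<N$ or $p_->N$ and use Remark \ref{p>N} in the latter case. This dichotomy is exactly what the threshold $p_->N-1$ is designed to resolve, in parallel with Šverák's original observation that curves in $\R^2$ carry positive $2$-capacity.
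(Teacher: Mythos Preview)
Your overall strategy mirrors the paper's, but there is a genuine gap in how you handle the ``thin'' components, and a secondary difference in how you bridge $\Omega_n$ with the modified domains.

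The gap is in your dichotomy $I^+/I^0$. You discard every component whose Hausdorff limit is a singleton, and then argue that singletons have zero $p(x)$-capacity ``in the subcritical regime $p(x)<N$'', with the case $p_->N$ handled globally by Remark~\ref{p>N}. But the hypothesis is only $p_->N-1$, so one may perfectly well have $p_-\le N<p_+$: then some singleton $C^i=\{x_i\}$ can sit at a point with $p(x_i)>N$, and $\{x_i\}$ has \emph{positive} $p(x)$-capacity. In that situation $\hat\Omega\setminus\Omega$ is not $p(x)$-null, so $u^1_{\hat\Omega}\neq u^1_\Omega$, and your bridging on the limit side fails. Neither of your proposed fixes works: log-H\"older continuity of $p$ does not force $p<N$ near $x_i$, and the global split $p_-<N$ versus $p_->N$ simply does not cover the mixed range. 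The paper resolves this by refining the split: the discarded index set is $J_1=\{j\colon F^j=\{x_j\}\text{ and }p(x_j)\le N\}$, while singletons with $p(x_j)>N$ are \emph{kept} in $I$. For those, the lower capacity bound required by the $(p(x),\alpha,r)$ condition is supplied directly by the positive capacity of the point (since $p>N$ in a neighbourhood, by continuity), so $\Omega_n^*$ still lies in $\mathcal O_{\alpha,r_0}$ and Theorem~\ref{propalfa} applies.

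On the bridging step, your appeal to Theorem~\ref{teo2dirichlet} to identify the weak limits of $u^1_{\Omega_n}$ and $u^1_{\hat\Omega_n}$ is not a direct application: that theorem compares a sequence with a fixed limit domain, not two sequences with each other, and neither $\Omega_n\stackrel{H}{\to}\hat\Omega$ nor $\hat\Omega_n\stackrel{H}{\to}\Omega$ need hold. The paper sidesteps this entirely via comparison: since $\Omega_n\subset\Omega_n^*$ and $f\equiv 1\ge 0$, Proposition~\ref{propmonotonia} gives $0\le u^1_{\Omega_n}\le u^1_{\Omega_n^*}$; passing to the limit yields $0\le u^*\le u^1_\Omega$, and Lemma~\ref{lemainf} immediately gives $u^*\in W^{1,p(x)}_0(\Omega)$. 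No capacity estimate for the shrinking components is needed at all.
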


\begin{proof}
By Remark \ref{p>N}, we only have to consider the case $N-1<p_-\le N$.

By Theorem \ref{teoindep}, we can assume that $f=1$ and $u_{n}=u^{1}_{\Omega_{n}}\rightharpoonup u^{*}$ en $W_{0}^{1,p(x)}(D)$. 

In order to see that $u^{*}=u^{1}_{\Omega}$, by Theorem \ref{teo1}, it is sufficient to verify that $u^{*} \in W_{0}^{1,p(x)}(\Omega)$.

Set $\bar{D} \setminus \Omega_{n}=F_{n}=F_{n}^{1}\cup F_{n}^{2}\cup...\cup F_{n}^{l}$ where each $F_{n}^{i}$ is compact and connected. Assume that $F_{n}^{j}\stackrel{H}{\rightarrow}F^{j}$ for every $1\leq j\leq l$.

Let us analyze each of the three possibilities. We will find that it is posible to disregard the first two.

(1) If $F^{j}=\emptyset$, then $F_{n}^{j}=\emptyset$ for every $n\geq n_{0}$. Set $J_{0}=\{j\in \{1,\dots,l\} \colon F_{n}^{j}=\emptyset \text{ for j large}\}$.

(2) If $F^{j}=\{x_{j}\}$, set $J_{1}=\{j\in \{1,\dots,l\} \colon F^{j}=\{x_j\} \text{ and } p(x_j)\leq N\}$. Now consider the set $\Omega^{*}=\Omega \setminus \cup_{i \in J_1}\{x_i\}$. Since $\cp_{p(x)}(\{x_i\},D)=0$, we have that $\cp_{p(x)}(\Omega^*,D)=\cp_{p(x)}(\Omega,D)$. Then, by Theorem \ref{teocaracterizacion}, $W_0^{1,p(x)}(\Omega^*)=W_0^{1,p(x)}(\Omega)$. It is enough therefore to verify that $u^* \in W_0^{1,p(x)}(\Omega^*)$.

Set $I=\{1,\dots,l\} \setminus (J_0\cup J_1)$ and consider $\Omega_n^*=D \setminus \cup_{j \in I}F_n^j\stackrel{H}{\rightarrow}\Omega^{*}$.

(3) If, for $j \in I$, $F^{j}$ contains al least two points. Let $a_{j}$ be the distance between them. These points are limits of points from $ F_{n}^{j}$ which we may assume to have a distance at least of $\frac{a_j}{2}$ between them for $n$ large enough.

Given $x \in \partial \Omega_{n}^{*}$ and $j=j(x) \in I$ such that $x \in F_{n}^{j}$, by Corollary \ref{propdefinitivo}, if $a\leq r<\frac{a_{j}}{4}$ for some positive constant $a$, then there is a universal constant $\kappa$ that verifies the following inequality:
$$
\cp_{p(x)}((\Omega_{n}^{*})^{c}\cap \overline{B(x,r)},B(x,2r))\geq \cp_{p(x)}(F_{n}^{j}\cap \overline{B(x,r)},B(x,2r))\geq \kappa>0.
$$
This shows that the open sets $\Omega_{n}^{*}$ belong to $\mathcal{O}_{\alpha,r_{0}}$ with $\alpha=\kappa$ and $r_{0}=\frac{1}{4}\min\{a_{j}\colon j \in I\}$.

Since $\Omega_{n}^{*}\stackrel{H}{\rightarrow}\Omega$, by Theorem \ref{propalfa}, we have that $u^{1}_{\Omega_{n}^{*}}\rightharpoonup u^{1}_{\Omega}$ in $W_{0}^{1,p(x)}(D)$.

On the other hand, since $\Omega_{n}\subset \Omega_{n}^{*}$, by a direct consequence of Lemma \ref{ppiomax} and Proposition \ref{propmonotonia}, we have that $0\leq u^{1}_{\Omega_{n}}\leq u^{1}_{\Omega_{n}^{*}}$. Passing to the limit $n\to\infty$, $0\leq u^{*}\leq u^{1}_{\Omega}$. We conclude then, by Lemma \ref{lemainf}, that $u^{*} \in W_{0}^{1,p(x)}(\Omega)$

If $F^{j}$ contains exactly one point $x_0$, then $p(x_0)>N$ and so $\{x_0\}$ has positive $p(x)-$capacity, the bound from below will be its capacity, which completes the proof.
\end{proof}

\section*{Acknowledgements}
This work was partially supported by Universidad de Buenos Aires under grant UBACYT 20020100100400 and by CONICET (Argentina) PIP 5478/1438.

\def\ocirc#1{\ifmmode\setbox0=\hbox{$#1$}\dimen0=\ht0 \advance\dimen0
  by1pt\rlap{\hbox to\wd0{\hss\raise\dimen0
  \hbox{\hskip.2em$\scriptscriptstyle\circ$}\hss}}#1\else {\accent"17 #1}\fi}
  \def\ocirc#1{\ifmmode\setbox0=\hbox{$#1$}\dimen0=\ht0 \advance\dimen0
  by1pt\rlap{\hbox to\wd0{\hss\raise\dimen0
  \hbox{\hskip.2em$\scriptscriptstyle\circ$}\hss}}#1\else {\accent"17 #1}\fi}
\providecommand{\bysame}{\leavevmode\hbox to3em{\hrulefill}\thinspace}
\providecommand{\MR}{\relax\ifhmode\unskip\space\fi MR }
\providecommand{\MRhref}[2]{%
  \href{http://www.ams.org/mathscinet-getitem?mr=#1}{#2}
}
\providecommand{\href}[2]{#2}
\bibliographystyle{plain}
\bibliography{biblio}

\end{document}